\definecolor{Red}{rgb}{0.7,0,0.1}
\definecolor{Green}{rgb}{0,0.7,0}
\def\url@leostyle{%
 \@ifundefined{selectfont}{\def\UrlFont{\sf}}{\def\UrlFont{\scriptsize\ttfamily}}} \makeatother\urlstyle{leo}
\newtheorem{theorem}{Theorem}
\newtheorem{proposition}[theorem]{Proposition}
\newtheorem{lemma}[theorem]{Lemma}
\newtheorem{corollary}[theorem]{Corollary}
\theoremstyle{definition}
\newtheorem{definition}[theorem]{Definition}
\theoremstyle{remark}
\newtheorem{remark}[theorem]{Remark}
\numberwithin{equation}{section}
\numberwithin{theorem}{section}
\def\cA{\mathcal{A}}
\def\cB{\mathcal{B}}
\def\cC{\mathcal{C}}
\def\cD{\mathcal{D}}
\def\cI{\mathcal{I}}
\def\cJ{\mathcal{J}}
\def\cR{\mathcal{R}}
\def\cT{\mathcal{T}}
\def\cZ{\mathcal{Z}}
\def\bC{\mathbb{C}}
\def\bN{\mathbb{N}}
\def\bR{\mathbb{R}}
\newcommand{\1}{\mathbbm{1}}                     
\newcommand{\spm}{\scriptscriptstyle}
\title{Asymptotic Criticality of the Navier-Stokes Regularity Problem\footnote{to appear in \emph{JMFM}}}
\author{
Zoran Gruji\'c  \small  \url{zgrujic@uab.edu}
\and 
 Liaosha Xu \small  \url{liaosha.xu@ucr.edu}
}
\begin{document}

\maketitle

\begin{abstract}

\footnotesize{The problem of global-in-time regularity for the 3D Navier-Stokes equations, i.e., the question of whether a smooth flow can exhibit spontaneous formation of singularities, is a fundamental open problem in mathematical physics. Due to the super-criticality of the equations, the problem has been super-critical in the sense that there has been a `scaling gap' between any regularity criterion and the corresponding \emph{a priori} bound (regardless of the functional setup utilized). The purpose of this work is to present a mathematical framework--based on a suitably defined `scale of sparseness' of the super-level sets of the positive and negative parts of the components of the higher-order spatial derivatives of the velocity field--in which the scaling gap between the regularity class and the corresponding \emph{a priori} bound vanishes as the order of the derivative goes to infinity.}
\end{abstract}

\vspace{.3in}

\section{Introduction}

3D Navier-Stokes equations (NSE) -- describing a flow of 3D incompressible, viscous, Newtonian fluid -- read
\begin{align*}
 u_t+(u\cdot \nabla)u=-\nabla p + \Delta u,
\end{align*}
supplemented with the incompressibility condition $\, \mbox{div} \, u = 0$, where $u$ is the velocity of the fluid and $p$ is the pressure (here, the viscosity is set to 1 and the external force to zero).
Taking the curl yields the vorticity formulation,
\begin{align*}
\omega_t+(u\cdot \nabla)\omega= (\omega \cdot \nabla)u + \Delta \omega
\end{align*}
where $\omega = \, \mbox{curl} \, u$ is the vorticity of the fluid. The LHS is the transport of the vorticity
by the velocity, the first term on the RHS is the vortex-stretching term, and the second one the diffusion.

There is a unique scaling that leaves the NSE invariant. Let $\lambda > 0$ be a scaling parameter;
it is transparent that if $u=u(x,t) \, \mbox{and} \, p=p(x,t)$ solve the NSE, then
\begin{align*}
u_\lambda(x,t)=\lambda \, u(\lambda x, \lambda^2 t) \ \ \ \mbox{and} \ \ \
p_\lambda(x,t)=\lambda^2 \, p(\lambda x, \lambda^2 t)
\end{align*}
solve the NSE as well (corresponding to the rescaled initial condition, and over the rescaled time
interval).

3D NS regularity problem has been super-critical in the sense that there has been
a `scaling gap' between any known regularity criterion and the corresponding
\emph{a priori} bound. An illustrative example is the classical Ladyzhenskaya-Prodi-Serrin
regularity criterion, $u \in L^p (0,T; L^q)$,
\begin{align*}
\frac{3}{q} + \frac{2}{p} = 1
\end{align*}
\emph{vs.} the corresponding \emph{a priori} bound $u \in L^p (0,T; L^q)$,
\begin{align*}
\frac{3}{q} + \frac{2}{p} = \frac{3}{2}
\end{align*}
(for a suitable range of the parameters). As a matter of fact, all the known regularity criteria are (at best) scaling-invariant, while all the \emph{a priori} bounds had been on the scaling level of the energy bound, $u \in L^\infty (0,T; L^2)$, reflecting the super-criticality of the system \emph{per se}.

\medskip

\emph{Spatial intermittency} of the regions of intense fluid activity has been well-documented in the computational simulations of the 3D NSE. This phenomenon inspired a mathematical study of turbulent dissipation in the 3D NS flows based on the concept of \emph{sparseness at scale} whose local-1D version was introduced in \citet{Grujic2013} -- some key notions are recalled below.
Let $S$ be an open subset of $\bR^d$ and $\mu$ the $d$-dimensional Lebesgue measure.
\begin{definition}\label{def:1DSparse}
For a spatial point $x_0$ and $\delta\in (0,1)$, an open set $S$ is 1D $\delta$-sparse around $x_0$ at scale $r$ if there exists a unit vector $\nu$ such that
\begin{align*}
\frac{\mu\left(S\cap (x_0-r\nu, x_0+r\nu)\right)}{2r} \le \delta\ .
\end{align*}
\end{definition}
The volumetric version is the following.
\vspace{-0.1in}
\begin{definition}\label{def:3DSparse}
For a spatial point $x_0$ and $\delta\in (0,1)$, an open set $S$ is $d$-dimensional $\delta$-sparse around $x_0$ at scale $r$ if
\begin{align*}
\frac{\mu\left(S\cap B_r(x_0)\right)}{\mu(B_r(x_0))} \le \delta\ .
\end{align*}
$S$ is said to be $r$-semi-mixed with ratio $\delta$ if the above inequality holds for every $x_0\in\bR^d$. (It is straightforward to check that for any $S$, $d$-dimensional $\delta$-sparseness at scale $r$ implies 1D $(\delta)^\frac{1}{d}$-sparseness at scale $r$ around any spatial point $x_0$; however the converse is false, i.e. local-1D sparseness is in general a weaker condition.)
\end{definition}

The main idea in this approach is simple. Local-in-time analytic smoothing (in the spatial variables), measured in $L^\infty$, represents a very strong manifestation of the diffusion in the 3D NS system. This provides a suitable environment for the application of the harmonic measure majorization principle -- shortly, if the regions of the intense fluid activity are `sparse enough', the associated harmonic measure will be `small enough' to prevent any further growth of the $L^\infty$-norm and -- in turn -- any singularity formation. Essentially, it suffices that the scale of sparseness of the super-level sets of the field of interest, cut at a fraction of the $L^\infty$-norm, be dominated by a fraction of the scale of the analyticity radius.

In what follows, let us denote the positive and the negative parts of the vectorial components of $f$ by $f_i^\pm$, and compute the norm of a vector $v=(a_1, a_2, a_3, \dots, a_d)$ as $|v| = \max_{1 \le i \le d} \{|a_i|\}$.

\begin{definition}[\citet{Farhat2017} and \citet{Bradshaw2019}]\label{def:VecCompLSet}
For a positive exponent $\alpha$, and a selection of parameters $\lambda$ in $(0,1)$, $\delta$ in $(0,1)$ and $c_0>0$, the class of functions $Z_\alpha(\lambda, \delta; c_0)$ consists of bounded, continuous functions $f : \mathbb{R}^d \to \mathbb{R}^d$ subjected to the following uniformly local condition. For $x_0$ in $\mathbb{R}^d$, select the/a component $f_i^\pm$ such that $f_i^\pm(x_0) = |f(x_0)|$, and require that the set
\begin{align*}
S_i^\pm(x_0):=\biggl\{ x \in \mathbb{R}^d: \, f_i^\pm(x) > \lambda \|f\|_\infty\biggr\}
\end{align*}
be $d$-dimensional $\delta$-sparse around $x_0$ at scale $c \, \frac{1}{\|f\|_\infty^\alpha}$, for some $c$ comparable to $c_0$. Enforce this for all $x_0$ in $\mathbb{R}^d$. Here, $\alpha$ is the scaling parameter, $c_0$ is the size-parameter, and $\lambda$ and $\delta$ are the (interdependent) `tuning parameters'.
\end{definition}

\begin{remark}
On one hand, it is plain that $f \in L^p_w$ implies $f \in Z_\frac{p}{d}$ (here $L^p_w$ denotes the weak Lebesgue space). On the other hand, in the geometrically worst case scenario for sparseness, the super-level set being a single ball, being in $Z_\frac{p}{d}$ is consistent with being in $L^p_w$ (of course, in general, $f \in Z_\frac{p}{d}$ gives no information on the decay of the distribution function of $f$).
\end{remark}

\medskip

Applying this methodology to the vorticity field $\omega$ led to the reduction of the scaling gap within the framework (\citet{Bradshaw2019}), shortly, the class of
\emph{a priori} sparseness is $Z_\frac{2}{5}$ while the regularity class is $Z_\frac{1}{2}$ (this stems from the special structure of the vorticity formulation of the
3D NS system; if one worked with the full gradient the regularity class would be $Z_\frac{3}{5}$ reflecting the standard scaling gap, $Z_\frac{2}{5}$ \emph{vs.}
$Z_\frac{3}{5}$).

\medskip

To illustrate the gain in a bit more tangible way, consider an isolated singularity
of a Leray solution at $(x_0, T)$,
and assume a simple buildup (the super-level sets being approximately balls) of the vorticity singular profile compatible with $\frac{1}{|x-x_0|^\delta}$. Then, the standard $L^p$-theory confines the possible values of $\delta$ to the interval $[2, 3)$, while the $Z_\alpha$-theory confines them to the interval $[2, \frac{5}{2})$, eliminating the $[\frac{5}{2}, 3)$-range.

\medskip

At this point, a natural question arises of whether a further reduction of the scaling gap within the $Z_\alpha$ framework might be possible or whether there might be an obstruction in the way. Before presenting the main result, let us briefly mention two instances of criticality and (a slight) sub-criticality of the NS regularity problem within the framework based on the vorticity as the underlying field of interest.

\medskip

The first concerns a simple geometric scenario in which one arrives at criticality. Namely, suppose that the structure of the vorticity super-level sets is dominated by an ensemble of $O(1)$-long vortex filaments (formation and persistence of  $O(1)$-long filaments has been observed in computational simulations of turbulent flows). Then the \emph{a priori} bound $\omega \in L^\infty(0, T; L^1)$ and Chebyshev's inequality imply that the solution in view is in $Z_\frac{1}{2}$ (in this case, the transversal scale of the filament is comparable to the scale of sparseness)
 (cf. \citet{Grujic2016}).

\medskip

The second concerns a  `non-filamentary' scenario -- more precisely -- a flow initiated at the Kida vortex constrained with the maximal number of symmetries on the periodic cube. A careful computational study of the scale of sparseness in this case was performed in \citet{Rafner2019} revealing that -- within a time interval leading to the peak of the vorticity magnitude -- the solution stabilized in $Z_{\frac{1}{2}+\epsilon}$, explaining the eventual slump as a consequence of turbulent dissipation and revealing a slight sub-criticality of the Kida flow within the framework.

\medskip

In what follows, consider the higher-order spatial fluctuations of the velocity field -- more specifically -- the sequence of functional classes $Z^{(k)}_{\alpha_k}$ defined by the following rule. For a positive, decreasing sequence $\{\alpha_k\}$,
\begin{align*}
 u \in Z^{(k)}_{\alpha_k} \ \ \ \mbox{if} \ \ \ D^{(k)} u \in Z_{\alpha_k}.
\end{align*}
Then, the main result of this paper can be summarized in the following table ($T^*$ denotes a possible singular time).

\begin{table}[h]
{
\begin{center}
\begin{tabular}{ | p{4.5cm} | p{4.5cm} | }
\hline
 ~&~
\\
Regularity class
& A priori bound
\\  ~&~ \\ \hline ~&~
\\   $u(\tau) \in \bigcap_{k \ge k^*} Z^{(k)}_\frac{1}{k+1}$ on a suitable $(T^* - \epsilon, T^*)$, small size-parameters
(uniform in time),  $k^*$ can be taken arbitrary large
&  $u(\tau) \in \bigcap_{k \ge 0} Z^{(k)}_\frac{1}{k+\frac{3}{2}}$  on a suitable $(T^* - \epsilon, T^*)$,
the size-parameters uniform in time
\\ ~ & ~ 
\\\hline
\end{tabular}
\end{center}
}
\end{table}

\noindent (The precise statements are given in Theorem~\ref{th:MainResult} and Theorem~\ref{th:LerayZalpha}, respectively.)

\medskip

\noindent It is informative to present the level-$k$ scales of sparseness realizing the above functional classes.

{
\begin{table}[h!]
{
\begin{center}
\begin{tabular}{ | p{3.9cm} | p{3.7cm} |}
\hline
 ~&~
\\
Regularity class-scale
& A priori bound-scale
\\  ~&~ \\ \hline ~&~
\\  $\frac{1}{C_1(k)} \frac{1}{\|D^{(k)} u\|_\infty^\frac{1}{k+1}}$
&  $C_2(\|u_0\|_2, k) \frac{1}{\|D^{(k)} u\|_\infty^\frac{1}{k+\frac{3}{2}}}$
\\ ~ & ~ 
\\\hline
\end{tabular}
\end{center}
}
\end{table}
}

\newpage

\noindent A closer look at the scaling of dynamic quantities in the table reveals the following.
{
\begin{table}[h!]
{
\begin{center}
\begin{tabular}{ | p{3.9cm} | p{3.7cm} |}
\hline
 ~&~
\\
Regularity class-scale
& A priori bound-scale
\\  ~&~ \\ \hline ~&~
\\  $\frac{1}{\|D^{(k)} u\|_\infty^\frac{1}{k+1}} \approx r$
&  $\frac{1}{\|D^{(k)} u\|_\infty^\frac{1}{k+\frac{3}{2}}} \approx r^\frac{k+1}{k+\frac{3}{2}}$
\\  ~ & ~ 
\\\hline
\end{tabular}
\end{center}
}
\end{table}
}

\noindent Since
\begin{align*}
r^\frac{k+1}{k+\frac{3}{2}}  \to  r, \ \ \  k \to \infty
\end{align*}
and $k^*$ can be taken arbitrary large, we term this phenomenon  \emph{asymptotic criticality}.

\begin{remark}
The 3D NS system features one (known) fundamental cancellation,
\begin{align*}
\int (u \cdot \nabla)u \cdot u \, dx = 0,
\end{align*}
which -- in turn -- implies the \emph{a priori} boundedness of the kinetic energy, i.e., $u \in L^\infty(0,T; L^2)$. This is
away from the level at which the nonlinearity and the diffusion equilibrate -- the scaling-invariant level (e.g., $u \in L^\infty(0,T; L^3)$) -- illustrating the scaling gap. 
In the $Z^{(k)}_{\alpha_k}$ framework, as $k$ increases, the energy bound provides enhanced
sparseness (Theorem~\ref{th:LerayZalpha}) which -- via the harmonic measure majorization principle -- yields the improved bounds on the $L^\infty$-norm of $D^{(k)}u$.
\end{remark}

\medskip

The main results are detailed in Section 3. Here we present a bit of heuristics behind the proof, 
identifying a principal source of the scaling gain.

\medskip

Note that the \emph{a priori} scale of sparseness  (Theorem~\ref{th:LerayZalpha}) \emph{vs.} 
the lower bound on the scale 
of the analyticity radius at level-$k$ (Theorem~\ref{th:MainThmVel}) is
\[
 \|D^{(k)}u\|_\infty^{-\frac{1}{k+\frac{3}{2}}}  \ \ \ \ \ \  vs. \ \ \ \ \ \ \  \|D^{(k)}u\|_\infty^{-\frac{3}{2}\frac{1}{k+\frac{3}{2}}},
\]
and recall that in the general approach based on sparseness of the regions of the intense fluid activity, a possible formation 
of singularities will be prevented as long as the scale of the analyticity radius dominates the scale of sparseness (of the field 
in view; here $D^{(k)} u$). Transparently, the gap here is independent of $k$, reflecting the super-criticality of the system.

\medskip

The scaling gain stems from the 
observation that certain monotonicity properties (in $k$) of the sequence 
$\displaystyle{ \{ \|D^{(k)}u\|_\infty \}_{k=0}^\infty }$, either increasing/ascending
or decreasing/descending, yield a much stronger bound on the analyticity radius, of the order of
\[
  \|D^{(k)}u\|_\infty^{-\frac{1}{k+1}}.
\]
The ascending
scenario is treated in Theorem~\ref{le:AscendDer}, while the descending one in Theorem~\ref{le:DescendDer}.
The utility of the ascending property (which is a more plausible road to a singularity) is
in replacing the classical Gagliardo-Nirenberg interpolation inequalities in estimating the Leibniz
expansion of the nonlinearity, over a suitable range of indices.

\medskip

Once these two key scenarios are well understood, 
the task shifts to deconstructing the local dynamics accordingly. Synchronization of all the moving parts turned out to be intricate, 
and is presented in the proof of the main result (Theorem~\ref{th:MainResult}, including the three
lemmas). The mechanism behind the argument can be thought of as `dynamic interpolation'.

\medskip

In what follows, we will consider the general NS system in $\bR^d$,

\begin{align}
&u_t -\Delta u+ u\cdot\nabla u+\nabla p=f, &&\textrm{ in }\bR^d\times (0,T) \label{eq:NSE1}
\\
&\textrm{div}\ u=0,                                &&\textrm{ in }\bR^d\times (0,T) \label{eq:NSE2}
\\
&u(\cdot,0)=u_0(\cdot),                            &&\textrm{ in }\bR^d\times \{t=0\} \label{eq:NSE3}
\end{align}
where $u$ is the velocity of the fluid, $p$ is the pressure, $f$ is the external force, and $u_0$ is the given initial velocity  (here, the viscosity is set to 1 and the external force $f(\cdot,t)$ is a real-analytic vector field in space).
More precisely, all the
velocity-based results will be set up in $\bR^d$, while all the vorticity-based results will be set up in $\bR^3$.

\section{Spatial analyticity initiated at level $k$}\label{sec:PrfThm}

Since the notion of sparseness is utilized via the harmonic measure maximum principle for subharmonic functions, and the lower bound on the radius of spatial analyticity of solutions plays a key role in its application, the primary purpose of this section is to develop spatial analyticity results for the higher order derivatives.
We start by recalling the results on the spatial analyticity of velocity and vorticity
obtained in \citet{Guberovic2010} and \citet{Bradshaw2019}, respectively, inspired by the method for determining
a lower bound on the uniform radius of spatial analyticity of solutions in $L^p$ spaces introduced in \citet{Grujic1998}.

\begin{theorem}[\citet{Guberovic2010} and \citet{Bradshaw2019}]\label{th:LinftyIVP}
Let the initial datum $u_0\in L^\infty$ (resp. $\omega_0\in L^\infty\cap L^1$). Then, for any $M>1$, there exists a constant $c(M)$ such that there is a unique mild solution $u$ (resp. $\omega$) in $C_w([0,T], L^\infty)$ where $T\ge\frac{1}{c(M)^2\|u_0\|_\infty^2}$ (resp. $T\ge\frac{1}{c(M)\|\omega_0\|_\infty}$), which has an analytic extension $U(t)$ (resp. $W(t)$) to the region
\begin{align*}
\cD_t:=\left\{x+iy\in\bC^3\ :\ |y|\le \sqrt{t}/c(M)\ \left(\textrm{resp. }|y|\le \sqrt{t}/\sqrt{c(M)}\right)\right\}
\end{align*}
for all $t\in[0,T]$, and
\begin{align*}
\sup_{t\le T}\|U(t)\|_{L^\infty(\cD_t)}\le M\|u_0\|_\infty\ \left(\textrm{resp. }\sup_{t\le T}\|W(t)\|_{L^\infty(\cD_t)}\le M\|\omega_0\|_\infty\right).
\end{align*}
\end{theorem}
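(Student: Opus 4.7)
The plan is to follow the complexified mild solution scheme of \citet{Grujic1998} and \citet{Guberovic2010}, adapted to the $L^\infty$ setting: iterate directly in a Banach space of functions that are spatially analytic on the shrinking complex tubes $\cD_t$. First I would rewrite the velocity equation in Duhamel form
\begin{align*}
u(t) = e^{t\Delta}u_0 - \int_0^t e^{(t-s)\Delta}\,\bP\,\mbox{div}(u\otimes u)(s)\,ds,
\end{align*}
and note that the heat kernel $G_t(z)=(4\pi t)^{-d/2}e^{-z^2/(4t)}$ extends entirely in $z\in\bC^d$. A direct calculation gives $|G_t(x+iy)|=G_t(x)\,e^{|y|^2/(4t)}$, so that on the tube $\{|y|\le\sqrt{t}/c\}$ one obtains $\|e^{t\Delta}f\|_{L^\infty(\cD_t)}\le e^{1/(4c^2)}\|f\|_\infty$; an analogous contour-shift argument applied to the Oseen kernel associated with $e^{(t-s)\Delta}\,\bP\,\mbox{div}$ yields the expected $(t-s)^{-1/2}$ smoothing with a complexification constant that remains uniformly controlled provided $|y|\le\sqrt{t-s}/c$ (and hence \emph{a fortiori} on $\cD_t$).

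Next, I would set up the closed ball
\begin{align*}
X_T=\Bigl\{U\in C_w([0,T];L^\infty(\cD_t)):\; \sup_{t\le T}\|U(t)\|_{L^\infty(\cD_t)}\le M\|u_0\|_\infty\Bigr\}
\end{align*}
and run Picard iteration for the Duhamel map $U\mapsto e^{t\Delta}u_0 + B(U,U)$. The estimates above give
\begin{align*}
\|e^{t\Delta}u_0\|_{X_T}\le e^{1/(4c^2)}\|u_0\|_\infty \quad\mbox{and}\quad \|B(U,V)\|_{X_T}\le C(c)\,\sqrt{T}\,\|U\|_{X_T}\|V\|_{X_T}.
\end{align*}
Choosing $c=c(M)$ large enough that $e^{1/(4c(M)^2)}$ is comfortably less than $M$, and then $T$ small enough that $C(c(M))\,\sqrt{T}\cdot M\|u_0\|_\infty$ is bounded by a suitable universal constant--which holds for $T\le 1/(c(M)^2\|u_0\|_\infty^2)$ after absorbing constants into a redefined $c(M)$--closes the contraction in $X_T$, producing the unique fixed point whose analytic extension $U(t)$ is then, by construction, an element of $L^\infty(\cD_t)$ with the stated bound.

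For the vorticity version, the same scheme is applied to the mild formulation of the vorticity equation, with the velocity recovered from $\omega$ via the Biot-Savart law; the latter is bounded from $L^\infty\cap L^1$ into $L^\infty$, which explains the $L^1$-hypothesis on $\omega_0$. A careful exploitation of the structure of the nonlinearity $(\omega\cdot\nabla)u-(u\cdot\nabla)\omega$, combined with the conservation-type control $\|\omega(t)\|_1\lesssim\|\omega_0\|_1$ within the iteration, yields the improved lifespan $T\ge 1/(c(M)\|\omega_0\|_\infty)$ and the slightly larger tube radius $\sqrt{t}/\sqrt{c(M)}$. The main obstacle throughout is that the Leray projector $\bP$ is unbounded on $L^\infty$, so one cannot split $\bP$ off from the heat semigroup in a naive fashion; the resolution is to keep $e^{(t-s)\Delta}\,\bP\,\mbox{div}$ together as a single Oseen convolution operator whose kernel has sufficient pointwise decay and admits the required complex analytic extension by direct contour shift.
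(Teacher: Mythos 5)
Your proposal and the paper's treatment both ultimately descend from the complexified mild-formulation scheme of Gruji\'c--Kukavica, but the technical routes are genuinely different. The paper (in its proof of the related Theorem~\ref{th:MainThmVel}, which is the template it uses for this family of statements) encodes the analyticity radius through the reparametrization $y=\alpha t$: it writes the coupled real system satisfied by $U_\alpha^{(n)}(x,t)=u^{(n)}(x,\alpha t,t)$ and $V_\alpha^{(n)}(x,t)=v^{(n)}(x,\alpha t,t)$, derives uniform-in-$n$ bounds on this real--imaginary pair (the $\alpha\cdot\nabla$ drift encoding the cost of moving into the complex domain), and then passes to the limit via Montel. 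You instead propose a direct Picard contraction in a Banach space of functions analytic on the shrinking tubes $\cD_t$, estimating the complexified heat and Oseen kernels by explicit contour shifts. Your route is cleaner for uniqueness and avoids the auxiliary $\alpha$-family, while the paper's route avoids ever having to manipulate the complexified Oseen kernel directly. Both are viable.

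However, there is a concrete error in the contour-shift step, and it is not cosmetic because it sits at the heart of the whole argument. You assert that the Oseen estimate holds ``provided $|y|\le\sqrt{t-s}/c$ (and hence a fortiori on $\cD_t$).'' The ``a fortiori'' is backwards: on $\cD_t$ one has $|y|\le\sqrt{t}/c$, and $\sqrt{t-s}/c\le\sqrt{t}/c$, so the condition $|y|\le\sqrt{t-s}/c$ is strictly more restrictive than membership in $\cD_t$; it fails badly as $s\uparrow t$. If one convolves the Oseen kernel $K_{t-s}$ over the real slice $\bR^d$ at a point $x+iy$ with $|y|\approx\sqrt{t}/c$, the factor $e^{C|y|^2/(t-s)}$ blows up near $s=t$ and the $(t-s)^{-1/2}$ smoothing is destroyed. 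The correct argument, which you gesture at but do not actually run, is to use that $(U\otimes U)(s)$ is itself analytic and bounded on the smaller tube $\{|y'|\le\sqrt{s}/c\}$, deform the contour of integration into that tube, and observe that the residual imaginary displacement seen by the Oseen kernel is $|y-y'|\le(\sqrt{t}-\sqrt{s})/c\le\sqrt{t-s}/c$ by subadditivity of $\sqrt{\,\cdot\,}$. That interplay---radius gained by the heat kernel on $[s,t]$ added to the radius already held at time $s$, matching the target radius at time $t$---is exactly what the paper's $y=\alpha t$ parametrization encodes automatically, and exactly what your write-up skips. Without it the bilinear estimate does not close on all of $\cD_t$. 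Secondarily, your one-line justification for the vorticity lifespan $T\ge 1/(c(M)\|\omega_0\|_\infty)$ hides where $\|\omega_0\|_1$ enters through the Biot--Savart bound $\|u\|_\infty\lesssim\|\omega\|_1^{1/3}\|\omega\|_\infty^{2/3}$; as stated the constant $c(M)$ would necessarily absorb a dependence on $\|\omega_0\|_1$ that should at least be acknowledged.
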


\medskip

The following two lemmas are included for the reader's convenience.

\begin{lemma}[\citet{Nirenberg1959} or \citet{Gagliardo1959}]\label{le:GNIneq}
Suppose $p,q,r\in[1,\infty]$, $s\in\bR$ and $m,j,d\in\bN$ satisfy
\begin{align*}
\frac{1}{p} = \frac{j}{d} + \left(\frac{1}{r}-\frac{m}{d}\right)s + \frac{1-s}{q} \ , \qquad \frac{j}{m}\le s\le 1.
\end{align*}
Then there exists constant $C$ only depending on $m,d,j,q,r,s$ such that for any function $f: \bR^d\to\bR^d$
\begin{align*}
\|D^j f\|_{L^p}\le C\|D^m f\|_{L^r}^s\|f\|_{L^q}^{1-s}.
\end{align*}
\end{lemma}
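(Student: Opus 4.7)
The plan is to follow the classical Nirenberg--Gagliardo approach in three stages: (i) verify that scaling forces the stated relation on the exponents; (ii) establish the endpoint $s = j/m$ by iterated integration by parts interleaved with Sobolev embedding; and (iii) obtain the full range $s \in [j/m, 1]$ by a H\"older interpolation between $s = j/m$ and the trivial $s = 1$.

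For (i), setting $f_\lambda(x) := f(\lambda x)$ gives $\|D^j f_\lambda\|_{L^p} = \lambda^{j - d/p}\|D^j f\|_{L^p}$ and analogous relations for the two factors on the right-hand side. Demanding that the inequality survive both $\lambda \to 0$ and $\lambda \to \infty$ forces the $\lambda$-exponents on the two sides to match, which is exactly the stated algebraic condition; this step doesn't prove anything by itself, but it shows that no alternative choice of $s$ is admissible and isolates the single free parameter.

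For (ii), the core building block is the one-derivative interpolation
\begin{equation*}
\|D^{k} f\|_{L^{p}}^{2} \le C\,\|D^{k-1} f\|_{L^{p_1}}\,\|D^{k+1} f\|_{L^{p_2}},\qquad \tfrac{2}{p}=\tfrac{1}{p_1}+\tfrac{1}{p_2},
\end{equation*}
proved componentwise by writing $\int |D^k f|^{p-2}\,D^k f\cdot D^k f\,dx$, integrating one derivative by parts, and applying H\"older (valid for $1 < p_1, p, p_2 < \infty$). Starting at $k = j$ and marching outward toward $k = 0$ and $k = m$, with Sobolev embeddings $W^{1,a}\hookrightarrow L^b$ (for $\tfrac{1}{b} = \tfrac{1}{a} - \tfrac{1}{d}$) inserted to pivot between integrability exponents, one reaches $\|D^j f\|_{L^{p_*}} \le C\|D^m f\|_{L^r}^{j/m}\|f\|_{L^q}^{1 - j/m}$ with $p_*$ determined by scaling. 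The other endpoint $s = 1$ is just iterated Sobolev embedding of $D^j f$ through $m - j$ derivatives. For (iii), given $s\in(j/m,1)$, write $s = (1-\theta)\tfrac{j}{m} + \theta$ and split $\|D^j f\|_{L^p}\le\|D^j f\|_{L^{p_0}}^{1-\theta}\|D^j f\|_{L^{p_1}}^{\theta}$ via H\"older, choosing $p_0, p_1$ so that the GN scaling is satisfied at $s = j/m$ and at $s = 1$ respectively; applying the two endpoint bounds concludes.

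The main obstacle is the limit cases $p, q, r \in \{1, \infty\}$, where integration by parts and basic Sobolev embedding both degenerate. These are recovered either by density/approximation with smooth compactly supported functions together with a continuity argument, by Lorentz-space refinements using $L^{p,1}$--$L^{p,\infty}$ duality, or by representing $D^j f = c\,I_{m-j}\,D^m f$ via the Riesz potential and invoking the weak-type Hardy--Littlewood--Sobolev inequality. Since the statement is entirely classical, the paper rightly defers to the original works of Nirenberg and Gagliardo for the full technical execution.
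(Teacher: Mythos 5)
The paper offers no proof of this lemma at all: it is quoted from \citet{Nirenberg1959} and \citet{Gagliardo1959} ``for the reader's convenience,'' so there is no in-paper argument to compare yours against. Your outline is the standard classical strategy and is essentially correct as a sketch: the scaling computation in (i) correctly identifies the exponent relation as necessary; the one-derivative interpolation $\|D^k f\|_{L^p}^2\le C\|D^{k-1}f\|_{L^{p_1}}\|D^{k+1}f\|_{L^{p_2}}$ obtained by integration by parts and H\"older is the right building block for the endpoint $s=j/m$; and H\"older interpolation of $\|D^jf\|_{L^p}$ between the two endpoint exponents handles intermediate $s$. Two caveats. First, the heavy bookkeeping (the induction that marches from $k=j$ outward while threading Sobolev embeddings so that all intermediate exponents stay in $(1,\infty)$) is precisely where the real work lives, and you defer it entirely; as written this is an outline, not a proof. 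Second, and more substantively, the inequality in the stated generality has genuine exceptional cases --- notably, for $s=1$ with $1<r<\infty$ and $m-j-d/r$ a nonnegative integer the estimate fails, and your step (iii) interpolates through the $s=1$ endpoint without excluding them. The paper's own statement also omits this exception (its applications use exponent configurations away from the degenerate set, so no harm results downstream), but a self-contained proof would have to either restrict the hypotheses or treat these cases separately rather than deferring wholesale to the original papers.
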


\medskip

\begin{lemma}[Montel's]\label{le:Montel}
Let $q\in[1,\infty]$ and let $\mathscr{F}$ be a family of analytic functions $f$ on an open set $\Omega\subset \bC^d$ such that
\vspace{-0.15in}
\begin{align*}
\underset{f\in\mathscr{F}}{\sup}\ \|f\|_{L^q(\Omega)}<\infty\ .
\end{align*}
Then $\mathscr{F}$ is a normal family.
\end{lemma}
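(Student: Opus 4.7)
The plan is to reduce the statement to the classical (uniform) Montel theorem by showing that an $L^q(\Omega)$-bound automatically upgrades to a local uniform bound on $\Omega$, and then invoking the standard fact that a locally uniformly bounded family of holomorphic functions on $\Omega \subset \bC^d$ is normal. The case $q = \infty$ is immediate, since then $\mathscr{F}$ is already uniformly bounded on all of $\Omega$; so the real content lies in the range $q \in [1, \infty)$.

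For $q \in [1, \infty)$, the first step is to exploit the plurisubharmonicity of $|f|^q$ for any $f$ holomorphic on $\Omega$: since $\log |f|$ is plurisubharmonic, so is $|f|^q = \exp(q \log |f|)$ (with the convention $|f|^q \equiv 0$ where $f \equiv 0$). Therefore $|f|^q$ satisfies the sub-mean-value inequality on every Euclidean ball $B_\rho(z) \subset \Omega$, namely
\begin{align*}
|f(z)|^q \;\le\; \frac{1}{\mu(B_\rho(z))} \int_{B_\rho(z)} |f(w)|^q \, d\mu(w) \;\le\; \frac{1}{\mu(B_\rho(z))}\, \|f\|_{L^q(\Omega)}^q,
\end{align*}
where $\mu$ denotes Lebesgue measure on $\bC^d \cong \bR^{2d}$. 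Taking the $q$-th root yields the pointwise bound
\begin{align*}
|f(z)| \;\le\; \mu(B_\rho(z))^{-1/q} \, \|f\|_{L^q(\Omega)}.
\end{align*}

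The second step is to globalize this pointwise estimate to compact subsets. Given a compact $K \subset \Omega$, let $\rho_K := \tfrac{1}{2}\,\mathrm{dist}(K, \partial \Omega) > 0$; then $B_{\rho_K}(z) \subset \Omega$ for every $z \in K$, and the estimate above gives
\begin{align*}
\sup_{z \in K} |f(z)| \;\le\; C(\rho_K, d, q) \, \|f\|_{L^q(\Omega)} \;\le\; C(\rho_K, d, q) \, \sup_{g \in \mathscr{F}} \|g\|_{L^q(\Omega)}
\end{align*}
for every $f \in \mathscr{F}$, with $C(\rho_K, d, q) = \mu(B_{\rho_K}(0))^{-1/q}$. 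Hence $\mathscr{F}$ is uniformly bounded on $K$; since $K$ was an arbitrary compact subset of $\Omega$, the family is locally uniformly bounded on $\Omega$.

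The final step is to invoke the classical multivariate Montel theorem: any locally uniformly bounded family of holomorphic functions on an open subset of $\bC^d$ is normal, i.e., every sequence admits a subsequence converging locally uniformly (and thus to a holomorphic limit, by Weierstrass). This concludes the proof. I do not anticipate a genuine obstacle here: the plurisubharmonicity of $|f|^q$ and the resulting sub-mean-value inequality are standard, and the passage from an integral bound on $\Omega$ to a sup-bound on compacts via distance-to-boundary is the routine mechanism. The only mild care needed is in handling possible zeros of $f$ when invoking $|f|^q = \exp(q\log|f|)$, but this is resolved by the usual upper-semicontinuous extension of plurisubharmonic functions.
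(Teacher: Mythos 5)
The paper states this lemma without proof, as a recalled classical fact; your proposal supplies the standard argument, and it is correct. The reduction to the uniform Montel theorem via the sub-mean-value inequality for $|f|^q$ is exactly the canonical route. One small streamlining worth noting: for $q \ge 1$ you can avoid the $\log|f|$ detour (and the associated care with zeros and upper-semicontinuity) entirely by observing that $|f|$ itself is plurisubharmonic and $t \mapsto t^q$ is convex and increasing on $[0,\infty)$, so $|f|^q$ is plurisubharmonic by the standard composition rule, with no $-\infty$ values to manage. The rest — the radius $\rho_K = \tfrac12\,\mathrm{dist}(K,\partial\Omega)$, the resulting bound $\sup_K|f| \le \mu(B_{\rho_K})^{-1/q}\|f\|_{L^q(\Omega)}$, and the invocation of the multivariate Montel theorem for locally uniformly bounded families — is all sound.
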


\medskip

The main result of this section is as follows.

\begin{theorem}\label{th:MainThmVel}
Assume $u_0\in L^\infty(\bR^d)\cap L^p(\bR^d)$ for some $p \ge 2$ and $f(\cdot,t)$ is divergence-free and real-analytic in the space variable with the analyticity radius at least $\delta_f$ for all $t\in[0,\infty)$ with the analytic extension $f+ig$ satisfying
\begin{align*}
\Gamma_\infty^k(t) &:= \sup_{s<t}\sup_{|y|<\delta_f} \left(\|D^kf(\cdot,y,s)\|_{L^\infty}+\|D^kg(\cdot,y,s)\|_{L^\infty}\right)<\infty\ ,
\\
\Gamma_p(t) &:= \sup_{s<t}\sup_{|y|<\delta_f} \left(\|f(\cdot,y,s)\|_{L^p}+\|g(\cdot,y,s)\|_{L^p}\right)<\infty\ .
\end{align*}
Fix $k\in\bN$, $M>1$ and $t_0>0$ and let
\begin{align}
T_*&=\min\left\{\left(C_1(M) 2^{2k} \left(\|u_0\|_p + \Gamma_p(t_0)\right)^{2k/(k+\frac{d}{p})} \left(\|D^ku_0\|_\infty + \Gamma_\infty^k(t_0)\right)^{\frac{2d}{p}/(k+\frac{d}{p})} \right)^{-1}, \right. \notag
\\
&\qquad \left. \left(C_2(M) \left(\|u_0\|_p + \Gamma_p(T)\right)^{(k-1)/(k+\frac{d}{p})} \left(\|D^ku_0\|_\infty + \Gamma_\infty^k(T)\right)^{(1+\frac{d}{p})/(k+\frac{d}{p})} \right)^{-1} \right\} \label{eq:TimeLength}
\end{align}
where $C_i(M)$ are constants depending only on $M$. Then there exists a solution
\begin{align*}
u\in C([0,T_*),L^p(\bR^d)^d) \cap C([0,T_*),C^\infty(\bR^d)^d)
\end{align*}
of the NSE \eqref{eq:NSE1}-\eqref{eq:NSE3} such that for every $t\in (0,T_*)$, $u$ is a restriction of an analytic function $u(x,y,t)+iv(x,y,t)$ in the region
\begin{align}\label{eq:AnalDom}
\cD_t=: \left\{(x,y)\in\bC^d\ \big|\ |y|\le \min\{c(M)t^{1/2},\delta_f\}\right\}\ .
\end{align}
Moreover, $D^ku\in C([0,T_*),L^\infty(\bR^d)^d)$ and
\begin{align}
&\underset{t\in(0,T_*)}{\sup}\ \underset{y\in\cD_t}{\sup} \|u(\cdot,y,t)\|_{L^p} + \underset{t\in(0,T_*)}{\sup}\ \underset{y\in\cD_t}{\sup} \|v(\cdot,y,t)\|_{L^p}\le M\left(\|u_0\|_p + \Gamma_p(T_*)\right)\ ,
\\
&\underset{t\in(0,T_*)}{\sup}\ \underset{y\in\cD_t}{\sup} \|D^ku(\cdot,y,t)\|_{L^\infty} + \underset{t\in(0,T_*)}{\sup}\ \underset{y\in\cD_t}{\sup} \|D^kv(\cdot,y,t)\|_{L^\infty}\le M\left(\|D^ku_0\|_\infty + \Gamma_\infty^k(T_*)\right)\ .
\end{align}
A simplified version of the above result holds for real solutions, in which case, the time span $T_*$ is larger for the same constant $M$ while $\tilde{\Gamma}_p(T_*)$ and $\tilde{\Gamma}_\infty^k(T_*)$ do not contain the imaginary part $g$.
\end{theorem}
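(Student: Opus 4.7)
The plan is to construct the solution via a complexified Picard iteration in the mild (Duhamel) formulation, upgrading the $L^\infty$-based construction of Theorem~\ref{th:LinftyIVP} so as to allow initial data that are only $L^p$-integrable at the base level while carrying an additional $L^\infty$ control on the $k$-th spatial derivative. Iterates will live in a Banach space of divergence-free complexified fields $U(x+iy,t)=u(x,y,t)+iv(x,y,t)$, analytic in $y$ on the shrinking tent $\cD_t$ of \eqref{eq:AnalDom}. The crucial analytic input is the classical fact that $e^{t\Delta}$ extends holomorphically in $y$ to $|y|\le c\sqrt{t}$ while preserving $L^p(\bR^d)$ up to a constant absorbable into $M>1$ (for $c=c(M)$ small enough), and the analogous smoothing estimate for $D^\alpha e^{t\Delta}\bP\nabla$ into $L^\infty$, with the admissible complex radius further capped by $\delta_f$ so that the forcing still lives in the analytic tent.

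Concretely, I would equip the iteration space with the composite norm
\[
\|U\|_{T}:=\sup_{0<t<T}\sup_{y\in\cD_t}\|U(\cdot,y,t)\|_{L^p}+\sup_{0<t<T}\sup_{y\in\cD_t}\|D^k U(\cdot,y,t)\|_{L^\infty}
\]
and study the map
\[
\Phi[U](t):=e^{t\Delta}u_0-\int_0^t e^{(t-s)\Delta}\bP\nabla\cdot(U\otimes U)(s)\,ds+\int_0^t e^{(t-s)\Delta}\bP(f+ig)(s)\,ds
\]
on the closed ball of radius $M\bigl(\|u_0\|_p+\Gamma_p(T_*)+\|D^k u_0\|_\infty+\Gamma_\infty^k(T_*)\bigr)$. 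The linear contribution and the forcing integral are handled directly by complexified heat-kernel bounds, with the analytic-extension constants absorbed into $M>1$. What remains -- and what pins down $T_*$ -- is the bilinear estimate for the Duhamel term in both components of the norm.

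The main obstacle is bounding $D^k\int_0^t e^{(t-s)\Delta}\bP\nabla\cdot(U\otimes V)(s)\,ds$ in $L^\infty(\cD_t)$. I would expand via Leibniz, $D^k(U\otimes V)=\sum_{j=0}^k\binom{k}{j}D^j U\otimes D^{k-j}V$, and for each $j$ interpolate the two factors by Lemma~\ref{le:GNIneq} in the form $\|D^j W\|_{L^{q_j}}\le C\|D^k W\|_\infty^{s_j}\|W\|_p^{1-s_j}$, choosing $(q_j,q_{k-j})$ so that $e^{(t-s)\Delta}\bP\nabla$ maps the Hölder-conjugate product space ($\tfrac{1}{q}=\tfrac{1}{q_j}+\tfrac{1}{q_{k-j}}$) into $L^\infty(\cD_t)$ with an integrable singularity $(t-s)^{-\alpha_j}$, $\alpha_j<1$. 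After time integration each summand contributes a positive power of $T$ times a product $\|U\|_p^{a_j}\|D^k U\|_\infty^{b_j}\|V\|_p^{a'_j}\|D^k V\|_\infty^{b'_j}$ whose exponents sum to the quadratic scaling $2$. Summing over $j=0,\dots,k$ produces the combinatorial prefactor $2^{2k}$ in the first entry of \eqref{eq:TimeLength}; the second entry arises from the extremal loadings $j=0,k$, where the entire derivative budget falls on a single factor, producing a constraint that is linear rather than quadratic in the interpolated quantities. Taking the minimum of the two resulting constraints on $T$ reproduces the stated $T_*$. The analogous $L^p$ bilinear estimate is strictly easier. The hard part is the careful matching of Gagliardo-Nirenberg / Hölder exponents to heat-kernel time powers so that the final exponents on $\|u_0\|_p+\Gamma_p$ and $\|D^k u_0\|_\infty+\Gamma_\infty^k$ come out exactly as in \eqref{eq:TimeLength}.

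With the contraction of $\Phi$ on $[0,T_*]$ in hand, Banach's fixed-point theorem yields a unique $U+iV$ analytic on $\cD_t$ for each $t\in(0,T_*)$, and the stated $L^p$ and $D^k L^\infty$ sup-bounds are inherited from membership in the ball. Spatial smoothness of the real trace $u:=u(\cdot,0,t)$ is automatic from analyticity, and continuity in $t$ into $L^p$ and into $L^\infty$ at the $k$-th derivative level follows from the mild equation together with the uniform complex bounds; Lemma~\ref{le:Montel} is invoked if one first mollifies both $u_0$ and the forcing, constructs the solutions for the regularised data, and then passes to the limit under the uniform bounds. For the real-solution refinement one drops the complexification entirely ($V\equiv g\equiv 0$), which eliminates the mixed real-imaginary terms in the bilinear estimate and leaves the same scaling with strictly smaller constants, hence a larger admissible $T_*$ for the same $M$.
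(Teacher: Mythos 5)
Your overall strategy matches the paper's: complexified Picard iteration in the Duhamel formulation, Leibniz expansion of $D^k$ of the nonlinearity, Gagliardo--Nirenberg interpolation of intermediate derivatives against the endpoint pair $(\|U\|_p,\|D^kU\|_\infty)$, and Montel's theorem to pass to the limit. The two ingredients the paper relies on that you phrase a little differently but which amount to the same thing are (i) the shift $U(x,t)=u(x,\alpha t,t)$ to generate the analyticity tent and control it by the extra drift term $\alpha\cdot\nabla$, versus your appeal to holomorphic extension of $e^{t\Delta}$ directly, and (ii) the paper interpolates $\|D^jU\|_\infty$ directly into $\|D^kU\|_\infty$ and $\|U\|_p$ (yielding a uniform summand and a clean $(t-s)^{-1/2}$ heat singularity), whereas you propose intermediate $L^{q_j}$ spaces with Hölder matching; the paper's choice is simply the special case $q_j=\infty$ and avoids bookkeeping of variable time singularities $\alpha_j$.

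There is, however, a concrete misattribution in your accounting of the two constraints in \eqref{eq:TimeLength}, and it would cause you to look for the second one in the wrong place. In the $L^\infty$ estimate for $D^k$ of the bilinear term, every Leibniz summand $j=0,\dots,k$ scales \emph{identically}: by Lemma~\ref{le:GNIneq},
\begin{align*}
\|D^jU\|_\infty\|D^{k-j}U\|_\infty\lesssim \|U\|_p^{k/(k+d/p)}\|D^kU\|_\infty^{(k+2d/p)/(k+d/p)}\,,
\end{align*}
independently of $j$; the sum $\sum_j\binom{k}{j}=2^k$ then squares to $2^{2k}$ after absorbing $T^{1/2}$, producing only the \emph{first} entry of $T_*$. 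The extremal loadings $j=0,k$ are not special here, and there is no ``linear'' constraint hiding in that sum. The second entry of $T_*$ comes instead from the $L^p$ component of the composite norm --- precisely the part you describe as ``strictly easier'' and subdominant. In the paper's $L^p$ estimate the Duhamel and pressure terms contribute (after one GNS interpolation on $\|\nabla U\|_\infty$)
\begin{align*}
T\,\|U\|_p^{(2k-1+d/p)/(k+d/p)}\|D^kU\|_\infty^{(1+d/p)/(k+d/p)}\,,
\end{align*}
with a linear (not $T^{1/2}$) time factor, and it is this bound, not any extremal $j$ in the $L^\infty$ Leibniz sum, that forces the second term in the minimum defining $T_*$. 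If you run your proposal believing the second constraint is already contained in the $L^\infty$ step, you will close the fixed-point argument in that norm but fail to control $\|U\|_p$, so the ball you work on is not actually invariant. The fix is simple --- carry out the $L^p$ Duhamel/pressure estimate with the same care as the $L^\infty$ one and intersect the two resulting time restrictions --- but it is a necessary step, not an afterthought.
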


\noindent{\textit{Proof.}} \ We construct an approximating sequence as follows:
\begin{align*}
&u^{(0)}=0\ ,\quad \pi^{(0)}=0\ ,
\\
&\partial_t u^{(n)} -\Delta u^{(n)} = -\left(u^{(n-1)}\cdot \nabla\right)u^{(n-1)} - \nabla\pi^{(n-1)} +f\ ,
\\
&u^{(n)}(x,0)=u_0(x)\ ,\quad \nabla\cdot u^{(n)}=0\ ,
\\
&\Delta\pi^{(n)}=-\partial_j\partial_k\left(u_j^{(n)} u_k^{(n)}\right)\ .
\end{align*}
By the induction argument as in \citet{Guberovic2010}, $u^{(n)}(t) \in C([0,T], L^\infty(\bR^d))$ and each $u^{(n)}(t)$ is real analytic for every $t\in (0,T]$. Let $u^{(n)}(x,y,t)+iv^{(n)}(x,y,t)$ and $\pi^{(n)}(x,y,t)+i\rho^{(n)}(x,y,t)$ be the analytic extensions of $u^{(n)}$ and $\pi^{(n)}$ respectively. Inductively we have analytic extensions for all approximate solutions and the real and imaginary parts satisfy
\begin{align}
\partial_t u^{(n)}-\Delta u^{(n)} &= -\left(u^{(n-1)}\cdot \nabla\right)u^{(n-1)} + \left(v^{(n-1)}\cdot \nabla\right)v^{(n-1)} - \nabla\pi^{(n-1)} + f\ ,
\\
\partial_t v^{(n)}-\Delta v^{(n)} &= -\left(u^{(n-1)}\cdot \nabla\right)v^{(n-1)} - \left(v^{(n-1)}\cdot \nabla\right)u^{(n-1)} - \nabla\rho^{(n-1)} + g\ ,
\end{align}
where
\begin{align*}
\Delta \pi^{(n)} &= -\partial_j\partial_k\left(u^{(n)}_ju^{(n)}_k-v^{(n)}_jv^{(n)}_k\right),\qquad \Delta\rho^{(n)} = -2\partial_j\partial_k\left(u^{(n)}_jv^{(n)}_k\right)\ .
\end{align*}
Now define
\begin{align*}
U_\alpha^{(n)}(x,t)&= u^{(n)}(x,\alpha t,t), && \Pi_\alpha^{(n)}(x,t)= \pi^{(n)}(x,\alpha t,t), & F_\alpha(x,t)=f(x,\alpha t, t),
\\
V_\alpha^{(n)}(x,t)&= v^{(n)}(x,\alpha t,t), && R_\alpha^{(n)}(x,t)= \rho^{(n)}(x,\alpha t,t), & G_\alpha(x,t)=g(x,\alpha t, t);
\end{align*}
then the approximation scheme becomes (for simplicity we drop the subscript $\alpha$)
\begin{align*}
\partial_t U^{(n)}- \Delta U^{(n)} &= -\alpha\cdot \nabla V^{(n)}-\left(U^{(n-1)}\cdot \nabla\right)U^{(n-1)} + \left(V^{(n-1)}\cdot \nabla\right)V^{(n-1)} - \nabla\Pi^{(n-1)} + F\ ,
\\
\partial_t V^{(n)}- \Delta V^{(n)} &= -\alpha\cdot \nabla U^{(n)}-\left(U^{(n-1)}\cdot \nabla\right)V^{(n-1)} - \left(V^{(n-1)}\cdot \nabla\right)U^{(n-1)} - \nabla R^{(n-1)} + G\ ,
\\
\Delta \Pi^{(n)} &= -\partial_j\partial_k\left(U^{(n)}_jU^{(n)}_k-V^{(n)}_jV^{(n)}_k\right),\qquad \Delta R^{(n)} = -2\partial_j\partial_k\left(U^{(n)}_jV^{(n)}_k\right)\ ,
\end{align*}
supplemented with the initial conditions
\begin{align*}
U^{(n)}(x,0)=u_0(x),\qquad V^{(n)}(x,0)=0 \qquad\textrm{for all }x\in\bR^d\ ,
\end{align*}
leading to the following set of iterations,
\begin{align}
U^{(n)}(x,t)& =e^{t\Delta} u_0 - \int_0^te^{(t-s)\Delta}\left(U^{(n-1)}\cdot\nabla\right)U^{(n-1)} ds + \int_0^te^{(t-s)\Delta}\left(V^{(n-1)}\cdot\nabla\right)V^{(n-1)} ds \notag
\\
&\quad -\int_0^te^{(t-s)\Delta}\nabla\Pi^{(n-1)}ds + \int_0^te^{(t-s)\Delta} F\ ds - \int_0^te^{(t-s)\Delta}\alpha\cdot\nabla V^{(n)}ds\ , \label{eq:IterationU}
\\
V^{(n)}(x,t)& = - \int_0^te^{(t-s)\Delta}\left(U^{(n-1)}\cdot\nabla\right)V^{(n-1)} ds - \int_0^te^{(t-s)\Delta}\left(V^{(n-1)}\cdot\nabla\right)U^{(n-1)} ds \notag
\\
&\quad -\int_0^te^{(t-s)\Delta}\nabla R^{(n-1)}ds + \int_0^te^{(t-s)\Delta} G\ ds - \int_0^te^{(t-s)\Delta}\alpha\cdot\nabla U^{(n)}ds \label{eq:IterationV}
\end{align}
where
\begin{align*}
\Pi^{(n)}(x,t)& =-(\Delta)^{-1}\sum \partial_j\partial_k\left(U^{(n)}_jU^{(n)}_k-V^{(n)}_jV^{(n)}_k\right),
\\
R^{(n)}(x,t) &=-2(\Delta)^{-1}\sum \partial_j\partial_k\left(U^{(n)}_jV^{(n)}_k\right).
\end{align*}
In view of Theorem~\ref{th:LinftyIVP}, without loss of generality assume $u_0\in C^\infty$ and
$\left\|D^ku_0\right\|\lesssim k!\left\|u_0\right\|^k$.
Taking the $k$-th derivative on both sides of \eqref{eq:IterationU} and \eqref{eq:IterationV} yields
\begin{align}
D^kU^{(n)}(x,t)& =e^{t\Delta} D^ku_0 - \int_0^te^{(t-s)\Delta} D^k\left(U^{(n-1)}\cdot\nabla\right)U^{(n-1)} ds + \int_0^te^{(t-s)\Delta} D^k\left(V^{(n-1)}\cdot\nabla\right)V^{(n-1)} ds \notag
\\
&\quad -\int_0^te^{(t-s)\Delta}\nabla D^k\Pi^{(n-1)}ds + \int_0^te^{(t-s)\Delta} D^kF\ ds - \int_0^te^{(t-s)\Delta}\alpha\cdot\nabla D^kV^{(n)}ds\ , \label{eq:IterationDU}
\\
D^kV^{(n)}(x,t)& = - \int_0^te^{(t-s)\Delta} D^k\left(U^{(n-1)}\cdot\nabla\right)V^{(n-1)} ds - \int_0^te^{(t-s)\Delta} D^k\left(V^{(n-1)}\cdot\nabla\right)U^{(n-1)} ds \notag
\\
&\quad -\int_0^te^{(t-s)\Delta}\nabla D^k R^{(n-1)}ds + \int_0^te^{(t-s)\Delta} D^kG\ ds - \int_0^te^{(t-s)\Delta}\alpha\cdot\nabla D^kU^{(n)}ds \label{eq:IterationDV}.
\end{align}
We claim that
\begin{align*}
&L_n:=\underset{t<T}{\sup}\ \|U^{(n)}\|_{L^p}+\underset{t<T}{\sup}\ \|V^{(n)}\|_{L^p}\ ,\quad L_n':=\underset{t<T}{\sup}\ \|D^kU^{(n)}\|_{L^\infty}+\underset{t<T}{\sup}\ \|D^kV^{(n)}\|_{L^\infty}\
\end{align*}
are all bounded by a constant determined only by $k$, $\|u_0\|_{L^\infty}$, $\|u_0\|_{L^p}$, $F$ and $G$.

\bigskip

\textit{Proof of the claim:} At the initial step of the iteration, i.e.
\begin{align}
U^{(0)}(x,t)& =e^{t\Delta} u_0 -\int_0^te^{(t-s)\Delta} F\ ds - \int_0^te^{(t-s)\Delta}\alpha\cdot\nabla V^{(0)}ds\ , \label{eq:IniIterU}
\\
V^{(0)}(x,t)& = \int_0^te^{(t-s)\Delta} G\ ds - \int_0^te^{(t-s)\Delta}\alpha\cdot\nabla U^{(0)}ds\ , \label{eq:IniIterV}
\end{align}
the $L^p$-estimates are as follows:
\begin{align}
\|U^{(0)}\|_{L^p}& \lesssim \|e^{t\Delta}u_0\|_{L^p} + \int_0^t\| e^{(t-s)\Delta} F \|_{L^p}ds + \int_0^t\| e^{(t-s)\Delta} \alpha\cdot \nabla V^{(0)} \|_{L^p}ds \notag
\\
& \lesssim \|u_0\|_{L^p} + \int_0^t \|F\|_{L^p}ds + |\alpha|\int_0^t\| \nabla e^{(t-s)\Delta} V^{(0)} \|_{L^p}ds \notag
\\
& \lesssim \|u_0\|_{L^p} + t\ \underset{s<t}{\sup}\ \|F\|_{L^p} + |\alpha|\int_0^t(t-s)^{-1/2}\|V^{(0)} \|_{L^p}ds \notag
\\
& \lesssim \|u_0\|_{L^p} + t\ \underset{s<t}{\sup}\ \|F\|_{L^p} + |\alpha|t^{1/2}\underset{s<t}{\sup}\ \|V^{(0)} \|_{L^p}\ . \label{eq:bmoEstU0}
\end{align}
Similarly,
\begin{align}\label{eq:bmoEstV0}
\|V^{(0)}\|_{L^p} \lesssim\|u_0\|_{L^p} + t\ \underset{s<t}{\sup}\ \|G\|_{L^p} + |\alpha|t^{1/2}\underset{s<t}{\sup}\ \|U^{(0)}\|_{L^p}\ .
\end{align}
If $\alpha$ is a vector such that $C|\alpha|t^{1/2}<1/2$ for all $t<T$ (with a suitable $C$),
then combining \eqref{eq:bmoEstU0} and \eqref{eq:bmoEstV0} gives
\begin{align}\label{eq:InitialBMO}
\underset{t<T}{\sup}\ \|U^{(0)}\|_{L^p}+\underset{t<T}{\sup}\ \|V^{(0)}\|_{L^p}\lesssim\|u_0\|_{L^p} + T \left(\underset{t<T}{\sup}\|F\|_{L^p}+\underset{t<T}{\sup}\|G\|_{L^p}\right)\ .
\end{align}
Taking the $L^\infty$-norms of the $j$th derivative on both sides of \eqref{eq:IniIterU} and \eqref{eq:IniIterV} yields
\begin{align*}
\|D^kU^{(0)}\|_{L^\infty}& \lesssim \|e^{t\Delta}D^ku_0\|_{L^\infty} + \int_0^t\| e^{(t-s)\Delta} D^kF \|_{L^\infty}ds + \int_0^t\| e^{(t-s)\Delta} \alpha\cdot \nabla D^kV^{(0)} \|_{L^\infty}ds
\\
& \lesssim \|D^ku_0\|_{L^\infty} + \int_0^t\|D^kF \|_{L^\infty}ds + |\alpha|\int_0^t (t-s)^{-1/2}\|D^kV^{(0)} \|_{L^\infty}ds
\\
& \lesssim \|D^ku_0\|_{L^\infty} + t\|D^kF \|_{L^\infty} + |\alpha|t^{1/2}\|D^kV^{(0)} \|_{L^\infty}\ .
\end{align*}
Similarly, if $\alpha$ is such that $C|\alpha|t^{1/2}<1/2$ for all $t<T$ (with a suitable $C$), then
\begin{align}\label{eq:InitialLinfty}
\underset{t<T}{\sup}\ \|D^kU^{(0)}\|_{L^\infty}+\underset{t<T}{\sup}\ \|D^kV^{(0)}\|_{L^\infty}\lesssim \|D^ku_0\|_{L^\infty} + T \left(\underset{t<T}{\sup} \|D^kF\|_{L^\infty}+\underset{t<T}{\sup} \|D^kG\|_{L^\infty}\right)\ .
\end{align}
Collecting the estimates \eqref{eq:InitialBMO} and \eqref{eq:InitialLinfty},
\begin{align*}
L_0 &\lesssim \|u_0\|_{L^p} + T \left(\underset{t<T}{\sup}\|F\|_{L^p}+\underset{t<T}{\sup}\|G\|_{L^p}\right)
\\
L_0' &\lesssim \|D^ju_0\|_{L^\infty} + T \left(\underset{t<T}{\sup} \|D^jF\|_{L^\infty}+\underset{t<T}{\sup} \|D^jG\|_{L^\infty}\right)\ .
\end{align*}
To control the rest of $L_n$ and $L_n'$ in the iteration scheme, the nonlinear and the pressure estimates play the
essential role. We demonstrate the $L^\infty$-estimates on the three representative terms, namely,

\[
\int_0^t e^{(t-s)\Delta}(U^{(n)}\cdot \nabla)U^{(n)}ds, \  \int_0^t e^{(t-s)\Delta}(U^{(n)}\cdot \nabla)V^{(n)}ds
 \ \ \mbox{and}  \  \ \int_0^t e^{(t-s)\Delta}\nabla\Pi^{(n)}ds.
\]
First, observe that by Lemma~\ref{le:GNIneq},
\begin{align}
\|D^jU^{(n)}\|_\infty \lesssim \|D^kU^{(n)}\|_\infty^{(j+\frac{d}{p})/(k+\frac{d}{p})}\|U^{(n)}\|_p^{(k-j)/(k+\frac{d}{p})}\ ,\qquad\forall\ 0\le j \le k.
\end{align}
In addition, since $\nabla\cdot U^{(n)}=0$, $(U^{(n)}\cdot\nabla) U^{(n)}=\nabla \cdot (U^{(n)}\otimes U^{(n)})$. The first term is then estimated as follows,
\begin{align*}
&\left\|\int_0^te^{(t-s)\Delta}\nabla D^k\left(U^{(n-1)}\otimes U^{(n-1)}\right) ds\right\|_\infty \lesssim \int_0^t (t-s)^{-\frac{1}{2}} ds \left(\sum_{i=0}^k \binom k i \|D^iU^{(n-1)}\|_\infty\|D^{k-i}U^{(n-1)}\|_\infty \right)
\\
&\qquad\qquad\qquad\lesssim t^{\frac{1}{2}} \left(\sum_{i=0}^k \binom k i \|D^kU^{(n-1)}\|_\infty^{(i+\frac{d}{p})/(k+\frac{d}{p})}\|U^{(n-1)}\|_p^{(k-i)/(k+\frac{d}{p})} \right.
\\
&\qquad\qquad\qquad\qquad\qquad\qquad\qquad\qquad \left. \|D^kU^{(n-1)}\|_\infty^{(k-i+\frac{d}{p})/(k+\frac{d}{p})}\|U^{(n-1)}\|_p^{i/(k+\frac{d}{p})} \right)
\\
&\qquad\qquad\qquad\lesssim t^{\frac{1}{2}} \sum_{i=0}^k \binom k i  \|U^{(n-1)}\|_p^{k/(k+\frac{d}{p})}  \|D^kU^{(n-1)}\|_\infty^{(k+\frac{2d}{p})/(k+\frac{d}{p})}
\\
&\qquad\qquad\qquad\lesssim t^{\frac{1}{2}} 2^k \|U^{(n-1)}\|_p^{k/(k+\frac{d}{p})} \|D^kU^{(n-1)}\|_\infty^{(k+\frac{2d}{p})/(k+\frac{d}{p})}.
\end{align*}
Similarly,
\begin{align*}
&\left\|\int_0^te^{(t-s)\Delta}\nabla D^k\Pi^{(n-1)}ds\right\|_\infty \lesssim \int_0^t \left\|\nabla e^{(t-s)\Delta} D^k\Pi^{(n-1)} \right\|_\infty ds
\\
&\qquad \lesssim \int_0^t (t-s)^{-\frac{1}{2}} \left(\left\|D^k\left(U^{(n-1)}\otimes U^{(n-1)}\right)\right\|_{BMO} + \left\|D^k\left(V^{(n-1)}\otimes V^{(n-1)}\right)\right\|_{BMO} \right)ds
\\
&\qquad \lesssim t^{\frac{1}{2}} 2^k \left(\|U^{(n-1)}\|_p^{k/(k+\frac{d}{p})} \|D^kU^{(n-1)}\|_\infty^{(k+\frac{2d}{p})/(k+\frac{d}{p})} + \|V^{(n-1)}\|_p^{k/(k+\frac{d}{p})} \|D^kV^{(n-1)}\|_\infty^{(k+\frac{2d}{p})/(k+\frac{d}{p})}\right).
\end{align*}
For the mixed product term,
\begin{align*}
&\left\|\int_0^te^{(t-s)\Delta} D^k\left(U^{(n-1)}\cdot\nabla\right)V^{(n-1)} ds\right\|_\infty
\lesssim \int_0^t (t-s)^{-\frac{1}{2}} ds \left\|D^{k-1}\left(U^{(n-1)}\cdot\nabla\right)V^{(n-1)} \right\|_\infty
\\
&\qquad\qquad\qquad \lesssim \int_0^t (t-s)^{-\frac{1}{2}} ds \left(\sum_{i=0}^{k-1} \binom {k-1} i \|D^iU^{(n-1)}\|_\infty\|D^{k-i}V^{(n-1)}\|_\infty \right)
\\
&\qquad\qquad\qquad \lesssim t^{\frac{1}{2}} \left(\sum_{i=0}^{k-1} \binom {k-1} i \|D^kU^{(n-1)}\|_\infty^{(i+\frac{d}{p})/(k+\frac{d}{p})}\|U^{(n-1)}\|_p^{(k-i)/(k+\frac{d}{p})} \right.
\\
&\qquad\qquad\qquad \qquad\qquad\qquad \left. \|D^kV^{(n-1)}\|_\infty^{(k-i+\frac{d}{p})/(k+\frac{d}{p})}\|V^{(n-1)}\|_p^{i/(k+\frac{d}{p})} \right)
\\
&\qquad\qquad\qquad \lesssim t^{\frac{1}{2}} 2^{k-1} \|U^{(n-1)}\|_p^{(k-i)/(k+\frac{d}{p})}\|V^{(n-1)}\|_p^{i/(k+\frac{d}{p})}
\\
&\qquad\qquad\qquad \qquad\qquad\qquad \|D^kU^{(n-1)}\|_\infty^{(i+\frac{d}{p})/(k+\frac{d}{p})} \|D^kV^{(n-1)}\|_\infty^{(k-i+\frac{d}{p})/(k+\frac{d}{p})}.
\end{align*}
The $L^p$-estimates are demonstrated on the pressure term,
\begin{align*}
\left\|\int_0^te^{(t-s)\Delta}\nabla \Pi^{(n-1)}ds\right\|_p &\lesssim \int_0^t \left\|e^{(t-s)\Delta} (\Delta)^{-1}\sum \partial_j\partial_k \nabla \left(U^{(n-1)}_jU^{(n-1)}_k-V^{(n-1)}_jV^{(n-1)}_k\right)\right\|_p ds
\\
&\lesssim \int_0^t \left\|\nabla \left(U^{(n-1)}_jU^{(n-1)}_k-V^{(n-1)}_jV^{(n-1)}_k\right)\right\|_p ds
\\
&\lesssim t \left(\|\nabla U^{(n-1)}\|_\infty \|U^{(n-1)}\|_p + \|\nabla V^{(n-1)}\|_\infty \|V^{(n-1)}\|_p \right)
\\
&\lesssim t \left(\|U^{(n-1)}\|_p^{(2k-1+\frac{d}{p})/(k+\frac{d}{p})}  \|D^k U^{(n-1)}\|_\infty^{(1+\frac{d}{p})/(k+\frac{d}{p})} \right.
\\
&\qquad \quad \left. + \|V^{(n-1)}\|_p^{(2k-1+\frac{d}{p})/(k+\frac{d}{p})}  \|D^k V^{(n-1)}\|_\infty^{(1+\frac{d}{p})/(k+\frac{d}{p})}.\right)
\end{align*}
In conclusion, the above argument implies
\begin{align*}
\|D^kU^{(n)}\|_{L^\infty} &\lesssim \|D^ku_0\|_{L^\infty} + T^{\frac{1}{2}} 2^k (L_{n-1})^{k/(k+\frac{d}{p})} \left(L_{n-1}'\right)^{(k+\frac{2d}{p})/(k+\frac{d}{p})}
\\
&\qquad + T\ \underset{t<T}{\sup}\|D^kF\|_{L^\infty} + |\alpha|t^{\frac{1}{2}} \|D^kU^{(n)}\|_{L^\infty}\
\end{align*}
and
\begin{align*}
\|U^{(n)}\|_{L^p} &\lesssim \|u_0\|_{L^p} + T\ (L_{n-1})^{(2k-1+\frac{d}{p})/(k+\frac{d}{p})} \left(L_{n-1}'\right)^{(1+\frac{d}{p})/(k+\frac{d}{p})}
\\
&\qquad + T\ \underset{t<T}{\sup}\|F\|_{L^p} + |\alpha|t^{\frac{1}{2}} \|U^{(n)}\|_{L^p} \ .
\end{align*}
Hence, with $|\alpha|t^{1/2}<1/2$,
\begin{align*}
L_{n}' &\lesssim \|D^ku_0\|_{L^\infty} + T^{\frac{1}{2}} 2^k (L_{n-1})^{k/(k+\frac{d}{p})} \left(L_{n-1}'\right)^{(k+\frac{2d}{p})/(k+\frac{d}{p})}+ \Gamma_\infty^k(T)\ ,
\\
L_n &\lesssim \|u_0\|_{L^p} + T\ (L_{n-1})^{(2k-1+\frac{d}{p})/(k+\frac{d}{p})} \left(L_{n-1}'\right)^{(1+\frac{d}{p})/(k+\frac{d}{p})}+ \Gamma_p(T)\ .
\end{align*}
In particular, if
\begin{align*}
T\le C\left(2^{2k}\cdot M^4/(M-1)^2 \cdot \left(\|u_0\|_p + \Gamma_p(T)\right)^{2k/(k+\frac{d}{p})} \cdot \left(\|D^ku_0\|_\infty + \Gamma_\infty^k(T)\right)^{\frac{2d}{p}/(k+\frac{d}{p})} \right)^{-1}
\end{align*}
and
\begin{align*}
T\le C\left(M^2/(M-1) \cdot \left(\|u_0\|_p + \Gamma_p(T)\right)^{(k-1)/(k+\frac{d}{p})} \cdot \left(\|D^ku_0\|_\infty + \Gamma_\infty^k(T)\right)^{(1+\frac{d}{p})/(k+\frac{d}{p})} \right)^{-1}
\end{align*}
then an induction argument yields
\begin{align*}
L_n' \le M\left(\|D^ku_0\|_\infty + \Gamma_\infty^k(T)\right),\qquad L_n \le M\left(\|u_0\|_p + \Gamma_p(T)\right) ,
\end{align*}
completing the proof of the claim.

\bigskip

Now the standard convergence argument based on Lemma~\ref{le:Montel} (applied for each $t$ with $q=p$ or $q=\infty$)
completes the proof that the limit function $u$ (i.e. the complexified solution of the NSE \eqref{eq:NSE1}-\eqref{eq:NSE3}) exists and is bounded locally uniformly in time (the time interval depends only on $k$, $\|u_0\|_{L^\infty}$, $\|u_0\|_{L^p}$, $F$ and $G$) and uniformly in $y$-variables over the complex domain
\begin{align*}
\cD_t=: \left\{(x,y)\in\bC^d\ \big|\ |y|\le \min\{ct^{1/2},\delta_f\}\right\}\
\end{align*}
with the upper bound only depending on $k$, $\|u_0\|_{L^\infty}$, $\|u_0\|_{L^p}$, $F$ and $G$.  The analyticity properties of $u$, are justified by the uniform convergence on any compact subset of $\cD_t$, following from Lemma~\ref{le:Montel} (see \citet{Grujic1998} and \citet{Guberovic2010} for more details). This ends the proof of Theorem~\ref{th:MainThmVel}.

\bigskip

An analogous result for the vorticity is the following.

\begin{theorem}\label{th:MainThmVor}
Assume the initial value $\omega_0\in L^\infty(\bR^3)\cap L^p(\bR^3)$ where $1\le p<3$. Fix $k\in\bN$, $M>1$ and $t_0>0$ and let
\begin{align}
T_*&=C(M)\cdot \min\left\{2^{-k} \left( \|\omega_0\|_p^{k/(k+\frac{d}{p})} \cdot \|D^k\omega_0\|_\infty^{\frac{d}{p}/(k+\frac{d}{p})} + \|\omega_0\|_p \right)^{-1}, \right. \notag
\\
&\qquad \left. \left(\|D^k\omega_0\|_\infty^{\frac{d/p}{k+\frac{d}{p}}}\|\omega_0\|_p^{\frac{k}{k+\frac{d}{p}}} + \|D^k\omega_0\|_\infty^{\frac{1+\frac{d}{p}}{k+\frac{d}{p}}}\|\omega_0\|_p^{\frac{k-1}{k+\frac{d}{p}}} + \|D^k\omega_0\|_\infty^{\frac{1}{k+\frac{d}{p}}}\|\omega_0\|_p^{\frac{k-1+\frac{d}{p}}{k+\frac{d}{p}}} \right)^{-1} \right\}
\end{align}
where $C_i(M)$ is a constant only depending on $M$ and $d=3$. Then there exists a solution
\begin{align*}
\omega\in C([0,T_*),L^p(\bR^3)^3) \cap C([0,T_*),C^\infty(\bR^3)^3)
\end{align*}
of the NSE \eqref{eq:NSE1}-\eqref{eq:NSE3} such that for every $t\in (0,T_*)$, $\omega$ is a restriction of an analytic function $\omega(x,y,t)+i\zeta(x,y,t)$ in the region
\begin{align}\label{eq:AnalDomVor}
\cD_t=: \left\{(x,y)\in\bC^3\ \big|\ |y|\le ct^{1/2}\right\}\ .
\end{align}
Moreover, $D^k\omega\in C([0,T_*),L^\infty(\bR^3)^3)$ and
\begin{align}
&\underset{t\in(0,T)}{\sup}\ \underset{y\in\cD_t}{\sup} \|\omega(\cdot,y,t)\|_{L^p} + \underset{t\in(0,T)}{\sup}\ \underset{y\in\cD_t}{\sup} \|\zeta(\cdot,y,t)\|_{L^p}\le M\|\omega_0\|_p \ ,
\\
&\underset{t\in(0,T)}{\sup}\ \underset{y\in\cD_t}{\sup} \|D^k\omega(\cdot,y,t)\|_{L^\infty} + \underset{t\in(0,T)}{\sup}\ \underset{y\in\cD_t}{\sup} \|D^k\zeta(\cdot,y,t)\|_{L^\infty}\le M\|D^k\omega_0\|_\infty \ .
\end{align}
Similar results hold for real solutions.
\end{theorem}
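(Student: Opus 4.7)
The plan is to adapt the iteration-complexification-Montel argument of Theorem~\ref{th:MainThmVel} to the vorticity formulation $\omega_t-\Delta\omega+(u\cdot\nabla)\omega=(\omega\cdot\nabla)u$. Two structural adjustments are needed. First, the pressure is gone, replaced by the Biot-Savart recovery $u=\cK[\omega]:=\nabla\times(-\Delta)^{-1}\omega$, and there are now two nonlinearities---transport and vortex-stretching---both carrying $\cK$ implicitly. Second, the velocity $L^\infty$-type bounds driving the nonlinear estimates must be extracted from $L^p\cap D^k L^\infty$ bounds on the vorticity; this is why the restriction $1\le p<3$ is imposed, since Biot-Savart then maps $L^p$ into $L^{p^*}$ with $p^*=3p/(3-p)<\infty$, and $u\in L^\infty$ follows from $\omega\in L^p\cap L^\infty$ via a standard Sobolev-endpoint argument. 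No external force is assumed, so the $\Gamma$-type quantities of Theorem~\ref{th:MainThmVel} do not appear.

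First I would set up the approximating sequence $\omega^{(n)}$ with $u^{(n-1)}=\cK[\omega^{(n-1)}]$, extend analytically to the complex strip as $\omega^{(n)}+i\zeta^{(n)}$ and $u^{(n)}+iv^{(n)}=\cK[\omega^{(n)}+i\zeta^{(n)}]$, and introduce the tilted variables $W^{(n)}(x,t)=\omega^{(n)}(x,\alpha t,t)$, $Z^{(n)}(x,t)=\zeta^{(n)}(x,\alpha t,t)$, together with the corresponding $U^{(n)},V^{(n)}$, just as in the velocity case. The linear tilted term $-\alpha\cdot\nabla Z^{(n)}$ is absorbed on the left under $C|\alpha|t^{1/2}<1/2$, while four real-valued nonlinear terms arise from expanding the complexified transport and stretching products. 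Setting
\[
L_n:=\sup_{t<T}\left(\|W^{(n)}\|_p+\|Z^{(n)}\|_p\right),\qquad L_n':=\sup_{t<T}\left(\|D^k W^{(n)}\|_\infty+\|D^k Z^{(n)}\|_\infty\right),
\]
the aim is to prove inductively that $L_n\le M\|\omega_0\|_p$ and $L_n'\le M\|D^k\omega_0\|_\infty$ on a time interval of length at least the stated $T_*$, then extract a limit via Lemma~\ref{le:Montel} applied with $q=p$ and $q=\infty$.

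The core work consists of three ingredients: Leibniz-expand $D^k$ applied to each nonlinear product, generating the binomial sum $\sum_{i=0}^{k}\binom{k}{i}=2^k$ that accounts for the $2^{-k}$ factor in the first branch of $T_*$; convert velocity derivatives via Biot-Savart, $\|D^j U^{(n-1)}\|_q\lesssim\|D^{j-1}W^{(n-1)}\|_q$ for $q\in(1,\infty)$, with the $q=\infty$ endpoint routed through BMO using the $t^{-1/2}$ smoothing of the heat kernel on divergence-form terms; and interpolate each summand through Lemma~\ref{le:GNIneq} into products $(L_{n-1})^{a_i}(L_{n-1}')^{b_i}$ with scale-invariant exponent sums. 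The top-order $L^\infty$-contribution consolidates, via $\|\omega_0\|_p^{k/(k+d/p)}\|D^k\omega_0\|_\infty^{(d/p)/(k+d/p)}\sim\|\omega_0\|_\infty$, into the first branch of $T_*$; the $L^p$-level estimates, split across three distinct H\"older-Gagliardo-Nirenberg endpoint allocations of the two nonlinear factors across $(L^\infty,L^p)$ at different derivative orders, produce the three summands in the second branch of $T_*$.

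The main obstacle will be the top-order $L^\infty$-control of $D^k U^{(n-1)}$, which via Biot-Savart corresponds to $D^{k-1}W^{(n-1)}$ composed with a Riesz-type transform; since Riesz transforms are unbounded on $L^\infty$, the estimate must pass through BMO, exploiting the half-derivative smoothing of the heat semigroup on the divergence-form structure of the nonlinearity---precisely the mechanism handling the pressure term in the proof of Theorem~\ref{th:MainThmVel}. Once the uniform bounds are secured, convergence along compact subsets of $\cD_t$ via Montel's theorem, identification of the limit as a classical solution of the vorticity NSE, and verification of the stated $L^p$ and $D^k L^\infty$ bounds all follow as in the velocity case.
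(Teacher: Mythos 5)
Your proposal follows essentially the same route as the paper's proof: the same complexified, tilted iteration with Biot--Savart recovery, Leibniz expansion generating the $2^k$ factor, Gagliardo--Nirenberg interpolation between $\|\omega\|_p$ and $\|D^k\omega\|_\infty$, and Montel's theorem for convergence, with the three H\"older/interpolation allocations at the $L^p$ level producing the three summands in the second branch of $T_*$. The one mechanical difference worth noting is that the paper controls $\|D^jU\|_\infty$ at the \emph{same} derivative order by splitting the Biot--Savart kernel $|y|^{-2}$ into a near-field piece (locally integrable) and a far-field piece (in $L^{p'}$ precisely because $p<3$), yielding $\|D^jU\|_\infty\lesssim\|D^jW\|_\infty+\|D^jW\|_{L^p}$ with no $(t-s)^{-1/2}$ loss --- which is why the nonlinear terms carry a factor of $t$ rather than $t^{1/2}$ and the stated $T_*$ involves the first (not second) power of the nonlinear quantities --- reserving the Calder\'on--Zygmund/heat-semigroup composition only for the genuinely derivative-shifting identification $\nabla U = TW$.
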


\noindent{\textit{Sketch of the proof.} \ Similarly as in the proof of Theorem~\ref{th:MainThmVel}, we construct an approximating sequence for the vorticity-velocity formulation
\begin{align}\label{eq:VelVorForm}
\partial_t\omega+(u\cdot \nabla)\omega= (\omega \cdot \nabla)u + \Delta \omega\ , \qquad \omega(\cdot,0)=\omega_0
\end{align}
as follows:
\begin{align*}
\partial_t\omega^{(n)} -\Delta \omega^{(n)} &=\omega^{(n-1)}\nabla u^{(n-1)} - u^{(n-1)}\nabla\omega^{(n-1)},\qquad \omega^{(n)}(0,x)=\omega_0\ ,
\\
u_j^{(n-1)}(x,t) &=c\int_{\mathbb{R}^3} \epsilon_{j,k,\ell}\ \partial_{y_k}\frac{1}{|x-y|}\omega_\ell^{(n-1)}(y,t)dy\ .
\end{align*}
We let $u^{(n)}+iv^{(n)}$ and $\omega^{(n)}+i\zeta^{(n)}$ be the analytic extension of the approximating sequence and let
\begin{align*}
&U^{(n)}(x,t)=u^{(n)}(x,\alpha t,t)\ , &&W^{(n)}(x,t)=w^{(n)}(x,\alpha t,t)\ ,
\\
&V^{(n)}(x,t)=v^{(n)}(x,\alpha t,t)\ , &&Z^{(n)}(x,t)=\zeta^{(n)}(x,\alpha t,t)\ ;
\end{align*}
then taking the $k$-th derivative (for the same reason as in the proof of Theorem~\ref{th:MainThmVel} we can assume $\omega_0\in C^\infty$) leads to the complexified iterations:
\begin{align*}
D^kW^{(n+1)}(x,t) &= e^{t\Delta}D^k\omega_0 +\int_0^t e^{(t-s)\Delta}D^k\left(W^{(n)}\nabla U^{(n)}\right)ds -\int_0^t e^{(t-s)\Delta}D^k\left(Z^{(n)}\nabla V^{(n)}\right)ds
\\
-\int_0^t & e^{(t-s)\Delta}D^k\left(U^{(n)}\nabla W^{(n)}\right)ds +\int_0^t e^{(t-s)\Delta}D^k\left(V^{(n)}\nabla Z^{(n)}\right)ds +\int_0^t e^{(t-s)\Delta}\alpha\cdot\nabla D^kZ^{(n+1)} ds
\\
D^kZ^{(n+1)}(x,t) &= \int_0^t e^{(t-s)\Delta}D^k\left(Z^{(n)}\nabla U^{(n)}\right)ds +\int_0^t e^{(t-s)\Delta}D^k\left(W^{(n)}\nabla V^{(n)}\right)ds
\\
-\int_0^t & e^{(t-s)\Delta}D^k\left(V^{(n)}\nabla W^{(n)}\right)ds -\int_0^t e^{(t-s)\Delta}D^k\left(U^{(n)}\nabla Z^{(n)}\right)ds -\int_0^t e^{(t-s)\Delta}\alpha\cdot\nabla D^kW^{(n+1)} ds
\end{align*}
where
\begin{align}
U_j^{(n)}(x,t) &=c\int_{\mathbb{R}^3} \epsilon_{j,k,\ell}\ \partial_{y_k}\frac{1}{|x-y|}W_\ell^{(n)}(y,t)dy\ , \label{eq:BiotSU}
\\
V_j^{(n)}(x,t) &=c\int_{\mathbb{R}^3} \epsilon_{j,k,\ell}\ \partial_{y_k}\frac{1}{|x-y|} Z_\ell^{(n)}(y,t)dy\ . \label{eq:BiotSV}
\end{align}
We claim that
\begin{align*}
&K_n:=\underset{t<T}{\sup}\ \|D^kW^{(n)}\|_{L^\infty}+\underset{t<T}{\sup}\ \|D^kZ^{(n)}\|_{L^\infty}\ ,\quad L_n:=\underset{t<T}{\sup}\ \|W^{(n)}\|_{L^p}+\underset{t<T}{\sup}\ \|Z^{(n)}\|_{L^p}\
\end{align*}
are all bounded by a constant only determined by $k$, $\|\omega_0\|_{L^\infty}$, $\|\omega_0\|_{L^p}$.

Let $W_x^{(n)}(y)$ denote the translation $W^{(n)}(x-y)$ and $B$ be the unit ball centered at 0. Then, from \eqref{eq:BiotSU}
it follows
\begin{align*}
\left|D^jU^{(n)}(x,t)\right| &\lesssim \int_B \frac{1}{|y|^2} \left|D^jW_x^{(n)}(y,t)\right| dy + \int_{B^c} \frac{1}{|y|^2} \left|D^jW_x^{(n)}(y,t)\right| dy
\\
&\lesssim \|D^jW_x^{(n)}\|_{{L^\infty}}\int_B |y|^{-2}dy + \|D^jW_x^{(n)}\|_{L^p}\||y|^{-2}\1_{B^c}\|_{L^{p'}}
\end{align*}
where we used that $p'>\frac{3}{2}$ (since $p<3$); hence, by Lemma~\ref{le:GNIneq}
\begin{align*}
\|D^jU^{(n)}\|_{L^\infty} &\lesssim \|D^jW^{(n)}\|_{{L^\infty}} + \|D^jW^{(n)}\|_{L^p}
\\
&\lesssim \|D^kW^{(n)}\|_\infty^{(j+\frac{d}{p})/(k+\frac{d}{p})}\|W^{(n)}\|_p^{(k-j)/(k+\frac{d}{p})} + \|D^kW^{(n)}\|_\infty^{j/(k+\frac{d}{p})}\|W^{(n)}\|_p^{1-j/(k+\frac{d}{p})}.
\end{align*}
Note that the map
\begin{align*}
(Tf)_j(x,t):=c\ \nabla\int_{\mathbb{R}^3} \epsilon_{j,k,\ell}\ \partial_{y_k}\frac{1}{|x-y|}f_\ell(y,t)dy
\end{align*}
defines a $\cC$-$\cZ$ operator. The $L^\infty$-estimates on the nonlinear terms are then as follows,
\begin{align*}
\left\|\int_0^t e^{(t-s)\Delta}D^k\left(W^{(n)}\nabla U^{(n)}\right)ds\right\|_\infty &\lesssim \sum_{i=0}^k \binom k i \int_0^t \|D^{k-i}W^{(n)}\|_\infty \left\|e^{(t-s)\Delta}|T D^iW^{(n)}|\right\|_\infty\ ds
\\
\lesssim \sum_{i=0}^k & \binom k i \int_0^t  \|D^{k-i}W^{(n)}\|_\infty \left(\|D^iW^{(n)}\|_\infty + \|D^iW^{(n)}\|_{L^p}\right)ds
\\
\lesssim t\sum_{i=0}^k \binom k i K_n^{(k-i+\frac{d}{p})/(k+\frac{d}{p})} & L_n^{i/(k+\frac{d}{p})} \left(K_n^{(i+\frac{d}{p})/(k+\frac{d}{p})}L_n^{(k-i)/(k+\frac{d}{p})} + K_n^{i/(k+\frac{d}{p})}L_n^{1-i/(k+\frac{d}{p})}\right)
\\
\lesssim t \sum_{i=0}^k \binom k i & \left(K_n^{(k+\frac{2d}{p})/(k+\frac{d}{p})} L_n^{k/(k+\frac{d}{p})} + K_n L_n\right)
\end{align*}
and
\begin{align*}
\left|\int_0^t \nabla e^{(t-s)\Delta}D^k\left(U^{(n)}W^{(n)}\right) ds\right|
&\lesssim\int_0^t \sum_{i=0}^k \binom k i \|D^{k-i}W^{(n)}\|_\infty \left\|\nabla G_{t-s}(x-\cdot)D^iU^{(n)}(\cdot)\right\|_{L^1}ds
\\
\lesssim \sum_{i=0}^k \binom k i & \int_0^t \|D^{k-i}W^{(n)}\|_\infty \left(\left\|G_{t-s}(x-\cdot)|\nabla |D^iU^{(n)}
(\cdot)||\right\|_{L^1}+ \|D^iU^{(n)}\|_\infty\right) ds
\\
\lesssim \sum_{i=0}^k \binom k i & \int_0^t \|D^{k-i}W^{(n)}\|_\infty \left(\left\|e^{(t-s)\Delta}|T D^iW^{(n)}|\right\|_\infty + \|D^iU^{(n)}\|_\infty\right) ds
\end{align*}
\begin{align*}
&\lesssim \sum_{i=0}^k \binom k i  \int_0^t \|D^{k-i}W^{(n)}\|_\infty \left(\|D^iW^{(n)}\|_{{L^\infty}} + \|D^iW^{(n)}\|_{L^p}\right)ds
\\
&\lesssim t \sum_{i=0}^k \binom k i  \left(K_n^{(k+\frac{2d}{p})/(k+\frac{d}{p})} L_n^{k/(k+\frac{d}{p})} + K_n L_n\right)\ .
\end{align*}
The $L^p$-estimates are summarized as
\begin{align*}
\left\|\int_0^t e^{(t-s)\Delta} W^{(n)}\nabla U^{(n)} ds\right\|_p \lesssim \int_0^t \|W^{(n)}\|_\infty \left\|e^{(t-s)\Delta}|T W^{(n)}|\right\|_p ds \lesssim t \cdot K_n^{\frac{d}{p}/(k+\frac{d}{p})}L_n^{(2k+\frac{d}{p})/(k+\frac{d}{p})}
\end{align*}
and
\begin{align*}
\left\|\int_0^t e^{(t-s)\Delta}U^{(n)}\nabla W^{(n)} ds\right\|_p &\lesssim \int_0^t \|U^{(n)}\|_\infty \left\|e^{(t-s)\Delta}\nabla W^{(n)}\right\|_p ds
\\
&\lesssim t \left(K_n^{\frac{d}{p}/(k+\frac{d}{p})}L_n^{k/(k+\frac{d}{p})} + L_n\right) K_n^{1/(k+\frac{d}{p})}L_n^{1-1/(k+\frac{d}{p})}.
\end{align*}
\vspace{-0.05in}
Thus, with $|\alpha|t^{1/2}<1/2$,
\begin{align*}
K_n &\lesssim \|D^k\omega_0\|_\infty + T\ 2^k \left(K_n^{\frac{d}{p}/(k+\frac{d}{p})} L_n^{k/(k+\frac{d}{p})} + L_n\right) K_n\ ,
\\
L_n &\lesssim \|\omega_0\|_p + T \left(K_n^{\frac{d}{p}/(k+\frac{d}{p})}L_n^{k/(k+\frac{d}{p})} + K_n^{(1+\frac{d}{p})/(k+\frac{d}{p})}L_n^{(k-1)/(k+\frac{d}{p})} + K_n^{1/(k+\frac{d}{p})}L_n^{1-1/(k+\frac{d}{p})} \right) L_n\ .
\end{align*}
In particular, if
\begin{align*}
T\le C\left(2^{k}\cdot M^2/(M-1) \left( \|\omega_0\|_p^{k/(k+\frac{d}{p})} \cdot \|D^k\omega_0\|_\infty^{\frac{d}{p}/(k+\frac{d}{p})} + \|\omega_0\|_p\right)\right)^{-1}
\end{align*}
\vspace{-0.05in}
and
\vspace{-0.05in}
\begin{align*}
T\le C\left(\frac{M^2}{M-1} \left(\|D^k\omega_0\|_\infty^{\frac{d/p}{k+\frac{d}{p}}}\|\omega_0\|_p^{\frac{k}{k+\frac{d}{p}}} + \|D^k\omega_0\|_\infty^{\frac{1+\frac{d}{p}}{k+\frac{d}{p}}}\|\omega_0\|_p^{\frac{k-1}{k+\frac{d}{p}}} + \|D^k\omega_0\|_\infty^{\frac{1}{k+\frac{d}{p}}}\|\omega_0\|_p^{\frac{k-1+\frac{d}{p}}{k+\frac{d}{p}}} \right) \right)^{-1},
\end{align*}
then an induction argument gives
\begin{align*}
K_n \le M \|D^k\omega_0\|_\infty\ ,\qquad L_n \le M \|\omega_0\|_p\ ,
\end{align*}
completing the proof.

%
%

\section{Asymptotic Zero Scaling Gap}\label{sec:Asym0ScGp}

In the first part of this section we compile some notions and ideas introduced in the prologue (including definitions \ref{def:1DSparse}-\ref{def:VecCompLSet} and the $Z_\alpha$-framework) with several results about sparseness of the regions of intense fluid activity whose mathematical setup was initiated in \citet{Grujic2001} and further developed and applied for various purposes in \citet{Grujic2013}, \citet{Farhat2017} and \citet{Bradshaw2019}, as well as present their level-$k$ generalizations based on the analyticity results derived in the previous section. In the second part we present a novel technique based on local-in-time dynamics of `chains of derivatives' in preparation not only for the proof of the main theorem but also for a more general theory of the blow-up (or the lack thereof) in the super-critical parabolic problems featuring a baseline \emph{a priori} bound, and then state and prove the main theorem.

\medskip

In the aforementioned articles, the ideas of the spatial intermittency were realized via the harmonic measure maximum principle for subharmonic functions as recorded, e.g., in \citet{Ahlfors1973} and \citet{Ransford1995}. Here we recall a result utilized in \citet{Bradshaw2019} ($h(z,\Omega,K)$ denotes the harmonic measure of $K$ with respect to $\Omega$, evaluated
at $z$).

\begin{proposition}[\citet{Ransford1995}]\label{prop:HarMaxPrin}
Let $\Omega$ be an open, connected set in $\bC$ such that its boundary has nonzero Hausdorff dimension, and let $K$ be a Borel subset of the boundary. Suppose that $u$ is a subharmonic function on $\Omega$ satisfying
\vspace{-0.05in}
\begin{align*}
u(z) &\le M\ , \quad \textrm{for }z\in\Omega
\\
\limsup_{z\to\zeta} u(z) &\le m\ , \quad \textrm{for }\zeta\in K.
\end{align*}
Then
\vspace{-0.1in}
\begin{align*}
u(z)\le m \, h(z,\Omega,K) + M(1-h(z,\Omega,K)) \ , \quad \textrm{for }z\in\Omega.
\end{align*}
\end{proposition}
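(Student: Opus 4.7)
The plan is to reduce the statement to the generalized maximum principle for subharmonic functions, applied to a carefully chosen auxiliary function. Specifically, I would set
\[
v(z) := u(z) - m\,h(z,\Omega,K) - M\bigl(1 - h(z,\Omega,K)\bigr), \qquad z \in \Omega,
\]
so that the desired conclusion is exactly $v(z) \le 0$ on $\Omega$.

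First I would verify that $v$ is subharmonic on $\Omega$. The hypothesis that $\partial\Omega$ has positive Hausdorff dimension (in particular, is nonpolar) ensures that the harmonic measure $h(\cdot,\Omega,K)$ is well-defined as the Perron--Wiener--Brelot solution to the Dirichlet problem with boundary data $\mathbf{1}_K$; in particular, it is harmonic in $\Omega$. Hence $m\,h + M(1-h)$ is harmonic, and $v$ is the sum of a subharmonic function $u$ and a harmonic function, so it is subharmonic. Note also that $v$ is bounded above, since $u \le M$ and $0 \le h \le 1$ imply $v \le M - \min(m,M)$.

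Next I would analyze the boundary behavior of $v$. For a regular boundary point $\zeta \in K$, one has $h(z,\Omega,K) \to 1$ as $z \to \zeta$ within $\Omega$, so $m\,h + M(1-h) \to m$, and combined with the hypothesis $\limsup_{z \to \zeta} u(z) \le m$ this yields $\limsup_{z \to \zeta} v(z) \le 0$. For a regular boundary point $\zeta \in \partial\Omega \setminus K$, $h(z,\Omega,K) \to 0$, so $m\,h + M(1-h) \to M$, and the global bound $u \le M$ gives $\limsup_{z \to \zeta} v(z) \le 0$ again.

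The main obstacle, and the reason the nonpolarity hypothesis on $\partial\Omega$ is crucial, is the set of irregular boundary points where $h$ may not attain its nominal boundary values, together with the point at infinity if $\Omega$ is unbounded. The resolution is that the set of irregular points of a nonpolar compact boundary piece is itself polar (Kellogg's theorem), and the extended maximum principle for subharmonic functions (see, e.g., Theorem 3.6.9 in \citet{Ransford1995}) states: if $v$ is subharmonic on $\Omega$, bounded above, and $\limsup_{z\to\zeta} v(z) \le 0$ for every $\zeta \in \partial\Omega$ outside a polar exceptional set (including $\infty$ when applicable), then $v \le 0$ on $\Omega$. Invoking this principle with the boundary estimates from the previous paragraph completes the proof.
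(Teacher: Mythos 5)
This proposition is quoted from Ransford's book and is not proved in the paper, so there is no internal proof to compare with; I can only assess your argument on its own merits, and it has a genuine gap in the boundary analysis. For a regular point $\zeta\in\partial\Omega\setminus K$ you assert that $h(z,\Omega,K)\to 0$ as $z\to\zeta$. This is correct only when $\mathbf{1}_K$ is continuous at $\zeta$ relative to $\partial\Omega$, i.e., when $\zeta\notin\overline{K}$. At a regular point $\zeta\in\overline{K}\setminus K$ the harmonic measure need not vanish in the limit: with $\Omega$ the unit disk, $K$ the open upper semicircle, and $\zeta=1$, one has $h(z,\Omega,K)\to 1/2$ along the real axis. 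At such a point the only bounds available are $u\le M$ and $h\le 1$, which give $\limsup_{z\to\zeta}v(z)\le M-m$, not $\le 0$. The set $\overline{K}\setminus K$ is in general not polar (e.g.\ $K=\partial\mathbb{D}\setminus C$ for a fat Cantor set $C$ makes $\overline{K}\setminus K=C$ of positive measure), so the polar-exceptional-set maximum principle you invoke cannot absorb it. (Your first boundary case, $\zeta\in K$, is in fact correct, but not for the reason you give: the claim $h\to 1$ is also false in general, yet $\limsup v\le(\limsup u - M)+(M-m)\cdot 1\le 0$ follows from the trivial bound $h\le 1$.)

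The repair is to avoid analyzing boundary behavior of $h$ at all and instead use the Perron--Wiener--Brelot definition of harmonic measure directly, which is how Ransford proves the two-constants theorem. Assuming $m<M$ (the case $m\ge M$ is trivial since then $mh+M(1-h)\ge M\ge u$), set $w:=(M-u)/(M-m)$. This $w$ is superharmonic, nonnegative on $\Omega$, and satisfies $\liminf_{z\to\zeta}w(z)\ge\mathbf{1}_K(\zeta)$ for every $\zeta\in\partial\Omega$ (on $K$ because $\limsup u\le m$, off $K$ because $w\ge 0$). Hence $w$ belongs to the upper Perron class for the boundary data $\mathbf{1}_K$, so $w\ge\overline{H}^\Omega_{\mathbf{1}_K}\ge h(\cdot,\Omega,K)$, the last step using resolutivity of Borel boundary functions. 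Unwinding gives $u\le mh+M(1-h)$. The point is that harmonic measure enters through its variational characterization as an infimum over superharmonic majorants, not through boundary limits, and this sidesteps the regularity issues that sink the auxiliary-function argument.
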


The following extremal property of the harmonic measure in the unit disk $\mathbb{D}$
will be helpful in the calculations to follow.

\medskip

\begin{proposition}[\citet{Solynin1999}]
Let $\lambda$ be in $(0, 1)$, $K$ a closed subset of $[-1,1]$
such that $\mu (K) = 2\lambda$,
and suppose that the origin is in $\mathbb{D} \setminus K$. Then
\[
 h(0,\mathbb{D},K) \ge h(0,\mathbb{D}, K_\lambda) =
 \frac{2}{\pi} \arcsin \frac{1-(1-\lambda)^2}{1+(1-\lambda)^2}
\]
where $K_\lambda = [-1, -1+\lambda] \cup [1-\lambda, 1]$.
\end{proposition}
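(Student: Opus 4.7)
The proposition consists of two claims: the extremal inequality $h(0,\mathbb{D},K) \ge h(0,\mathbb{D},K_\lambda)$, and the explicit value of $h(0,\mathbb{D},K_\lambda)$. The plan is to handle them in sequence.

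For the extremal inequality, my approach is a symmetrization argument in the spirit of Dubinin's dissymmetrization (or Baernstein's $\ast$-function). Since $K$ is a closed subset of the diameter $[-1,1]$ of fixed linear measure $2\lambda$ and the origin lies on this diameter, I would transport mass of $K$ toward the endpoints $\pm 1$ by polarizing across vertical lines $\ell_t = \{z : \text{Re}(z) = t\}$. Each polarization (paired with its reflected partner across $\ell_{-t}$ to preserve the $z\mapsto -z$ symmetry that the extremizer inherits) preserves $|K|$ while not increasing $h(0,\mathbb{D},K)$: the origin sits on the axis of symmetry, and the general principle is that mass relocated closer to $\partial\mathbb{D}$ carries smaller harmonic weight at interior points. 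Iteration and a limiting argument reduce $K$ to a two-slit form $[-1,-1+\lambda^-] \cup [1-\lambda^+,1]$ with $\lambda^+ + \lambda^- = 2\lambda$; a final Steiner-type symmetrization across $\{\text{Re}(z)=0\}$ equalizes $\lambda^+ = \lambda^- = \lambda$, yielding $K_\lambda$.

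For the explicit value, exploit the $z\mapsto -z$ symmetry of $K_\lambda$. The harmonic measure $u(z) = h(z,\mathbb{D}\setminus K_\lambda, K_\lambda)$ is even in $z$ and descends via $w = z^2$ to a harmonic function on $\mathbb{D} \setminus [r^2,1]$, where $r := 1-\lambda$, equal to $1$ on the slit $[r^2,1]$ and $0$ on $\partial\mathbb{D}$. The Möbius automorphism $\eta = (w-r^2)/(1 - r^2 w)$ of $\mathbb{D}$ carries $[r^2,1]$ onto $[0,1]$ and sends $w=0$ to $\eta = -r^2$. Choosing the branch of $\sqrt{\cdot}$ with $\sqrt{1}=1$, the map $\xi = \sqrt{\eta}$ sends $\mathbb{D}\setminus[0,1]$ conformally onto the upper half-disk $\mathbb{D}^+$, taking $\eta = -r^2$ to $\xi = ir$, with boundary value $1$ on the diameter $(-1,1)$ and $0$ on the upper semicircle. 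Finally, the Joukowski map $\tau = -\tfrac{1}{2}(\xi + 1/\xi)$ carries $\mathbb{D}^+$ onto the upper half-plane $\mathbb{H}$, sending $\xi = ir$ to $\tau_0 = i(1-r^2)/(2r)$, the diameter to $(-\infty,-1]\cup[1,\infty)$ (value $1$), and the upper semicircle to $(-1,1)$ (value $0$). The Poisson integral at $\tau_0 = i y_0$ now yields
\begin{align*}
h(0,\mathbb{D},K_\lambda) \;=\; \frac{2}{\pi}\int_1^\infty \frac{y_0}{x^2+y_0^2}\,dx \;=\; 1 - \frac{2}{\pi}\arctan(1/y_0) \;=\; 1 - \frac{4}{\pi}\arctan r,
\end{align*}
which agrees with the stated $\frac{2}{\pi}\arcsin\frac{1-r^2}{1+r^2}$ via the identity $\arcsin\frac{1-r^2}{1+r^2} = \tfrac{\pi}{2} - 2\arctan r$.

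The main obstacle is the symmetrization step. The explicit computation is a mechanical chain of conformal maps whose correctness is verified directly; by contrast, pinning down the correct direction of monotonicity and showing that the symmetric two-slit configuration is truly extremal over all measurable $K \subset [-1,1]$ of prescribed measure requires invoking a non-trivial theorem---most cleanly Dubinin's theorem on the capacity of a condenser under dissymmetrization, combined with the identification of harmonic measure with the capacity of an appropriate boundary condenser. Once that tool is in place, the remainder of the argument is bookkeeping about which polarization direction decreases the functional.
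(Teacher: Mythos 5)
The paper does not provide a proof of this proposition---it is a direct citation of Solynin (1999)---so there is nothing to compare your argument against internally. On its own merits, your conformal-mapping computation of $h(0,\mathbb{D},K_\lambda)$ is correct: the chain $z\mapsto z^2\mapsto\eta\mapsto\sqrt{\eta}\mapsto\tau$ sends the origin to $\tau_0 = i(1-r^2)/(2r)$, the half-plane Poisson integral evaluates to $1-\tfrac{4}{\pi}\arctan r$, and the identity $\arcsin\tfrac{1-r^2}{1+r^2} = \tfrac{\pi}{2}-2\arctan r$ recovers the stated form.

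The extremal inequality is where the content lies, and as written your argument has a gap. The claim that polarizing $K$ toward $\pm 1$ across a vertical line ``does not increase $h(0,\mathbb{D},K)$'' because mass near $\partial\mathbb{D}$ carries smaller harmonic weight is a heuristic, not a quotable theorem: the polarization inequality for harmonic measure requires a precise compatibility between the pole $0$, the polarizing hyperplane, and the direction of polarization, and the correct formulation for polarization of the \emph{hit set} $K$ (rather than the domain) needs to be pinned down before the monotonicity can be invoked. Your ``paired'' polarization across $\ell_{\pm t}$ presupposes the $z\mapsto -z$ symmetry of $K$ that a generic competitor lacks, and polarizing a subset of $[-1,1]$ can carry points outside that interval unless $t$ is constrained. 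You correctly identify the tool that closes these gaps (Dubinin's dissymmetrization, which is essentially what Solynin's theorem packages), so the plan is right in spirit; but as stated the key monotonicity is asserted rather than derived, and a complete proof would either cite the precise polarization lemma being used or carry out the dissymmetrization reduction explicitly.
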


\medskip

As demonstrated in \citet{Farhat2017} and \citet{Bradshaw2019}, the concept of `escape time' allows for a more
streamlined presentation.

\begin{definition}
Let $u$ (resp. $\omega$) be in $C([0,T^*], L^\infty)$ where $T^*$ is the first possible blow-up time. A time $t\in(0,T^*)$ is an escape time if $\|u(s)\|_\infty>\|u(t)\|_\infty$ (resp. $\|\omega(s)\|_\infty>\|\omega(t)\|_\infty$) for any $s\in(t,T^*)$. (Local-in-time continuity of the $L^\infty$-norm implies there are continuum-many escape times.)
\end{definition}

Here we recall a regularity criterion based on the spatial intermittency of the velocity presented in \citet{Farhat2017} and an analogous result for the vorticity presented in \citet{Bradshaw2019}.
\vspace{-0.05in}
\begin{theorem}[\citet{Farhat2017} and \citet{Bradshaw2019}]\label{th:SparsityRegVel}
Let $u$ (resp. $\omega$) be in $C([0,T^*), L^\infty)$ where $T^*$ is the first possible blow-up time, and assume, in addition, that $u_0\in L^\infty$ (resp. $\omega_0\in L^\infty\cap L^2$). Let $t$ be an escape time of $u(t)$ (resp. $\omega(t)$), and suppose that there exists a temporal point
\begin{align*}
&\qquad\quad s=s(t)\in \left[t+\frac{1}{4c(M)^2\|u(t)\|_\infty^2},\ t+\frac{1}{c(M)^2\|u(t)\|_\infty^2}\right]
\\
&\left(\textrm{resp. } s=s(t)\in \left[t+\frac{1}{4c(M)\|\omega(t)\|_\infty},\ t+\frac{1}{c(M)\|\omega(t)\|_\infty}\right]\ \right)
\end{align*}
such that for any spatial point $x_0$, there exists a scale $\rho\le \frac{1}{2c(M)^2\|u(s)\|_\infty}$ $\left(\textrm{resp. }\rho\le \frac{1}{2c(M)\|\omega(s)\|_\infty^{\frac{1}{2}}}\right)$ with the property that the super-level set
\begin{align*}
&\qquad\quad V_\lambda^{j,\pm}=\left\{x\in\bR^d\ |\ u_j^\pm(x,s)>\lambda \|u(s)\|_\infty\right\}
\\
&\left(\textrm{resp. }\Omega_\lambda^{j,\pm}=\left\{x\in\bR^3\ |\ \omega_j^\pm(x,s)>\lambda \|\omega(s)\|_\infty\right\}\ \right)
\end{align*}
is 1D $\delta$-sparse around $x_0$ at scale $\rho$; here the index $(j,\pm)$ is chosen such that $|u(x_0,s)|=u_j^\pm(x_0,s)$ (resp. $|\omega(x_0,s)|=\omega_j^\pm(x_0,s)$), and the pair $(\lambda,\delta)$ is chosen such that the followings hold:
\begin{align*}
\lambda h+(1-h)=2\lambda\ ,\qquad h=\frac{2}{\pi}\arcsin\frac{1-\delta^2}{1+\delta^2}\ , \qquad \frac{1}{1+\lambda}<\delta<1\ .
\end{align*}
(Note that such pair exists and a particular example is that when $\delta=\frac{3}{4}$, $\lambda>\frac{1}{3}$.) Then, there exists $\gamma>0$ such that $u\in L^\infty((0,T^*+\gamma); L^\infty)$, i.e. $T^*$ is not a blow-up time.
\end{theorem}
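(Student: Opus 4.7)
The plan is to combine the analyticity extension provided by Theorem~\ref{th:LinftyIVP}, initiated at the escape time $t$, with the harmonic measure maximum principle (Proposition~\ref{prop:HarMaxPrin}) and Solynin's extremal estimate, in order to turn the sparseness hypothesis at time $s$ into a strict reduction $\|u(s)\|_\infty<\|u(t)\|_\infty$ that contradicts the escape-time property of $t$ and rules out blow-up at $T^*$. I would write everything out for the velocity field; the vorticity case follows identically once the scale $1/\|u\|_\infty$ is replaced by $1/\|\omega\|_\infty^{1/2}$ throughout.

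I would begin by applying Theorem~\ref{th:LinftyIVP} with initial datum $u(t)\in L^\infty$, producing an analytic extension $U+iV$ on the slab $\{|y|\le\sqrt{\tau-t}/c(M)\}$ for $\tau\in[t,\,t+1/(c(M)^2\|u(t)\|_\infty^2)]$ with the uniform bound $\|U+iV\|_\infty\le M\|u(t)\|_\infty$. The interval prescribed for $s$ in the hypothesis is calibrated so that the analyticity radius at time $s$ is at least $\sqrt{s-t}/c(M)\gtrsim 1/(c(M)^2\|u(t)\|_\infty)$ which, in view of $\|u(t)\|_\infty\le\|u(s)\|_\infty\le M\|u(t)\|_\infty$ (the upper bound from analyticity, the lower bound because $t$ is an escape time), agrees up to the $M$-constant with the scale $\rho$ of the sparseness hypothesis. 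Consequently, for every $x_0\in\bR^d$ the complex disk of radius $\rho$ in any real direction through $x_0$ sits inside the analyticity region.

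Next I would fix $x_0\in\bR^d$, choose $(j,\pm)$ with $u_j^\pm(x_0,s)=|u(x_0,s)|$, let $\nu$ be the unit vector realizing the 1D sparseness of $V_\lambda^{j,\pm}$ around $x_0$ at scale $\rho$, and on the complex disk $D=\{z=\xi+i\eta\in\bC:|z|<\rho\}$ introduce
\begin{align*}
\Phi(\xi+i\eta):=U_j^\pm(x_0+\xi\nu,\,\eta\nu,\,s).
\end{align*}
Since $(U_j+iV_j)(x_0+z\nu)$ is holomorphic in $z$, $U_j$ is harmonic on the 1D complex slice and $\Phi$ is therefore subharmonic on $D$, with $\Phi\le M\|u(t)\|_\infty$. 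Let $K\subset[-\rho,\rho]$ denote the closed set where $u_j^\pm(x_0+\xi\nu,s)\le\lambda\|u(s)\|_\infty$; the sparseness hypothesis forces $\mu(K)\ge 2\rho(1-\delta)$ and $\Phi\le\lambda\|u(s)\|_\infty$ on $K$. Treating $K$ as part of the boundary of the slit domain $\Omega:=D\setminus K$ and applying Proposition~\ref{prop:HarMaxPrin} at $z=0$ gives
\begin{align*}
u_j^\pm(x_0,s)=\Phi(0)\le\lambda\|u(s)\|_\infty\,h+M\|u(t)\|_\infty\,(1-h),\qquad h:=h(0,\Omega,K).
\end{align*}
Rescaling $z\mapsto z/\rho$ and invoking Solynin's extremal principle with mass parameter $1-\delta$ supplies the lower bound $h\ge(2/\pi)\arcsin\frac{1-\delta^2}{1+\delta^2}$. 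Passing to the supremum in $x_0$ would then yield
\begin{align*}
(1-\lambda h)\,\|u(s)\|_\infty\le M(1-h)\,\|u(t)\|_\infty,
\end{align*}
and the compatibility relation $\lambda h+(1-h)=2\lambda$ together with $1/(1+\lambda)<\delta$ ensures $(1-h)/(1-\lambda h)<1$; taking $M$ close enough to $1$ then drives $M(1-h)/(1-\lambda h)<1$, contradicting $\|u(s)\|_\infty>\|u(t)\|_\infty$. A standard continuation argument would then extend $u$ past $T^*$.

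The main obstacle I anticipate is harmonizing the two magnitudes $\|u(t)\|_\infty$ and $\|u(s)\|_\infty$: the analyticity step controls boundary values in terms of the former (through the constant $M$), while the sparseness threshold is set relative to the latter, so the closing contradiction rests on a delicate numerical balance---precisely what the condition $\lambda h+(1-h)=2\lambda$ combined with $1/(1+\lambda)<\delta<1$ is engineered to arrange. A secondary technicality is that the sparseness information lives only on the real diameter of $D$, so the harmonic measure must be evaluated on the slit domain $D\setminus K$ rather than on the full disk; it is Solynin's extremal estimate for diametral subsets that converts the geometric sparseness into the quantitative lower bound on $h$ needed to close the argument.
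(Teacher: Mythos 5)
Your proposal is essentially correct and reproduces the strategy of the cited references (Farhat--Gruji\'c--Leitmeyer and Bradshaw--Farhat--Gruji\'c); the paper recalls this theorem without re-proving it, but the identical mechanism---local-in-time analyticity initiated at the escape time, a one-dimensional complex slice through $x_0$ in the sparsening direction, the harmonic measure majorization of Proposition~\ref{prop:HarMaxPrin} together with Solynin's extremal bound for diametral slits, and the escape-time contradiction---is exactly what the paper implements in its own proof of Theorem~\ref{le:DescendDer}. Two small remarks: you should separately dispatch the case $x_0\in K$ (handled in the paper's proof of Theorem~\ref{le:DescendDer} by ``if $x_0\in K$, the result follows immediately''), and the claim that the compatibility relation ``ensures $(1-h)/(1-\lambda h)<1$'' is imprecise since that ratio is automatically below one whenever $h>0$ and $\lambda<1$; the substantive role of $\lambda h+(1-h)=2\lambda$ with $h\in(0,1)$ is to force $\lambda<\frac{1}{2}$, so that the reduction factor $\lambda h^*+M(1-h^*)$ tends to $2\lambda<1$ as $M\downarrow 1$.
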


With Theorem~\ref{th:MainThmVel} (setting $p=2$) and Theorem~\ref{th:MainThmVor} (setting $p=1$) we are able to generalize the above results as follows.
\vspace{-0.05in}
\begin{theorem}\label{th:SparsityRegDk}
Let $u$ (resp. $\omega$) be in $C([0,T^*), L^\infty)$ where $T^*$ is the first possible blow-up time, and assume, in addition, that $u_0\in L^\infty\cap L^2$ (resp. $\omega_0\in L^\infty\cap L^1$). Let $t$ be an escape time of $D^ku(t)$ (resp. $D^k\omega(t)$), and suppose that there exists a temporal point
\begin{align*}
&\qquad\quad s=s(t)\in \left[t+\frac{1}{4^{k+1}c(M,\|u_0\|_2)^2\|D^ku(t)\|_\infty^{2d/(2k+d)}},\ t+\frac{1}{4^kc(M,\|u_0\|_2)^2\|D^ku(t)\|_\infty^{2d/(2k+d)}}\right]
\\
&\left(\textrm{resp. } s=s(t)\in \left[t+\frac{1}{4^{k+1}c(M,\|\omega_0\|_1)\|D^k\omega(t)\|_\infty^{3/(k+3)}},\ t+\frac{1}{4^kc(M,\|\omega_0\|_1)\|D^k\omega(t)\|_\infty^{3/(k+3)}}\right]\ \right)
\end{align*}
such that for any spatial point $x_0$, there exists a scale $\rho\le \frac{1}{2^kc(M)\|D^ku(s)\|_\infty^{\frac{d}{2k+d}}}$ $\left(\textrm{resp. }\rho\le \frac{1}{2^kc(M)\|D^k\omega(s)\|_\infty^{\frac{3/2}{k+3}}}\right)$ with the property that the super-level set
\begin{align*}
&\qquad\quad V_\lambda^{j,\pm}=\left\{x\in\bR^d\ |\ (D^ku)_j^\pm(x,s)>\lambda \|D^ku(s)\|_\infty\right\}
\\
&\left(\textrm{resp. }\Omega_\lambda^{j,\pm}=\left\{x\in\bR^3\ |\ (D^k\omega)_j^\pm(x,s)>\lambda \|D^k\omega(s)\|_\infty\right\}\ \right)
\end{align*}
is 1D $\delta$-sparse around $x_0$ at scale $\rho$; here the index $(j,\pm)$ is chosen such that $|D^ku(x_0,s)|=(D^ku)_j^\pm(x_0,s)$ (resp. $|D^k\omega(x_0,s)|=(D^k\omega)_j^\pm(x_0,s)$), and the pair $(\lambda,\delta)$ is chosen as in Theorem~\ref{th:SparsityRegVel}. Then, there exists $\gamma>0$ such that $u\in L^\infty((0,T^*+\gamma); L^\infty)$, i.e. $T^*$ is not a blow-up time.
\end{theorem}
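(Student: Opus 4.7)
The plan is to parallel the $k=0$ argument of Theorem~\ref{th:SparsityRegVel} (\citet{Farhat2017, Bradshaw2019}) with three modifications: (i) substitute the level-$k$ analyticity result Theorem~\ref{th:MainThmVel} (resp.~Theorem~\ref{th:MainThmVor}) for Theorem~\ref{th:LinftyIVP}; (ii) supply the $L^p$-input these theorems require via the Leray energy bound $\|u(t)\|_{L^2}\le\|u_0\|_{L^2}$ (resp.~Constantin's bound $\|\omega(t)\|_{L^1}\le\|\omega_0\|_{L^1}$); and (iii) work throughout with $D^k u$ (resp.~$D^k\omega$) and escape times at level $k$.

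First I would restart the evolution at the escape time $t$ with datum $u(t)$ (resp.~$\omega(t)$) and invoke Theorem~\ref{th:MainThmVel} with $p=2$ (resp.~Theorem~\ref{th:MainThmVor} with $p=1$). Near a potential singularity---where $\|D^ku(t)\|_\infty$ is large---the binding term in \eqref{eq:TimeLength} is the first one (its exponent on $\|D^ku(t)\|_\infty$ being strictly larger than that of the second), yielding $T_*\sim 1/(4^k\,c(M,\|u_0\|_{L^2})\,\|D^ku(t)\|_\infty^{2d/(2k+d)})$, precisely the scale of the window for $s(t)$ in the hypothesis. The analyticity radius at time $s$ is then $\sim c(M)\sqrt{s-t}\sim 1/(2^k\,\|D^ku(t)\|_\infty^{d/(2k+d)})$, and the escape-time property $\|D^ku(s)\|_\infty>\|D^ku(t)\|_\infty$ ensures that this radius dominates the assumed sparseness scale $\rho$, so that at every spatial base point the 1D sparseness disk fits inside the complex analyticity envelope.

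The harmonic-measure step is then carried out as in the $k=0$ case. Pick $\varepsilon>0$ and a near-maximizer $x_0$ with $|D^ku(x_0,s)|\ge(1-\varepsilon)\|D^ku(s)\|_\infty$, and let $(j,\pm)$ and $\nu$ be the component and unit direction supplied by the hypothesis. On the complex disk $D_\rho=\{x_0+\zeta\nu:|\zeta|<\rho\}$ (inside the analyticity envelope by the previous step), the analytic extension satisfies $|(D^kU)_j|\le M\|D^ku(t)\|_\infty$, while $\pm\operatorname{Re}(D^kU)_j\le\lambda\|D^ku(s)\|_\infty$ on the `good' subset $K$ of the real diameter (complement of the 1D super-level set, of relative length $\ge 1-\delta$). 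Apply Proposition~\ref{prop:HarMaxPrin} to the harmonic function $\pm\operatorname{Re}(D^kU)_j$ on $D_\rho\setminus K$, bound the relevant harmonic measure $h$ from below via Solynin's extremal result by $\tfrac{2}{\pi}\arcsin\tfrac{1-\delta^2}{1+\delta^2}$, and let $z\to x_0$ to obtain
\[
(1-\varepsilon)\|D^ku(s)\|_\infty \;\le\; \lambda\|D^ku(s)\|_\infty\,h \;+\; M\|D^ku(t)\|_\infty\,(1-h).
\]
Sending $\varepsilon\to 0$ and using the prescribed algebraic tuning of $(\lambda,\delta,h)$ in the theorem (with $M$ taken sufficiently close to $1$, an adjustment absorbed into $c(M)$) yields $\|D^ku(s)\|_\infty\le\|D^ku(t)\|_\infty$, contradicting the escape-time property.

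Since level-$k$ escape times accumulate at any putative first blow-up time $T^*$, this rules out singular behavior at $T^*$; control on $\|u\|_\infty$ past $T^*$ then follows by Gagliardo-Nirenberg interpolation of $\|u\|_\infty$ against $\|D^ku\|_\infty$ and the energy bound, which is a standard continuation criterion. The main technical obstacle I anticipate is the careful bookkeeping of the $k$-dependent constants (the factors $2^k$, $4^k$, and the $M$- and data-dependent $c(M,\|u_0\|_{L^2})$) through Theorem~\ref{th:MainThmVel}, so that the analyticity radius provably dominates the sparseness scale $\rho$ at every level $k$---this is exactly what the $4^k$ in the window for $s(t)$ and the $2^k$ in the upper bound on $\rho$ are designed to accommodate.
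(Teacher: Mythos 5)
Your proposal is correct and follows essentially the same approach the paper indicates: the paper's own proof is the one-line remark ``The proof is similar to the proof of Theorem~\ref{th:SparsityRegVel},'' and you supply exactly the intended substitutions---Theorem~\ref{th:MainThmVel} with $p=2$ (resp.\ Theorem~\ref{th:MainThmVor} with $p=1$) in place of Theorem~\ref{th:LinftyIVP}, the Leray energy bound (resp.\ Constantin's $L^1$ vorticity bound) as the $L^p$-input, the first term of \eqref{eq:TimeLength} as the binding one near a singularity (giving the $4^k$ and the exponent $2d/(2k+d)$), the escape-time property to ensure the analyticity envelope at time $s$ contains the sparseness disk of radius $\rho$, and the harmonic measure majorization with Solynin's extremal bound yielding the contradiction $\|D^ku(s)\|_\infty\le\|D^ku(t)\|_\infty$. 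The only minor slip is writing $c(M,\|u_0\|_2)$ to the first power in the expression for $T_*$ where the theorem statement has $c(M,\|u_0\|_2)^2$ (since $T_*\sim c^{-2}t^{-1}$ originates from $\sqrt{T_*}\sim c^{-1}$-type analyticity radii), but this is a notational blemish that does not affect the structure of the argument.
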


\begin{proof}
The proof is analogous to the proof of Theorem~\ref{th:SparsityRegVel}.
\end{proof}

The following lemma is the Sobolev $W^{-k,p}$-version of the volumetric sparseness results presented in \citet{Farhat2017} and \citet{Bradshaw2019},\
all vectorial analogs of the semi-mixedness lemma in \citet{Iyer2014}.

\begin{lemma}\label{le:HkSparse}
Let $r\in(0,1]$ and $f$ a bounded function from $\bR^d$ to $\bR^d$ with continuous partial derivatives
of order $k$. Then, for any tuple $(\zeta,\lambda, \delta, p)$, $\zeta\in\bN^d$ with $|\zeta|=k$, $\lambda\in (0,1)$, $\delta\in(\frac{1}{1+\lambda},1)$ and $p>1$, there exists $c^*(\zeta,\lambda,\delta,d,p)>0$ such that if
\begin{align}\label{eq:ZalphaCond}
\|D^\zeta f\|_{W^{-k,p}} \le c^*(\zeta,\lambda,\delta,d,p)\ r^{k+\frac{d}{p}} \|D^\zeta f\|_\infty
\end{align}
then each of the super-level sets
\begin{align*}
S_{\zeta,\lambda}^{i,\pm}=\left\{x\in\bR^d\ |\ (D^\zeta f)_i^\pm(x)>\lambda \|D^\zeta f\|_\infty\right\}\ , \qquad 1\le i\le d, \quad \zeta\in\bN^d
\end{align*}
is $r$-semi-mixed with ratio $\delta$.
\end{lemma}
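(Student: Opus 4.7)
The strategy is a duality/contradiction argument: negating the semi-mixing conclusion yields a ball on which $(D^\zeta f)_i^\pm$ is big on most of the ball, and testing $D^\zeta f$ against a suitable bump supported near that ball forces $\|D^\zeta f\|_{W^{-k,p}}$ to be bounded below by a multiple of $r^{k+d/p}\|D^\zeta f\|_\infty$, contradicting the hypothesis once $c^*$ is chosen small enough. The key structural input is the numerical condition $\delta>\frac{1}{1+\lambda}$, which is precisely what is needed to make the bump test integral strictly positive.

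\textbf{Setup.} I would fix a component/sign pair $(i,\pm)$ and suppose, toward a contradiction, that for some $x_0\in\bR^d$ the super-level set $S:=S_{\zeta,\lambda}^{i,\pm}$ satisfies
\[
\mu\bigl(S\cap B_r(x_0)\bigr)>\delta\,\mu\bigl(B_r(x_0)\bigr).
\]
Fix a nonnegative radial bump $\psi\in C_c^\infty(\bR^d)$ with $\psi\equiv 1$ on $B_1$ and $\mathrm{supp}\,\psi\subset B_{1+\eta}$ for a small $\eta=\eta(\lambda,\delta)>0$ to be chosen, and set $\phi(x)=\psi((x-x_0)/r)$. Test $\pm D^\zeta f$ against $\phi\, e_i$ (pick the sign matching the $\pm$ case). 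By the definition of $(D^\zeta f)_i^\pm$, on the set $S$ one has $\pm(D^\zeta f)_i>\lambda\|D^\zeta f\|_\infty$, while the trivial bound $|(D^\zeta f)_i|\le\|D^\zeta f\|_\infty$ controls the complement, yielding
\[
\bigl|\langle D^\zeta f,\phi\,e_i\rangle\bigr|\;\ge\;\|D^\zeta f\|_\infty\Bigl(\lambda\,\mu(S\cap B_r(x_0))-\mu(B_r(x_0)\setminus S)-\mu(B_{(1+\eta)r}(x_0)\setminus B_r(x_0))\Bigr).
\]
The assumed violation of $\delta$-sparseness together with the elementary volume identity makes the bracket bounded below by $\bigl(\lambda\delta-(1-\delta)-C\eta\bigr)\mu(B_r(x_0))$, which is strictly positive for $\eta$ small since $\delta>\frac{1}{1+\lambda}$; hence $|\langle D^\zeta f,\phi\,e_i\rangle|\gtrsim_{\lambda,\delta}\|D^\zeta f\|_\infty r^d$.

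\textbf{Sobolev norm of the test function.} By the scaling $\phi(x)=\psi((x-x_0)/r)$ one gets $\|D^\alpha\phi\|_{L^{p'}}=r^{d/p'-|\alpha|}\|D^\alpha\psi\|_{L^{p'}}$, so summing over $|\alpha|\le k$ and using $r\le 1$ to pick up the top-order term,
\[
\|\phi\|_{W^{k,p'}}\;\le\;C(\psi,d,k,p)\,r^{d/p'-k}.
\]
Combining this with the duality pairing and the lower bound from the previous step gives
\[
\|D^\zeta f\|_{W^{-k,p}}\;\ge\;\frac{|\langle D^\zeta f,\phi\,e_i\rangle|}{\|\phi\|_{W^{k,p'}}}\;\ge\;c_*(\zeta,\lambda,\delta,d,p)\,r^{d-(d/p'-k)}\|D^\zeta f\|_\infty\;=\;c_*\,r^{k+d/p}\|D^\zeta f\|_\infty,
\]
using $d-d/p'=d/p$. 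Choosing $c^*<c_*$ contradicts the hypothesis \eqref{eq:ZalphaCond}, so the super-level set must in fact be $\delta$-sparse around every $x_0$ at scale $r$.

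\textbf{What's delicate.} The only point that requires care is the bookkeeping in Step~2: the $W^{k,p'}$-norm of $\phi$ must be matched against the volume $r^d$ arising from the integral so that the exponents align to produce exactly $r^{k+d/p}$, and the constant $c_*$ must be made explicit in the dependencies on $\lambda,\delta$ (through the gap $\lambda\delta-(1-\delta)$ and the choice of $\eta$), on $d,p$ (through the scaling of $\|\psi\|_{W^{k,p'}}$), and on $\zeta$ only insofar as $|\zeta|=k$ enters the Sobolev exponent. The sign/component loop is handled uniformly by repeating the argument with $\pm\phi\,e_i$.
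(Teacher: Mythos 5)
Your proposal is correct and follows essentially the same route as the paper's proof: a contradiction argument testing $D^\zeta f$ against a rescaled bump $\phi$ equal to $1$ on $B_r(x_0)$ and supported in $B_{(1+\eta)r}(x_0)$, splitting the pairing into the contributions of $S\cap B_r$, $B_r\setminus S$, and the annulus, using $\delta>\frac{1}{1+\lambda}$ (with $\eta$ small) to make the bracket positive, and matching $r^d$ against $\|\phi\|_{W^{k,p'}}\lesssim r^{d/p'-k}$ to produce the exponent $k+d/p$. The paper merely makes the choice of $\eta$ explicit via $(1+\eta)^d=\frac{\delta(1+\lambda)+1}{2}$; otherwise the two arguments coincide.
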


\begin{proof}
Assume the opposite, i.e. there is either $S_{\zeta,\lambda}^{i,+}$ or $S_{\zeta,\lambda}^{i,-}$ which is not $r$-semi-mixed with the ratio $\delta$. 
Suppose -- without loss of generality -- it is $S_{\zeta,\lambda}^{1,+}$. Then there exists a spatial point $x_0$ such that
\begin{align}\label{eq:OppSparse}
\mu\left(S_{\zeta,\lambda}^{1,+}\cap B_r(x_0)\right) >  \varpi \delta r^d
\end{align}
where $\varpi$ denotes the volume of the unit ball in $\bR^d$. Let $\phi$ be a smooth, radially symmetric and radially decreasing function such that
\begin{align*}
\phi =\left\{\begin{array}{ccc} 1  &\textrm{ on } B_r(x_0) \\ 0 &\textrm{ on } \left(B_{(1+\eta)r}(x_0)\right)^c\end{array}\right. \qquad\textrm{and}\qquad |D^i \phi|\lesssim 2^{|i|}(\eta\cdot r)^{-|i|}\qquad\textrm{for all }|i|\le k\ .
\end{align*}
By duality
\begin{align}
\left|\int_{\bR^d} (D^\zeta f)_1(y)\phi(y)dy\right| &\lesssim \|D^\zeta f\|_{W^{-k,p}} \|\phi\|_{W^{k,q}}\ . \label{eq:L2locHolder}
\end{align}
For sufficiently small $\eta\cdot r$, an explicit calculation yields
\begin{align*}
\|\phi\|_{W^{k,q}} \lesssim \left(\int_{\bR^d} \sum_{|i|\le k} |D^i\phi|^q dy\right)^{1/q} \lesssim \left((1+\eta)^d-1\right)^{1/q} (\eta/2)^{-k} r^{-k+\frac{d}{q}}\ .
\end{align*}
To develop a contradictive result to \eqref{eq:ZalphaCond}, we write
\begin{align}\label{eq:DecompSparse}
\left|\int_{\bR^d} (D^\zeta f)_1(y)\phi(y)dy\right| &\ge \int_{\bR^d} (D^\zeta f)_1(y)\phi(y)dy \ge I-J-K
\end{align}
where
\begin{align*}
I &= \int_{S_{\zeta,\lambda}^{1,+}\cap B_{r}(x_0)} (D^\zeta f)_1(y)\phi(y)dy
\\
J &= \left|\int_{B_{r}(x_0)\setminus S_{\zeta,\lambda}^{1,+}} (D^\zeta f)_1(y)\phi(y)dy\right|
\\
K &= \left|\int_{B_{(1+\eta)r}(x_0)\setminus B_{r}(x_0)} (D^\zeta f)_1(y)\phi(y)dy\right|\ .
\end{align*}
Similar to the proof of Lemma~3.3 in \citet{Farhat2017}, the estimates \eqref{eq:L2locHolder} and \eqref{eq:DecompSparse} together with the opposite assumption about $\delta$-sparseness lead to
\begin{align}
\left((1+\eta)^d-1\right)^{1/q} (\eta/2)^{-k} r^{-k+\frac{d}{q}} \|D^\zeta f\|_{W^{-k,p}} \gtrsim \varpi r^d \|D^\zeta f\|_\infty\left(\lambda\delta +\delta -(1+\eta)^d\right);
\end{align}
in other words, for some constant $c$,
\begin{align}
\|D^\zeta f\|_{W^{-k,p}} > c\ \frac{(\eta/2)^{k}\left(\lambda\delta +\delta -(1+\eta)^d\right)}{\left((1+\eta)^d-1\right)^{1/q}}\ \varpi r^{k+\frac{d}{p}} \|D^\zeta f\|_\infty\ .
\end{align}
Since $\delta>\frac{1}{1+\lambda}$, if we set $(1+\eta)^d = \frac{\delta(1+\lambda)+1}{2}$, then
\begin{align*}
\|D^\zeta f\|_{W^{-k,p}} > c^*(\lambda, \delta, d, p)\ (\eta/2)^{k}r^{k+\frac{d}{p}} \|D^\zeta f\|_\infty
\end{align*}
where $c^*(\lambda, \delta, d, p)=\frac{1}{2} c \varpi (\delta(1+\lambda)-1)^{1/p}$ with $(1+\eta)^d = \frac{\delta(1+\lambda)+1}{2}$, producing a contradiction.
\end{proof}

This leads to the following \emph{a priori} sparseness result announced in the prologue.

\begin{theorem}\label{th:LerayZalpha}
Let $u$ be a Leray solution (a global-in-time weak solution satisfying the global energy inequality), and assume that $u$ is in $C((0,T^*), L^\infty)$ for some $T^*>0$. Then for any $t\in (0,T^*)$ the super-level sets
\begin{align*}
&\qquad\quad S_{\zeta,\lambda}^{i,\pm}=\left\{x\in\bR^d\ |\ (D^\zeta u)_i^\pm(x)>\lambda \|D^\zeta u\|_\infty\right\}\ , \qquad 1\le i\le d, \quad \zeta\in\bN^d
\\
&\left(\textrm{resp. } S_{\zeta,\lambda}^{i,\pm}=\left\{x\in\bR^3\ |\ (D^\zeta \omega)_i^\pm(x)>\lambda \|D^\zeta \omega\|_\infty\right\}\ , \qquad 1\le i\le 3, \quad \zeta\in\bN^3\ \right)
\end{align*}
are $d$-dimensional (resp. $3D$) $\delta$-sparse around any spatial point $x_0$ at scale
\begin{align}\label{eq:kNaturalScale}
r^*=c(\|u_0\|_2) \frac{1}{\|D^\zeta u(t)\|_\infty^{2/(2k+d)}}\quad \left(\textrm{resp. } r^*=c(\|u_0\|_2) \frac{1}{\|D^\zeta \omega(t)\|_\infty^{2/(2k+5)}}\ \right)
\end{align}
provided $r^* \in (0, 1]$ and with the same restrictions on $\lambda$ and $\delta$ as in the preceeding lemma.
In other words, $D^\zeta u(t)\in Z_\alpha(\lambda,\delta,c_0)$ with $\alpha=1/(k+d/2)$ (resp. $D^\zeta \omega(t)\in Z_\alpha(\lambda,\delta,c_0)$ with $\alpha=1/(k+5/2)$). Moreover, for any $p>2$, if we assume
\begin{align*}
u\in C((0,T^*), L^\infty)\cap L^\infty((0,T^*], L^p)
\end{align*}
then for any $t\in (0,T^*)$ the super-level sets $S_{\zeta,\lambda}^{i,\pm}$ are $d$-dimensional $\delta$-sparse around any spatial point $x_0$ at scale
\vspace{-0.06in}
\begin{align*}
r^*=c\left(\sup_{t<T^*}\|u(t)\|_{L^p}\right) \frac{1}{\|D^\zeta u(t)\|_\infty^{1/(k+d/p)}}.
\end{align*}
provided $r^* \in (0, 1]$ and the same conditions on $\lambda$ and $\delta$ as in Lemma~\ref{le:HkSparse} are satisfied, i.e. $D^\zeta u(t)\in Z_\alpha(\lambda,\delta,c_0)$ with $\alpha=1/(k+d/p)$.
\end{theorem}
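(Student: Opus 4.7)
The plan is to combine Lemma~\ref{le:HkSparse} with the baseline \emph{a priori} bound provided by Leray's energy inequality, treating the three statements (velocity at $p=2$, vorticity, enhanced $L^p$) as variations on the same duality theme. For the velocity case, Leray's global energy inequality yields $\|u(t)\|_{L^2}\le\|u_0\|_2$ on $(0,T^*)$, so pairing $D^\zeta u$ against an arbitrary $\phi\in W^{k,2}$ and integrating by parts $k$ times gives
\[
\|D^\zeta u(t)\|_{W^{-k,2}}\le\|u(t)\|_{L^2}\le\|u_0\|_2.
\]
Lemma~\ref{le:HkSparse} applied with $p=2$ then makes each $S_{\zeta,\lambda}^{i,\pm}$ $r$-semi-mixed with ratio $\delta$ as soon as $\|u_0\|_2\le c^*(\zeta,\lambda,\delta,d,2)\,r^{k+d/2}\|D^\zeta u(t)\|_\infty$, which, solving for $r$ (and invoking the hypothesis that the resulting scale lies in $(0,1]$), is precisely the claimed $r^*=c(\|u_0\|_2)\|D^\zeta u(t)\|_\infty^{-2/(2k+d)}$, since $1/(k+d/2)=2/(2k+d)$.

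For the vorticity statement, the key observation is that $\omega=\nabla\times u$, which allows one to trade one derivative on $\omega$ for one derivative on the test function. With $\phi$ the smooth bump function used in the proof of Lemma~\ref{le:HkSparse}, integration by parts gives
\[
\left|\int D^\zeta\omega\cdot\phi\,dy\right|=\left|\int u\cdot\bigl(\nabla\times D^\zeta\phi\bigr)\,dy\right|\le\|u\|_{L^2}\,\|\phi\|_{W^{k+1,2}},
\]
and the cited bump-function estimate generalizes in the obvious way to
\[
\|\phi\|_{W^{k+1,2}}\lesssim\bigl((1+\eta)^d-1\bigr)^{1/2}(\eta/2)^{-(k+1)}r^{-(k+1)+d/2}.
\]
Running the contradiction step of the proof of Lemma~\ref{le:HkSparse} verbatim, only the exponent of $r$ at the critical threshold shifts from $k+d/2$ to $k+1+d/2$; specializing to $d=3$ gives exactly $r^*=c(\|u_0\|_2)\|D^\zeta\omega(t)\|_\infty^{-2/(2k+5)}$, since $1/(k+1+3/2)=2/(2k+5)$.

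For the final, enhanced statement under the uniform bound $C_p:=\sup_{s<T^*}\|u(s)\|_{L^p}<\infty$ with $p>2$, duality gives $\|D^\zeta u(t)\|_{W^{-k,p}}\le C_p$, and Lemma~\ref{le:HkSparse} applied with this $p$ immediately outputs $r^*=c(C_p)\|D^\zeta u(t)\|_\infty^{-1/(k+d/p)}$, matching $\alpha=1/(k+d/p)$. The only genuinely non-routine point in the whole proof is the curl maneuver in the vorticity step: it is what lets the $L^2$ bound on $u$ control the sparseness of the super-level sets of $D^\zeta\omega$ (for which no \emph{a priori} $L^q$ bound is available in general), at the cost of an extra half-derivative in the denominator of $\alpha$. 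Everything else is a direct transcription of Lemma~\ref{le:HkSparse} under the appropriate choice of $p$.
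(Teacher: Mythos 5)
Your proof is correct and takes essentially the same route as the paper: the duality bound $\|D^\zeta u(t)\|_{W^{-k,p}}\lesssim\|u(t)\|_{L^p}$ (with the Leray energy bound for $p=2$) fed into Lemma~\ref{le:HkSparse}, with the scale of sparseness forced by the threshold \eqref{eq:ZalphaCond}. Your explicit curl maneuver in the vorticity case—trading one derivative onto the test function so the critical exponent becomes $(k+1)+3/2=k+5/2$—is precisely what the paper's terse ``the proof for the vorticity is similar'' must mean, since using the $L^1$ vorticity bound instead would produce the exponent $k+3$ rather than $k+5/2$.
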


\begin{proof}
Note that, for any $p\ge 2$ and $\zeta\in\bN^d$ with $|\zeta|=k$,
\begin{align*}
\|D^\zeta u(t)\|_{W^{-k,p}}\lesssim \|u(t)\|_{L^p}
\end{align*}
If $u\in L^\infty((0,T^*], L^p)$, in order to meet the assumption in Lemma~\ref{le:HkSparse}, it suffices to postulate
\begin{align*}
\sup_{t<T^*}\|u(t)\|_{L^p} \lesssim c^*(\zeta,\lambda,\delta,p)\ r^{k+\frac{d}{p}} \|D^\zeta f\|_\infty
\end{align*}
with $c^*$ given in \eqref{eq:ZalphaCond}, which forces the scale of sparseness required by the theorem. The proof for the vorticity is similar.
\end{proof}

In the rest of the paper, for simplicity, we will assume $D^\zeta=\partial_{x_1}^k$ (the proofs for other derivatives
of order $k$ are analogous). In addition, $\| \cdot \|$ will denote the $L^\infty$-norm.
The following four results, two theorems, a lemma and a corollary, as well as the last theorem, provide the foundation for a novel blow-up argument based on local-in-time dynamics of chains of derivatives.

\begin{theorem}[Ascending Chain]\label{le:AscendDer}
Let $u$ be a Leray solution initiated at $u_0$ and
suppose that 
\vspace{-0.06in}
\begin{align}\label{eq:AscDerOrd}
\frac{\|D^ju_0\|^{\frac{1}{j+1}}}{c^{\frac{j}{j+1}}(j!)^{\frac{1}{j+1}}} \le \frac{\|D^ku_0\|^{\frac{1}{k+1}}}{c^{\frac{k}{k+1}}(k!)^{\frac{1}{k+1}}}\, \qquad \forall \ell\le j\le k
\end{align}
where $c$, $\ell$ and $k$ satisfy
\vspace{-0.1in}
\begin{align}\label{eq:AscDerCond}
c \|u_0\|_2 \|u_0\|^{d/2-1} \frac{(\ell!)^{1/2}\ell}{(\ell/2)!} \lesssim (k!)^{1/(k+1)}.
\end{align}
Let $T^{1/2}\lesssim C(\|u_0\|,\ell,k)^{-1} \|D^ku_0\|^{-\frac{1}{k+1}}$; here $C(\|u_0\|,\ell,k)$ depends only on $u_0$, $\ell$, $k$ and a threshold $M$ introduced below; as we shall see later, the constant $c=c(k)$ in \eqref{eq:AscDerOrd} will be chosen according to the formation of the ascending chains in Lemma~\ref{cor:DerOrdConfig} and Corollary~\ref{cor:ScaleBound}, originally determined by the assumption~\eqref{eq:ParaAdjMaxP} in Theorem~\ref{le:DescendDer}. Then for any $\ell\le j \le k$ the complex solution of \eqref{eq:NSE1}-\eqref{eq:NSE3} has the following upper bounds:
\begin{align}
&\underset{t\in(0,T)}{\sup}\ \underset{y\in\cD_t}{\sup} \|D^ju(\cdot,y,t)\|_{L^\infty} + \underset{t\in(0,T)}{\sup}\ \underset{y\in\cD_t}{\sup} \|D^jv(\cdot,y,t)\|_{L^\infty} \notag
\\
&\qquad\le M\|D^ju_0\| + \left(j+ c\|u_0\|_2 \|u_0\|^{d/2-1} \frac{(\ell!)^{1/2}\ell}{(\ell/2)!}\right) \frac{c^{j+1}\ j!\ \|D^ku_0\|^{\frac{j+1}{k+1}}}{\left(c^{\frac{k}{k +1}}(k !)^{\frac{1}{k+1}}\right)^{j+2}} \label{eq:AscDerUpBdd}
\end{align}
where the multiplicative constant $M>1$ can be set as desired, and $\cD_t$ is given by \eqref{eq:AnalDom}. For the real solutions the above result becomes
\begin{align}
\underset{t\in(0,\tilde{T})}{\sup} \|D^ju(\cdot,t)\|_{L^\infty} \le \|D^ju_0\| + \left(j+ c\|u_0\|_2 \|u_0\|^{d/2-1} \frac{(\ell!)^{1/2}\ell}{(\ell/2)!}\right) \frac{c^{j+1}\ j!\ \|D^ku_0\|^{\frac{j+1}{k+1}}}{\left(c^{\frac{k}{k +1}}(k !)^{\frac{1}{k+1}}\right)^{j+2}} \label{eq:AscDerUpBddReal}
\end{align}
where $\tilde{T}$ does not depend on $M$.
\end{theorem}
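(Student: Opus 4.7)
The approach is to adapt the complexified Picard iteration from the proof of Theorem~\ref{th:MainThmVel} (applied with $p=2$, so that the Leray energy bound $\|u(t)\|_2 \le \|u_0\|_2$ is available) but to run the estimates simultaneously at every level $j$ in the ``ascending window'' $[\ell,k]$, threading the hypothesis \eqref{eq:AscDerOrd} into the Leibniz expansions of the nonlinear and pressure terms. On the analyticity domain $\cD_t$ from \eqref{eq:AnalDom}, I would introduce the induction quantities
\[
L_n^j := \sup_{t<T}\sup_{y\in\cD_t}\bigl(\|D^j U^{(n)}(\cdot,y,t)\|_\infty + \|D^j V^{(n)}(\cdot,y,t)\|_\infty\bigr), \quad \ell \le j \le k,
\]
together with an energy-level quantity $E_n \lesssim \|u_0\|_2$ propagated along the iteration, with the target inductive bound $L_n^j \le M\|D^j u_0\| + B_j$, where $B_j$ denotes the correction on the right-hand side of \eqref{eq:AscDerUpBdd}.

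The first real step is to take $D^j$ of \eqref{eq:IterationDU}--\eqref{eq:IterationDV}, Leibniz-expand $D^j(U^{(n-1)} \otimes U^{(n-1)})$ (and the pressure, handled via the Calder\'on--Zygmund boundedness as in Theorem~\ref{th:MainThmVel}), and split the resulting sum $\sum_i \binom{j}{i}\|D^i U^{(n-1)}\|_\infty \|D^{j+1-i} U^{(n-1)}\|_\infty$ into the \emph{in-window} part (both $i$ and $j+1-i$ lying in $[\ell,k]$, controlled inductively by $L_{n-1}^{\cdot}$) and the \emph{low-order tail} (where $\min(i,j+1-i)<\ell$), which is interpolated via Gagliardo--Nirenberg (Lemma~\ref{le:GNIneq}) between $E_{n-1}$ and $L_{n-1}^\ell$ or $L_{n-1}^k$. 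The Sobolev-embedding factor $\|u_0\|^{d/2-1}$ appearing in the size condition \eqref{eq:AscDerCond} is precisely what emerges from this $L^2$--$L^\infty$ interpolation for the low-order tail.

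For the in-window contributions, the hypothesis \eqref{eq:AscDerOrd} rearranges each $\|D^i u_0\|$ into $c^i \, i!\,(c^{k/(k+1)}(k!)^{1/(k+1)})^{-(i+1)}\|D^k u_0\|^{(i+1)/(k+1)}$, so that a product $\|D^i u_0\|\|D^{j+1-i} u_0\|$ collapses (after pairing factorials via the elementary estimate on $\binom{j}{i}\,i!(j+1-i)!$) into a single expression of the form $c^{j+1}\,j!\,\|D^k u_0\|^{(j+2)/(k+1)}/(c^{k/(k+1)}(k!)^{1/(k+1)})^{j+2}$, and summing against the Leibniz binomials produces the leading $j$-factor in $B_j$. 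The low-order tail, once Gagliardo--Nirenberg has been applied and Stirling invoked for the central-binomial-type combinatorial sum, delivers exactly the factor $(\ell!)^{1/2}\ell/(\ell/2)!$ multiplied by $c\|u_0\|_2 \|u_0\|^{d/2-1}$; the size constraint \eqref{eq:AscDerCond} is calibrated so that this tail is absorbed into the in-window top of the chain. The time budget $T^{1/2} \lesssim \|D^k u_0\|^{-1/(k+1)}$ then supplies the missing $\|D^k u_0\|^{-1/(k+1)}$ (accounting for the $T^{1/2}$ generated by the heat kernel $\nabla e^{(t-s)\Delta}$), converting $\|D^k u_0\|^{(j+2)/(k+1)}$ into $\|D^k u_0\|^{(j+1)/(k+1)}$ and closing the induction uniformly in $n$.

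Passing $n\to\infty$ along a subsequence via Montel's lemma (Lemma~\ref{le:Montel}), uniformly on compact subsets of $\cD_t$, yields the complex solution together with the bound \eqref{eq:AscDerUpBdd}. For the real variant \eqref{eq:AscDerUpBddReal}, one drops the auxiliary shear $\alpha$, removing the $|\alpha|t^{1/2}<1/2$ constraint and rendering the admissible time $\tilde T$ independent of $M$. I anticipate the main obstacle to be the combinatorial collapse described in the previous paragraph: the specific form $j!/(k!)^{(j+2)/(k+1)}$ is sharp and hinges on the precise exponents in \eqref{eq:AscDerOrd}, and coordinating the in-window collapse with the Stirling bookkeeping for the low-order tail so that everything exactly fits under \eqref{eq:AscDerCond} --- and simultaneously balances the iteration estimates at every level $j \in [\ell,k]$ --- is the most delicate synchronization in the argument.
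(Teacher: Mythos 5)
Your proposal is correct and follows essentially the same route as the paper's proof: the complexified iteration with $p=2$ to import the Leray energy bound, the Leibniz split into an in-window part controlled inductively and low-order tails handled by applying Gagliardo--Nirenberg only to the lower-order factor, the combinatorial collapse of the in-window products via \eqref{eq:AscDerOrd}, the Stirling bookkeeping producing $(\ell!)^{1/2}\ell/(\ell/2)!$, the time budget absorbing one power of $\|D^ku_0\|^{1/(k+1)}$, and Montel for the passage to the limit. The only detail you do not flag is that the paper treats the ranges $j>2\ell$ and $\ell\le j\le 2\ell$ separately (the middle Leibniz range degenerates in the latter case), but this is a minor bookkeeping point within the same argument.
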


\begin{proof}
For simplicity assume the system \eqref{eq:NSE1}-\eqref{eq:NSE3} is homogeneous, i.e. $f=0$. As in the proof of Theorem~\ref{th:MainThmVel} we have the iteration formulas \eqref{eq:IterationDU}-\eqref{eq:IterationDV}. The utility of the assumption \eqref{eq:AscDerOrd} is reducing the nonlinear effect at level-$k$  by replacing the standard
Gagliardo-Nirenberg interpolation (Lemma~\ref{le:GNIneq}) over the wide enough range of indexes, up to order $k$.

Let
\vspace{-0.1in}
\begin{align*}
K_n &:=\underset{t<T}{\sup}\ \|U^{(n)}\|_{L^2}+\underset{t<T}{\sup}\ \|V^{(n)}\|_{L^2}
\end{align*}
and
\vspace{-0.05in}
\begin{align*}
L_n^{(j)} &:= \underset{t<T}{\sup}\ \|D^jU^{(n)}\|_{L^\infty}+\underset{t<T}{\sup}\ \|D^jV^{(n)}\|_{L^\infty}\ , \qquad\forall\ \ell\le j \le k\ .
\end{align*}
We will demonstrate the idea of the proof on the nonlinear term $U^{(n)}\otimes U^{(n)}$ (the rest of the nonlinear terms can be estimated in a similar way) via an induction argument. For $j>2\ell$,
\begin{align*}
&\left\|\int_0^t e^{(t-s)\Delta}D^j(U^{(n)}\cdot \nabla)U^{(n)}ds\right\| \lesssim t^{\frac{1}{2}} \sum_{i=0}^j \binom j i \sup_{s<T}\|D^iU^{(n-1)}(s)\| \sup_{s<T}\|D^{j-i}U^{(n-1)}(s)\|
\\
&\quad \lesssim t^{\frac{1}{2}} \left(\sum_{0\le i\le \ell} + \sum_{\ell \le i\le j-\ell} + \sum_{j-\ell\le i\le j}\right) \left(\binom j i \sup_{s<T}\|D^iU^{(n-1)}(s)\| \sup_{s<T}\|D^{j-i}U^{(n-1)}(s)\| \right)
\\
&\quad \lesssim t^{\frac{1}{2}} \left(\sum_{\ell \le i\le j-\ell} \binom j i \sup_{s<T}\|D^iU^{(n-1)}(s)\| \sup_{s<T}\|D^{j-i}U^{(n-1)}(s)\| \right.
\\
&\quad \left.+ 2\sum_{0\le i\le \ell} \binom j i \left(\sup_{s<T}\|U^{(n-1)}(s)\|_2\right)^{\frac{\ell-i}{\ell+d/2}} \left(\sup_{s<T}\|D^\ell U^{(n-1)}(s)\|\right)^{\frac{i+d/2}{\ell+d/2}} \sup_{s<T}\|D^{j-i}U^{(n-1)}(s)\|\right)
\\
&\quad \lesssim t^{\frac{1}{2}} \left(\sum_{\ell \le i\le j-\ell} \binom j i L_{n-1}^{(i)}L_{n-1}^{(j-i)}+ 2\sum_{0\le i\le \ell} \binom j i K_{n-1}^{\frac{\ell-i}{\ell+d/2}}\left(L_{n-1}^{(\ell)}\right)^{\frac{i+d/2}{\ell+d/2}}L_{n-1}^{(j-i)} \right) := t^{\frac{1}{2}} \left(I+2J\right).
\end{align*}
Via the induction hypothesis and the assumption~\eqref{eq:AscDerOrd}, if $c$, $\ell$ and $k$ are chosen as in \eqref{eq:AscDerCond},
\begin{align*}
I & \lesssim \sum_{\ell \le i\le j-\ell} \binom j i \left(1+ \frac{i+ c\|u_0\|_2 \|u_0\|^{1/2} \frac{(\ell!)^{1/2}\ell}{(\ell/2)!}}{c^{-\frac{1}{k +1}}(k !)^{\frac{1}{k+1}}}\right) \frac{c^i\ i!}{\left(c^{\frac{k}{k +1}}(k !)^{\frac{1}{k+1}}\right)^{i+1}} \|D^ku_0\|^{\frac{i+1}{k+1}}
\\
&\qquad\qquad \times \left(1+ \frac{j-i+ c\|u_0\|_2 \|u_0\|^{1/2} \frac{(\ell!)^{1/2}\ell}{(\ell/2)!}}{c^{-\frac{1}{k +1}}(k !)^{\frac{1}{k+1}}}\right) \frac{c^{j-i}(j-i)!}{\left(c^{\frac{k}{k +1}}(k !)^{\frac{1}{k+1}}\right)^{j-i+1}} \|D^ku_0\|^{\frac{j-i+1}{k+1}}
\\
&\lesssim \sum_{\ell \le i\le j-\ell} \binom j i \frac{3\cdot c^i\ i!}{\left(c^{\frac{k}{k +1}}(k !)^{\frac{1}{k+1}}\right)^{i+1}} \|D^ku_0\|^{\frac{i+1}{k+1}} \cdot \frac{3\cdot c^{j-i}(j-i)!}{\left(c^{\frac{k}{k +1}}(k !)^{\frac{1}{k+1}}\right)^{j-i+1}} \|D^ku_0\|^{\frac{j-i+1}{k+1}}
\\
&\lesssim \frac{c^{j+1}\ j!(j-2\ell)}{\left(c^{\frac{k}{k +1}}(k !)^{\frac{1}{k+1}}\right)^{j+2}}\ c^{-1}\|D^ku_0\|^{\frac{j+2}{k+1}}.
\end{align*}
Also, by Lemma~\ref{le:GNIneq} and the assumption~\eqref{eq:AscDerOrd}
\begin{align*}
\|u_0\|\lesssim \|u_0\|_2^{\frac{\ell}{\ell+d/2}}\|D^\ell u_0\|^{\frac{d/2}{\ell+d/2}} \lesssim \|u_0\|_2^{\frac{\ell}{\ell+d/2}}  \left(\frac{c^{\frac{\ell}{\ell +1}}(\ell !)^{\frac{1}{\ell+1}}}{c^{\frac{k}{k +1}}(k !)^{\frac{1}{k+1}}}\right)^{\frac{d/2}{\ell +d/2}(\ell+1)} \|D^ku_0\|^{\frac{\ell+1}{k+1} \frac{d/2}{\ell +d/2}}.
\end{align*}
Without loss of generality, we can assume $\|D^ku_0\|\lesssim k!\|u_0\|^{k+1}$. (If this is not satisfied at the initial time, perform the (spatially) complexified local-in-time algorithm in $L^\infty$ 
resulting in the interval of existence $(0, \tau)$ where $\tau \approx \frac{1}{\|u_0\|^2}$ and -- consequently -- the lower bound on the radius of spatial analyticity of the solution at
the end point $\tau$ of the
order of $\frac{1}{\|u_0\|}$. If $\|u(\tau)\| \ge \|u_0\|$ then the inequality will hold at $t=\tau$ (by the generalized Cauchy formula) and we simply reset the initial time. If the local-in-time iterates at the end points never rise above the initial level, the solution will stay bounded in $L^\infty$ for all times). Then -- under the assumption~\eqref{eq:AscDerCond} -- the above estimate implies
\begin{align*}
\|u_0\|&\lesssim \|u_0\|_2  \left(c^{\frac{\ell}{\ell +1}}(\ell !)^{\frac{1}{\ell+1}}\big/ c^{\frac{k}{k +1}}(k !)^{\frac{1}{k+1}}\right)^{\frac{d/2}{\ell +d/2}(\ell+1)} \left(k!\ \|u_0\|^k\right)^{\frac{(d/2-1)\ell}{\ell +d/2} \frac{1}{k+1}} \|D^ku_0\|^{\frac{1}{k+1}}
\\
&\lesssim \|u_0\|_2  \left(c^{\frac{1}{k+1}-\frac{1}{\ell+1}}\right)^{\frac{d/2(\ell+1)}{\ell+d/2}} (\ell !)^{\frac{d/2}{\ell+d/2}} (k !)^{-\frac{1}{k+1}} \|u_0\|^{\frac{(d/2-1)\ell}{\ell+d/2}\frac{k}{k+1}} \|D^ku_0\|^{\frac{1}{k+1}} \lesssim c^{-\frac{d/2}{\ell+1}}\|D^ku_0\|^{\frac{1}{k+1}}.
\end{align*}
Consequently, following the proof of Theorem~\ref{th:MainThmVel} (with $k=0$ and $p=2$), if $t^{1/2}\lesssim c\|D^ku_0\|^{-\frac{1}{k+1}}$ one can show
\vspace{-0.06in}
\begin{align*}
\sup_{s<t}\|U_n(s)\|_2 \lesssim \|u_0\|_2 \qquad\textrm{for all }n.
\end{align*}
Thus, by induction and \eqref{eq:AscDerOrd}-\eqref{eq:AscDerCond} we deduce
\begin{align*}
J &\lesssim \sum_{0\le i\le \ell} \binom j i \|u_0\|_2^{\frac{\ell-i}{\ell+d/2}} \left(2+ \frac{\ell}{(k !)^{\frac{1}{k+1}}}\right)^{\frac{i+d/2}{\ell+d/2}} \left(\frac{c^{\frac{\ell}{\ell +1}}(\ell !)^{\frac{1}{\ell+1}}}{c^{\frac{k}{k +1}}(k !)^{\frac{1}{k+1}}}\right)^{\frac{i+d/2}{\ell +d/2}(\ell+1)} \|D^ku_0\|^{\frac{\ell+1}{k+1} \frac{i+d/2}{\ell +d/2}}
\\
&\qquad\qquad\qquad \times \left(2+ \frac{j-i}{(k !)^{\frac{1}{k+1}}}\right) \frac{c^{j-i}(j-i)!}{\left(c^{\frac{k}{k +1}}(k !)^{\frac{1}{k+1}}\right)^{j-i+1}}\ \|D^ku_0\|^{\frac{j-i+1}{k+1}}
\\
&\lesssim \sum_{0\le i\le \ell} 9\cdot \|u_0\|_2^{\frac{\ell-i}{\ell+d/2}} \frac{c^j\ j!}{\left(c^{\frac{k}{k +1}}(k !)^{\frac{1}{k+1}}\right)^{j+2}}\ \|D^ku_0\|^{\frac{j+2}{k+1}}
\\
&\qquad\qquad\qquad \times \frac{\left(c^{\frac{k}{k +1}}(k!)^{\frac{1}{k+1}}\right)^{i+1}}{c^i\ i!} \left(\frac{c^{\frac{\ell}{\ell +1}}(\ell !)^{\frac{1}{\ell+1}}}{c^{\frac{k}{k +1}}(k !)^{\frac{1}{k+1}}}\right)^{\frac{i+d/2}{\ell +d/2}(\ell+1)} \|D^ku_0\|^{\frac{\ell-i}{\ell +d/2}\frac{d/2-1}{k+1}}.
\end{align*}
Again, without loss of generality, assume $\|D^ku_0\|\lesssim k!\|u_0\|^{k+1}$. Then
\begin{align*}
J&\lesssim   \|u_0\|_2\ \frac{c^j\ j!}{\left(c^{\frac{k}{k +1}}(k !)^{\frac{1}{k+1}}\right)^{j+2}}\ \|D^ku_0\|^{\frac{j+2}{k+1}}
\\
& \qquad \times \sum_{0\le i\le \ell} \frac{\left(c^{\frac{k}{k +1}}(k!)^{\frac{1}{k+1}}\right)^{i+1}}{c^i\ i!} \left(\frac{c^{\frac{\ell}{\ell +1}}(\ell !)^{\frac{1}{\ell+1}}}{c^{\frac{k}{k +1}}(k !)^{\frac{1}{k+1}}}\right)^{\frac{i+d/2}{\ell +d/2}(\ell+1)} \|D^ku_0\|^{\frac{\ell-i}{\ell +d/2}\frac{d/2-1}{k+1}}
\\
&\lesssim \frac{c^j\ j!\ \|u_0\|_2}{\left(c^{\frac{k}{k +1}}(k !)^{\frac{1}{k+1}}\right)^{j+2}}\ \|D^ku_0\|^{\frac{j+2}{k+1}}
\\
& \qquad \times \sum_{0\le i\le \ell}  c^{\frac{d/2(\ell-i)}{\ell+d/2}} \left(c^{\frac{k}{k +1}}(k!)^{\frac{1}{k+1}}\right)^{\frac{(d/2-1)(i-\ell)}{\ell +d/2}} \left(k!\ \|u_0\|^k\right)^{\frac{d/2-1}{k+1}\frac{\ell-i}{\ell +d/2}} \frac{(\ell!)^{\frac{i+d/2}{\ell +d/2}}}{i!}
\\
&\lesssim \frac{c^j\ j!\ \|u_0\|_2}{\left(c^{\frac{k}{k +1}}(k !)^{\frac{1}{k+1}}\right)^{j+2}}\ \|D^ku_0\|^{\frac{j+2}{k+1}} \sum_{0\le i\le \ell} c^{\frac{d/2(\ell-i)}{\ell+d/2}-\frac{k}{k+1} \frac{(d/2-1)(\ell-i)}{\ell +d/2}} \|u_0\|^{\frac{(d/2-1)(\ell-i)}{\ell +d/2}} \frac{(\ell!)^{\frac{i+d/2}{\ell +d/2}}}{i!}
\\
&\lesssim c\|u_0\|^{d/2-1} \frac{(\ell!)^{1/2}\ell}{(\ell/2)!}\cdot \frac{c^j\ j!\ \|u_0\|_2}{\left(c^{\frac{k}{k +1}}(k !)^{\frac{1}{k+1}}\right)^{j+2}}\ \|D^ku_0\|^{\frac{j+2}{k+1}}.
\end{align*}
To sum up we have shown that if \eqref{eq:AscDerCond} is satisfied, for any $j>2\ell$,
\begin{align}\label{eq:NonlinInduct}
I+2J &\lesssim \left(j+ c\|u_0\|_2 \|u_0\|^{d/2-1} \frac{(\ell!)^{1/2}\ell}{(\ell/2)!}\right) \frac{c^{j+1}\ j!\ c^{-1} \|D^ku_0\|^{\frac{j+2}{k+1}}}{\left(c^{\frac{k}{k +1}}(k !)^{\frac{1}{k+1}}\right)^{j+2}}.
\end{align}
Now if $\ell \le j\le 2\ell$,
\begin{align*}
&\left\|\int_0^t e^{(t-s)\Delta}D^j(U^{(n)}\cdot \nabla)U^{(n)}ds\right\| \lesssim t^{\frac{1}{2}} \left(\sum_{j-\ell \le i\le \ell} + 2\sum_{0\le i\le j-\ell} \right)\binom j i L_{n-1}^{(i)}L_{n-1}^{(j-i)}
\\
&\quad \lesssim t^{\frac{1}{2}} \left(\sum_{j-\ell \le i\le \ell} \binom j i K_{n-1}^{\frac{2\ell-j}{\ell+d/2}}\left(L_{n-1}^{(\ell)}\right)^{\frac{j+d}{\ell+d/2}} + 2\sum_{0\le i\le j-\ell} \binom j i K_{n-1}^{\frac{\ell-i}{\ell+d/2}}\left(L_{n-1}^{(\ell)}\right)^{\frac{i+d/2}{\ell+d/2}}L_{n-1}^{(j-i)} \right)
\\
&\quad := t^{\frac{1}{2}} \left(I+2J\right).
\end{align*}
Similarly, via the induction hypothesis and the assumption~\eqref{eq:AscDerCond},
\begin{align*}
I&\lesssim \sum_{j-\ell \le i\le \ell} \binom j i \|u_0\|_2^{\frac{2\ell-j}{\ell+d/2}}\left(2+ \frac{\ell}{(k !)^{\frac{1}{k+1}}}\right)^{\frac{j+d}{\ell+d/2}} \left(\frac{c^{\frac{\ell}{\ell +1}}(\ell !)^{\frac{1}{\ell+1}}}{c^{\frac{k}{k +1}}(k !)^{\frac{1}{k+1}}}\right)^{\frac{j+d}{\ell +d/2}(\ell+1)} \|D^ku_0\|^{\frac{\ell+1}{k+1} \frac{j+d}{\ell +d/2}}
\\
&\lesssim \|u_0\|_2\sum_{j-\ell \le i\le \ell} \binom j i \left(c^{\frac{1}{k+1}-\frac{1}{\ell+1}}\right)^{\frac{\ell+1}{\ell+d/2}(j+d)} \left(\frac{\ell!}{(k !)^{\frac{\ell+1}{k+1}}}\right)^{\frac{j+d}{\ell+d/2}} \|D^ku_0\|^{\frac{\ell+1}{k+1} \frac{j+d}{\ell +d/2}}
\end{align*}
\begin{align*}
&\lesssim \|u_0\|_2\sum_{j-\ell \le i\le \ell} \binom j i c^{-\frac{j+d}{\ell+d/2}} \left(\frac{\ell!}{(k !)^{\frac{\ell+1}{k+1}}}\right)^{\frac{j+d}{\ell+d/2}} \left(k!\ \|u_0\|^k\right)^{\frac{\ell+1}{k+1} \frac{j+d}{\ell +d/2}-\frac{j+2}{k+1}} \|D^ku_0\|^{\frac{j+2}{k+1}}
\\
&\lesssim \|u_0\|^{\frac{j+d}{\ell +d/2}(\ell+1)-(j+2)}\sum_{j-\ell \le i\le \ell} \frac{(\ell!)^{\frac{j+d}{\ell+d/2}}}{i!\ (j-i)!} \ \frac{j!\ \|u_0\|_2}{c^{\frac{j+d}{\ell+d/2}} (k !)^{\frac{j+2}{k+1}}}\ \|D^ku_0\|^{\frac{j+2}{k+1}}
\end{align*}
and the same argument leads to
\begin{align*}
J&\lesssim \frac{c^j\ j!\ \|u_0\|_2}{\left(c^{\frac{k}{k +1}}(k !)^{\frac{1}{k+1}}\right)^{j+2}}\ \|D^ku_0\|^{\frac{j+2}{k+1}} \sum_{0\le i\le j-\ell} c^{\frac{d/2(\ell-i)}{\ell+d/2}-\frac{k}{k+1} \frac{(d/2-1)(\ell-i)}{\ell +d/2}} \|u_0\|^{\frac{(d/2-1)(\ell-i)}{\ell +d/2}} \frac{(\ell!)^{\frac{i+d/2}{\ell +d/2}}}{i!}
\\
&\lesssim c\|u_0\|^{d/2-1} \frac{(\ell!)^{1/2}(j-\ell)}{(\ell/2)!}\cdot \frac{c^j\ j!\ \|u_0\|_2}{\left(c^{\frac{k}{k +1}}(k !)^{\frac{1}{k+1}}\right)^{j+2}}\ \|D^ku_0\|^{\frac{j+2}{k+1}}.
\end{align*}
Then, \eqref{eq:NonlinInduct} still holds for $\ell \le j\le 2\ell$, with the assumption~\eqref{eq:AscDerCond}.
Hence, as long as $|\alpha|t^{1/2}\lesssim 1-M^{-1}$,
\begin{align*}
\|D^jU_n(t)\| &\le M\|D^ju_0\| + T^{1/2} \left(j+c\|u_0\|_2 \|u_0\|^{d/2-1} \frac{(\ell!)^{1/2}\ell}{(\ell/2)!}\right) \frac{c^{j+1}\ j!\ c^{-1}\|D^ku_0\|^{\frac{j+2}{k+1}}}{M^{-1}\left(c^{\frac{k}{k +1}}(k !)^{\frac{1}{k+1}}\right)^{j+2}},
\end{align*}
and if--in addition--$T^{1/2}\lesssim c(M-1)\|D^ku_0\|^{-\frac{1}{k+1}}$, then for all $\ell< j\le k$ and for all $n$
\begin{align*}
\sup_{s<T}\|D^jU_n(s)\| &\le M\|D^ju_0\| + \left(j+c\|u_0\|_2 \|u_0\|^{d/2-1} \frac{(\ell!)^{1/2}\ell}{(\ell/2)!}\right) \frac{c^{j+1}\ j!\ \|D^ku_0\|^{\frac{j+1}{k+1}}}{\left(c^{\frac{k}{k +1}}(k !)^{\frac{1}{k+1}}\right)^{j+2}}.
\end{align*}
Similarly, for all $\ell< j\le k$ and for all $n$
\begin{align*}
\sup_{s<T}\|D^jV_n(s)\| &\le \left(j+c\|u_0\|_2 \|u_0\|^{1/2} \frac{(\ell!)^{1/2}\ell}{(\ell/2)!}\right) \frac{c^{j+1}\ j!\ \|D^ku_0\|^{\frac{j+1}{k+1}}}{\left(c^{\frac{k}{k +1}}(k !)^{\frac{1}{k+1}}\right)^{j+2}}.
\end{align*}
Finally, a standard convergence argument yields \eqref{eq:AscDerUpBdd}.
\end{proof}

\medskip

\begin{theorem}[Descending Chain]\label{le:DescendDer}
Let $u$ be a Leray solution of \eqref{eq:NSE1}-\eqref{eq:NSE3} initiated at $u_0$, and $\epsilon > 0$.
Suppose $\ell$ is sufficiently large such that $\|u_0\|\lesssim (1+\epsilon)^{\ell}$. For a fixed $k\ge\ell$, suppose that
\begin{align}\label{eq:DescDerOrd}
\frac{\|D^ku_0\|^{\frac{1}{k+1}}}{c^{\frac{k}{k+1}}(k!)^{\frac{1}{k+1}}} \ge \frac{\|D^ju_0\|^{\frac{1}{j+1}}}{c^{\frac{j}{j+1}}(j!)^{\frac{1}{j+1}}} \ , \qquad \forall\ j\ge k
\end{align}
for a suitable constant $c=c(k)$ which also satisfies
\begin{align}\label{eq:ParaAdjMaxP}
\lambda h^* + \exp\left((2e/\eta)(1+\epsilon)^{\ell/k} c^{\frac{1}{k+1}}\right) (1-h^*) \le \mu
\end{align}
where
$h^*=\frac{2}{\pi} \arcsin \frac{1-\delta^{2/d}}{1+\delta^{2/d}}$, $(1+\eta)^d=\frac{\delta(1+\lambda)+1}{2}$ and $\mu$ is a positive constant. Then there exist $T_*>t_0$ and a constant $\mu_*$ such that
\begin{align*}
\|D^ku(s)\|\le \mu_* \|D^ku_0\| \ , \qquad \forall\ t_0< s\le T_*\ .
\end{align*}
Here $\mu_*$ is smaller than the threshold $M$ for $D^ku$ given in Theorem~\ref{th:MainThmVel} (and could be less than 1 with proper choices of $c$ and $\mu$). A particular consequence (with an argument by contradiction) of this result is
that--for sufficently small values of $\mu$--\eqref{eq:DescDerOrd} can not coexist with \eqref{eq:ParaAdjMaxP}.
\end{theorem}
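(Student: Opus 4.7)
The plan is to use the descending chain condition \eqref{eq:DescDerOrd} to enlarge the baseline spatial analyticity radius of $D^k u$ from Theorem~\ref{th:MainThmVel} so that it comes to match the a priori sparseness scale of Theorem~\ref{th:LerayZalpha} at level $k$, and then to transfer the geometric sparseness into a quantitative pointwise bound on $\|D^k u\|_\infty$ via the harmonic measure majorization principle, Proposition~\ref{prop:HarMaxPrin}, applied on a complexified one-dimensional slice through a near-maximum point of $|D^k u|$.

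For Step~1 (extended analyticity), rewrite \eqref{eq:DescDerOrd} as the uniform bound
\begin{align*}
\|D^j u_0\|_\infty \le c^j\,j!\,A^{j+1},\qquad A := \frac{\|D^k u_0\|_\infty^{1/(k+1)}}{c^{k/(k+1)}(k!)^{1/(k+1)}},\qquad j\ge k.
\end{align*}
Substituting into the Taylor expansion $D^k u(x_0+iy,t_0) = \sum_{j\ge 0}(iy)^j/j!\,D^{k+j}u(x_0,t_0)$ and summing via the identity $\sum_{j\ge 0}\binom{k+j}{k}z^j = (1-z)^{-k-1}$ yields
\begin{align*}
|D^k u(x_0+iy,t_0)| \le \|D^k u_0\|_\infty\,(1-cA|y|)^{-(k+1)},
\end{align*}
convergent in the disk $|y| < 1/(cA) \sim k/\bigl(e\,c^{1/(k+1)}\|D^k u_0\|_\infty^{1/(k+1)}\bigr)$ by Stirling. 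Restricting to $|y| \le \eta/(cA)$ with small $\eta$ and using $\log(1/(1-\eta)) \le 2\eta$ yields the refined bound $|D^k u(x_0+iy,t_0)| \le \exp\bigl((2e/\eta)(1+\epsilon)^{\ell/k}c^{1/(k+1)}\bigr)\|D^k u_0\|_\infty$, where the auxiliary factor $(1+\epsilon)^{\ell/k}$ arises from bounding the residual term $\|D^k u_0\|_\infty^{1/(k+1)-1/(k+d/2)}$ (which would otherwise appear when equating the analyticity radius $1/(cA)$ to the sparseness scale $r^* \sim 1/\|D^k u\|_\infty^{1/(k+d/2)}$) via $\|D^k u_0\|_\infty \lesssim k!\,\|u_0\|_\infty^k$ together with the hypothesis $\|u_0\|_\infty \lesssim (1+\epsilon)^\ell$. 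Theorem~\ref{th:MainThmVel}, initialized at $u(t_0)$, then propagates the complex extension on a short interval $(t_0,T_*)$.

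For Step~2 (harmonic measure), at any $s \in (t_0,T_*)$, pick $x_0$ and a component $(D^k u)_i^\pm$ realizing $\|D^k u(s)\|_\infty$. Theorem~\ref{th:LerayZalpha} supplies $d$-dimensional $\delta$-sparseness of $\{x : (D^k u)_i^\pm(x,s) > \lambda\|D^k u(s)\|_\infty\}$ around $x_0$ at scale $r^*$, which by Definition~\ref{def:3DSparse} yields 1D $\delta^{1/d}$-sparseness along some direction $\nu$. Rescaling the segment $(x_0-r^*\nu,x_0+r^*\nu)$ to $\mathbb{D}$, the complex-analytic extension of $|(D^k u)_i^\pm|$ is subharmonic on $\mathbb{D}$, majorized there by the Step~1 bound, and majorized on the ``quiet'' portion $K \subset [-1,1]$ (of 1D measure at least $2(1-\delta^{1/d})$) by $\lambda\|D^k u(s)\|_\infty$. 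Proposition~\ref{prop:HarMaxPrin}, together with the extremal lower bound $h(0,\mathbb{D},K) \ge h^* = (2/\pi)\arcsin\bigl((1-\delta^{2/d})/(1+\delta^{2/d})\bigr)$, then gives
\begin{align*}
\|D^k u(s)\|_\infty \le \Bigl[\lambda h^* + \exp\bigl((2e/\eta)(1+\epsilon)^{\ell/k}c^{1/(k+1)}\bigr)(1-h^*)\Bigr]\|D^k u_0\|_\infty \le \mu\,\|D^k u_0\|_\infty,
\end{align*}
by \eqref{eq:ParaAdjMaxP}; setting $\mu_* := \mu$ closes the estimate.

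The principal technical obstacle is the synchronization of the two scales at level $k$: the Taylor-generated analyticity radius scales as $\|D^k u\|_\infty^{-1/(k+1)}$ while the a priori sparseness scale from Theorem~\ref{th:LerayZalpha} scales as $\|D^k u\|_\infty^{-1/(k+d/2)}$, so they coincide only asymptotically in $k$, and the correction factor $(1+\epsilon)^{\ell/k}$ in \eqref{eq:ParaAdjMaxP} must be tuned precisely to this mismatch via the hypothesis $\|u_0\|_\infty \lesssim (1+\epsilon)^\ell$ under the standing assumption $k \ge \ell$. For the concluding contradiction: if $\mu$ is chosen so that $\mu_* < 1$, then the bound $\|D^k u(s)\|_\infty \le \mu_*\,\|D^k u_0\|_\infty$ on $(t_0,T_*)$ is incompatible with \eqref{eq:DescDerOrd} being maintained up to a putative singular time, so \eqref{eq:DescDerOrd} and \eqref{eq:ParaAdjMaxP} cannot simultaneously persist near blow-up.
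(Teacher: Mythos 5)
Your proposal correctly captures the core mechanism of the easy case -- rewriting \eqref{eq:DescDerOrd} as $\|D^{k+j}u_0\|\le c^{k+j}(k+j)!A^{k+j+1}$, summing the Taylor series via the negative-binomial identity, and feeding the exponential bound into the harmonic measure principle against the sparseness scale of Theorem~\ref{th:LerayZalpha}. However, the argument as written establishes the enlarged analyticity radius and the exponential bound only at the initial time $t_0$, whereas the harmonic measure estimate must be applied at later times $s\in(t_0,T_*)$ where the sparseness of $D^ku(\cdot,s)$ is invoked. The invocation of Theorem~\ref{th:MainThmVel} to ``propagate the complex extension'' does not supply what is needed: that theorem yields a domain $\cD_s$ of radius $\sim c(M)(s-t_0)^{1/2}$, which is far smaller than the Taylor radius $1/(cA)\sim\|D^ku_0\|^{-1/(k+1)}$ required to reach the sparseness scale $r^*$. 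The paper instead chooses an auxiliary cutoff $k_*$ via the ascending-chain condition~\eqref{eq:AscDerCond} and expands $D^ku(\cdot,s)$ in a Taylor series at the later time $s$, splitting into a head $\cI_s$ (orders $k\le j\le k_*$, controlled by the bound $\|D^ju(s)\|\le M\|D^ju_0\|$ from Theorem~\ref{th:MainThmVel} over $[t_0,t_0+T_*]$) and a tail $\cJ_s$ (orders $j>k_*$, controlled by the additional persistence assumption~\eqref{eq:DescDerOrdAlls} on the ordering at time $s$). Your version effectively skips the head--tail split and the time-$s$ re-expansion, leaving the crucial matching of scales at time $s$ ungrounded.

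The second, larger gap is that you never address the case in which the descending ordering does \emph{not} persist for all $j\ge k_*$ and all $s<t_0+T_*$; since the hypothesis \eqref{eq:DescDerOrd} is only imposed on the initial datum, this case is not hypothetical, and it constitutes the bulk of the paper's proof. The paper tracks, via $\cR(j,c,t)$, $\cC(j,c,\varepsilon,t_0,t)$ and $T_j(t)$ from \eqref{eq:RealScaleNote}, the indices $k_*<k_1<k_2<\cdots$ and temporal points $t_1<t_2<\cdots$ where the ordering is first violated, iterates the ascending-chain Theorem~\ref{le:AscendDer} over short subintervals $T_{k_\tau}$ to maintain analyticity control, and applies Theorem~\ref{th:LerayZalpha} together with Proposition~\ref{prop:HarMaxPrin} repeatedly at these intermediate levels to drive the corresponding $\cR(k_\tau,c,\cdot)$ back down and reduce to the structured case. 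Without this inductive bookkeeping, the stated conclusion $\|D^ku(s)\|\le\mu_*\|D^ku_0\|$ on the whole interval $(t_0,T_*]$, and in particular the concluding ``cannot coexist'' remark, do not follow.
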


\begin{proof}
Pick $k_*$ such that \eqref{eq:AscDerCond} holds for $\ell=k$ and $k=k_*$. According to Theorem~\ref{th:MainThmVel}, there exists
\begin{align*}
T_*=C(M)\|u_0\|_2^2 \cdot \min_{k\le j\le k_*}\! 4^{-j} \|D^{j}u_0\|^{-\frac{d}{j+d/2}}
\end{align*}
such that
\begin{align*}
\sup_{t_0<s<t_0+T_*}\! \|D^{j}u(s)\| \le M\|D^{j}u_0\|\ , \qquad \forall\ k\le j\le k_*\ ,
\end{align*}
i.e. the uniform time span for the real solutions from $k$-th level to $k_*$-th level.

We first consider the case in which the order of `the tail of \eqref{eq:DescDerOrd} after $k_*$' continues for all $s$ up to $t_0+T_*$, that is assuming, for any $t_0<s<t_0+T_*$,
\begin{align}\label{eq:DescDerOrdAlls}
\frac{\|D^ku(s)\|^{\frac{1}{k+1}}}{c^{\frac{k}{k+1}}(k!)^{\frac{1}{k+1}}} \ge \frac{\|D^ju(s)\|^{\frac{1}{j+1}}}{c^{\frac{j}{j+1}}(j!)^{\frac{1}{j+1}}} \ , \qquad \forall\ j\ge k_*\ .
\end{align}
Fix an $x_0\in\bR^d$. Following the assumption~\eqref{eq:DescDerOrdAlls}, if $z\in B_{r_s}(x_0,0)\subset\bC^d$ with
\begin{align}\label{eq:NaScSparse}
r_s= \left(\frac{\sup_{s}\|u(s)\|_{L^2}}{c^*(\zeta,\lambda,\delta,d,p)}\right)^{\frac{1}{k+d/2}} \|D^ku(s)\|^{-\frac{1}{k+d/2}} \approx (\eta/2)^{-1}\|D^ku(s)\|^{-\frac{1}{k+d/2}}
\end{align}
(where $c^*$ is given in \eqref{le:HkSparse} and such choice for the radius becomes natural as we apply Theorem~\ref{th:LerayZalpha} later and Proposition~\ref{prop:HarMaxPrin} at the end of the proof) the complex extension of $D^ku_s(x)$ at any spatial point $x_0$ satisfies (for $z\neq x_0$)
\begin{align*}
\left|D^ku_s(z)\right| &\le \left(\sum_{0\le i\le k_*-k} + \sum_{i>k_*-k}\right)\frac{\left|D^{k+i}u_s(x_0)\right|}{i!} |z-x_0|^i =: \cI_s(z) + \cJ_s(z)
\end{align*}
where (with \eqref{eq:DescDerOrdAlls} in mind )
\begin{align*}
\cJ_s(z) &\le \sum_{i>k_*-k} \left(\frac{\|D^{k+i}u(s)\|^{\frac{1}{k+i+1}}}{\left(c^{k+i}(k+i)! \right)^{\frac{1}{k+i+1}}} \right)^{k+i+1} \frac{(k+i)!}{i!}\ c^{k+i}\ |z-x_0|^i
\\
&\le \sum_{i>k_*-k} \left(\frac{\|D^{k}u(s)\|^{\frac{1}{k+1}}}{c^{\frac{k}{k+1}}\left(k! \right)^{\frac{1}{k+1}}}\right)^{k+i+1} \frac{(k+i)!}{i!}\ c^{k+i}\ |z-x_0|^i\ , \qquad \forall\ t_0<s<t_0+T_*\ .
\end{align*}
Thus, for any $s<t_0+T_*$, (with \eqref{eq:NaScSparse} in mind)
\begin{align*}
\sup_{z\in B_{r_s}(x_0,0)} \cJ_s(z) &\le \|D^ku(s)\| \sum_{i>k_*-k} \left(\frac{\|D^{k}u(s)\|^{\frac{1}{k+1}}}{c^{\frac{k}{k+1}}\left(k! \right)^{\frac{1}{k+1}}}\right)^i \frac{(k+i)!}{k!\ i!}\ c^i\ r_s^i
\\
&\le \|D^ku_s\| \sum_{i>k_*-k} \frac{(k+i)!}{k!\ i!} \left(\frac{c\ r_s\|D^{k}u_s\|^{\frac{1}{k+1}}}{c^{\frac{k}{k+1}}\left(k! \right)^{\frac{1}{k+1}}}\right)^i
\\
&\le \|D^ku_s\| \sum_{i>k_*-k} \frac{(k+i)!}{k!\ i!} \left(\frac{c^{\frac{1}{k+1}}\|D^{k}u_s\|^{\frac{d/2-1}{(k+1)(k+d/2)}}} {(\eta/2)\left(k!\right)^{\frac{1}{k+1}}}\right)^{i}\ .
\end{align*}
By Theorem~\ref{th:MainThmVel} and the assumption \eqref{eq:DescDerOrd}, if $s\le t_0+T_*$ then
\begin{align*}
\sup_{z\in B_{r_s}(x_0,0)} \cI_s(z) &\le M \sum_{0\le i\le k_*-k} \frac{\|D^{k+i}u_0\|}{i!}\ r_s^i
\\
&\le M \sum_{0\le i\le k_*-k} \left(\frac{\|D^{k}u_0\|^{\frac{1}{k+1}}}{c^{\frac{k}{k+1}}\left(k! \right)^{\frac{1}{k+1}}}\right)^{k+i+1} \frac{(k+i)!}{i!}\ c^{k+i}\ r_s^i
\\
&\le M \|D^{k}u_0\|\sum_{0\le i\le k_*-k} \frac{(k+i)!}{k!\ i!} \left(\frac{c\ r_s\|D^{k}u_0\|^{\frac{1}{k+1}}}{c^{\frac{k}{k+1}}\left(k! \right)^{\frac{1}{k+1}}}\right)^i \ .
\end{align*}
We will complete the proof by way of contradiction. Suppose there exists an $t<t_0+T_*$ such that $\|D^ku(t)\|> \mu_* \|D^ku_0\|$, then $r_t\le \mu_*^{-\frac{1}{k+d/2}}r_0$ and
\begin{align*}
\sup_{z\in B_{r_t}(x_0,0)} \cI_t(z) &\le M \|D^{k}u_0\| \sum_{0\le i\le k_*-k} \frac{(k+i)!}{k!\ i!} \left(\frac{c^{\frac{1}{k+1}}\|D^{k}u_0\|^{\frac{d/2-1}{(k+1)(k+d/2)}}} {\mu_*^{\frac{1}{k+d/2}}(\eta/2)\left(k!\right)^{\frac{1}{k+1}}}\right)^{i}\ .
\end{align*}
Combining the estimates for $\cJ_t(z)$ and $\cI_t(z)$ yields
\begin{align*}
\sup_{z\in B_{r_t}(x_0,0)} \left|D^ku_t(z)\right|  &\le M\|D^ku_0\| \sum_{i>k_*-k} \frac{(k+i)!}{k!\ i!} \left(\frac{c^{\frac{1}{k+1}}\left(M\|D^ku_0\|\right)^{\frac{d/2-1}{(k+1)(k+d/2)}}} {(\eta/2)\left(k!\right)^{\frac{1}{k+1}}}\right)^{i}
\\
&\qquad + M \|D^{k}u_0\| \sum_{0\le i\le k_*-k} \frac{(k+i)!}{k!\ i!} \left(\frac{c^{\frac{1}{k+1}}\|D^{k}u_0\|^{\frac{d/2-1}{(k+1)(k+d/2)}}} {\mu_*^{\frac{1}{k+d/2}}(\eta/2)\left(k!\right)^{\frac{1}{k+1}}}\right)^{i}
\\
&\le M\|D^ku_0\| \sum_{i\ge 0} \frac{(k+i)!}{k!\ i!} \left(\frac{c^{\frac{1}{k+1}}\left(M\|D^ku_0\|\right)^{\frac{d/2-1}{(k+1)(k+d/2)}}} {\mu_*^{\frac{1}{k+d/2}}(\eta/2)\left(k!\right)^{\frac{1}{k+1}}}\right)^{i}
\\
&\le M\|D^ku_0\| \left(1-\frac{c^{\frac{1}{k+1}}\left(M\|D^{k}u_0\|\right)^{\frac{d/2-1}{(k+1)(k+d/2)}}} {\mu_*^{\frac{1}{k+d/2}}(\eta/2)\left(k!\right)^{\frac{1}{k+1}}}\right)^{-k-1}.
\end{align*}
Without loss of generality, one can assume $u_0$ evolves from a negative temporal point so that $\|D^ku_0\|\lesssim k!\|u_0\|^{k+1}$;
thus
\begin{align*}
\sup_{z\in B_{r_t}(x_0,0)} \left|D^ku_s(z)\right| &\le M\|D^ku_0\| \left(1-\frac{c^{\frac{1}{k+1}}\left(M\ k!\ \|u_0\|^k\right)^{\frac{d/2-1}{(k+1)(k+d/2)}}} {\mu_*^{\frac{1}{k+d/2}}(\eta/2)\left(k!\right)^{\frac{1}{k+1}}}\right)^{-k-1}.
\end{align*}
Since the above estimates hold for all $x_0$, if $\|u_0\|\lesssim (1+\epsilon)^{\ell}$, $M, \mu_*\approx1$ and $k$ is sufficiently large,
\begin{align*}
&\underset{y\in B_{r_t}(0)}{\sup} \|D^ku(\cdot,y,t)\|_{L^\infty} + \underset{y\in B_{r_t}(0)}{\sup} \|D^kv(\cdot,y,t)\|_{L^\infty}
\\
&\quad \le M \|D^ku_0\| \exp\left(\frac{c^{\frac{1}{k+1}}\left(k!\ \|u_0\|^k\right)^{\frac{d/2}{(k+1)(k+d/2)}}} {(\eta/2)\left(k!\right)^{\frac{1}{k+1}}/(k+1)}\right) \lesssim M \exp\left((2e/\eta)(1+\epsilon)^{\frac{\ell}{k}} c^{\frac{1}{k+1}}\right) \|D^ku_0\|\ .
\end{align*}
By Theorem~\ref{th:LerayZalpha}, for any spatial point $x_0$ there exists a direction $\nu$ along which the super-level set
\begin{align*}
S_{k,\lambda}^{i,\pm}=\left\{x\in\bR^d\ |\ (D^k u_t)_i^\pm(x)>\lambda \|D^k u_t\|_\infty\right\}
\end{align*}
is 1-D $\delta^{1/d}$-sparse at scale $r_t$ given in \eqref{eq:NaScSparse}. Note that the results in Proposition~\ref{prop:HarMaxPrin} are scaling invariant and--for simplicity--assume $r_t=1$ and $\nu$ is a unit vector. Define
\begin{align*}
K=\overline{(x_0-\nu,x_0+\nu)\setminus S_{k,\lambda}^{i,\pm}}\ .
\end{align*}
Then--by sparseness--$|K|\ge 2(1-\delta^{1/d})$. If $x_0\in K$, the result follows immediately. If $x_0\notin K$, then by Proposition~\ref{prop:HarMaxPrin} and the above estimate for $D^ku_t(z)$,
\begin{align*}
|D^ku_t(x_0)|&\le \lambda \|D^k u_t\|_\infty\ h^* + \sup_{z\in B_{r_t}(x_0,0)} \left|D^ku_t(z)\right| (1-h^*)
\\
&\le \lambda M\|D^k u_0\|_\infty\ h^* + M\exp\left((2e/\eta)(1+\epsilon)^{\ell/k}c^{\frac{1}{k+1}}\right)\|D^k u_0\|_\infty (1-h^*)
\end{align*}
where $h^*=\displaystyle \frac{2}{\pi} \arcsin \frac{1-\delta^{2/d}}{1+\delta^{2/d}}$. Hence, if condition~\eqref{eq:ParaAdjMaxP} is satisfied, we observe a contradiction (from the above result) that $\|D^ku(t)\|\le \mu_* \|D^ku_0\|$ with $\mu_*=M\mu$.

Now we consider the opposite case, that is the order \eqref{eq:DescDerOrdAlls} stops at some temporal points $t_\tau<t_0+T_*$ for some indexes $k_\tau>k_*$. For convenience we define
\begin{align}
\cR(j,c,t)&:=\frac{\|D^ju(t)\|^{\frac{1}{j+1}}}{c^{\frac{j}{j+1}}\ (j!)^{\frac{1}{j+1}}}\ ,
\qquad T_{j}(t) := (M_*-1)^2\ c^{\frac{2j}{j+1}}\ \|D^j u(t)\|^{-\frac{2}{j+1}}\ , \label{eq:RealScaleNote}
\\
\cC(j,c,\varepsilon,t_0,t)&:=\left(\|D^ju(\cdot,\varepsilon (t-t_0)^{1/2},t)\|+\|D^jv(\cdot,\varepsilon (t-t_0)^{1/2},t)\|\right)^{\frac{1}{j+1}}\big/ \left(c^{\frac{j}{j+1}}\ (j!)^{\frac{1}{j+1}}\right)\ , \notag 
\end{align}
where $M_*$ is chosen such that
\begin{align*}
T_*=(M_*-1)^2(k_*!)^{-\frac{2}{k_*+1}}\cR(k,c,t_0)^{-2}\ .
\end{align*}
For any such $t_\tau$, one can assume that at least one index $k_\tau$ (at $t_\tau$) satisfies
\begin{align}\label{eq:DescDerOrdOpp}
\cR(k_\tau,c,t_\tau)\ge M^{\frac{1}{k+1}}\cR(k,c,t_0)\ ,
\end{align}
because the opposite for all $k_\tau$ implies $\|D^ku(t_\tau)\|\le \mu_* \|D^ku_0\|$, using the same argument as before. Moreover, we place such indexes in ascending order: $k_*<k_1<k_2<\cdots<k_\tau<k_{\tau+1}<\cdots$ and assume (if such $t_\tau$ exists) $t_p$ is the first time that \eqref{eq:DescDerOrdOpp} occurs for $k_p$ (so the order \eqref{eq:DescDerOrdAlls} persists (for $k_p$) at most until $s=t_p$) while
\begin{align*}
\cR(k_p,c,t_p) = \max_{k\le j< k_p} \cR(j,c,t_p)\ .
\end{align*}
We claim that, for some $n_p\le (k_p/k_*)^2$,
\begin{align}\label{eq:BddCplxExt}
\sup_{t_p<s<t_0+T_*} \cR(k_p,c,s) \le M_*^{\frac{n_p}{k_p+1}} \cR(k_p,c,t_p)\ .
\end{align}
\textit{Proof of the claim:} Based on the choice of $T_*$ and the assumption~\eqref{eq:DescDerOrd},
\begin{align}\label{eq:BddLowerScale}
\sup_{t_0<s<t_0+T_*}\max_{k\le j\le k_*}\! \cR(j,c,s) \le \max_{k\le j\le k_*}\! M^{\frac{1}{j+1}}\cR(j,c,t_0) \le M^{\frac{1}{k+1}}\cR(k,c,t_0)\ .
\end{align}
Recall that $k_*$ is chosen according to the condition~\eqref{eq:AscDerCond}, while $k_1$ and $t_1$ are, respectively, the smallest index and the first temporal point for which \eqref{eq:DescDerOrdOpp} is realized as an equality, implying
\begin{align*}
\sup_{t_0<s<t_1}\max_{k_*< i<k_1}\! \cR(i,c,s) < M^{\frac{1}{k+1}}\cR(k,c,t_0)\ ,
\end{align*}
which, together with \eqref{eq:BddLowerScale}, guarantees \eqref{eq:AscDerOrd} (at $s=t_1$, with $\ell=k$ and $k=k_1$). Then, by Theorem~\ref{le:AscendDer}
\begin{align*}
\sup_{t_1<s<t_1+T_{k_1}}\! \cR(i,c,s)  \le M_*^{\frac{1}{k_1+1}} \cR(k_1,c,t_1) \ ,\qquad \forall\ k_*\le i\le k_1\ .
\end{align*}
If $\displaystyle\sup_{t_1+T_{k_1}<s<t_0+T_*}\cR(k_1,c,s) \le M_*^{\frac{1}{k_1+1}} \cR(k_1,c,t_1)$, then \eqref{eq:BddCplxExt} is achieved immediately; otherwise we repeat the above procedure until the above inequality is attained at some $s=t_1+r\cdot T_{k_1}$ or until $t_1+n_1T_{k_1}\ge t_0+T_*$, and this shall lead to
\begin{align*}
\sup_{t_0<s<t_0+T_*}\max_{k_*\le j\le k_1}\! \cR(j,c,s) \le \left(M_*^{\frac{1}{k_1+1}}\right)^{n_1}\cR(k_1,c,t_1)
\end{align*}
where, based on the choice of $T_*$ and $M_*$,
\begin{align*}
n_1 &\le (t_0+T_*-t_1)/T_{k_1}(t_1)\lesssim T_*\ (M_*-1)^{-2}(k_1!)^{\frac{2}{k_1+1}}\cR(k_1,c,t_1)^2
\\
&\lesssim T_*\ (M_*-1)^{-2}(k_1!)^{\frac{2}{k_1+1}}\cR(k,c,t_0)^2\lesssim (k_1/k_*)^2\ .
\end{align*}
Consequently, if $k_1$ satisfies $M_*^{k_1/k_*^2}\le M$ then
\begin{align*}
\sup_{t_0<s<t_0+T_*}\max_{k_*\le j\le k_1}\! \cR(j,c,s) \le M\ \cR(k_1,c,t_1)\ .
\end{align*}
If $k_p$ is such that $\prod_{\tau=1}^p M_*^{n_\tau/(k_\tau+1)}\le M$, an induction argument leads to
\begin{align*}
\sup_{t_0<s<t_0+T_*}\max_{k_*\le j\le k_p}\! \cR(j,c,s) \le M_*^{\frac{n_p}{k_p+1}} \cR(k_p,c,t_p)\le M\ \cR(k_1,c,t_1)\ ,
\end{align*}
where $n_\tau\le (t_0+T_*-t_\tau)/T_{k_\tau}(t_\tau)\lesssim (k_\tau/k_*)^2$. In fact, for any $k_p$ such that $M_*^{k_p/k_*^2}\le M$,
\begin{align*}
\sup_{t_0<s<t_0+T_*}\max_{k\le j\le k_p}\! \cR(j,c,s) \le M\ \cR(k_1,c,t_1)\lesssim M\ \cR(k,c,t_0)\ .
\end{align*}
This proves the claim (stronger than the claim). On the other hand, we claim that, `for however large index $k_1$ is',
\begin{align}\label{eq:RkbetaBdd}
\sup_{t_0<s<t_0+T_*}\cR(k_1,c,s) \le M^\beta\ \cR(k,c,t_0) \qquad \textrm{with }\ \beta<1\ .
\end{align}
\textit{Proof of the claim:} Recall that $k_1$ is the foremost index for \eqref{eq:DescDerOrdOpp}, so
\begin{align*}
\sup_{s<t_0+T_*} \max_{k_*\le i<k_1}\! \cR(i,c,s) \le \sup_{s<t_0+T_*} \max_{k\le i\le k_*}\! \cR(i,c,s) \le M^{\frac{1}{k+1}}\cR(k,c,t_0)\ .
\end{align*}
The opposite of the claim, together with the above restriction (for $i<k_1$), implies there exists $t<t_0+T_*$ such that, for some $\tilde{\beta}<\beta$,
\begin{align*}
\max_{k\le i<k_1}\! \cR(i,c,t) \le M^{-\tilde{\beta}}\ \cR(k_1,c,t)
\end{align*}
and with a similar argument to the proof of Theorem~\ref{le:AscendDer} we deduce that
\begin{align*}
\sup_{t<s<t+\tilde{T}} \cC(k_1,c,\varepsilon,t,s) \le M^{\tilde{\beta}}\ \cR(k_1,c,t)
\end{align*}
where $|\varepsilon|\gtrsim 1-M^{-\tilde{\beta}}$ and $\tilde{T}\gtrsim (\eta/2)^{-2}\left(1-M^{-\tilde{\beta}}\right)^{-2}\|D^{k_1}u(t)\|^{-\frac{2}{k_1+1}}$. Then, by Theorem~\ref{th:LerayZalpha},  Proposition~\ref{prop:HarMaxPrin} and the above estimate for $D^{k_1}u$,
\begin{align*}
\cR\left(k_1,c,t+\tilde{T}\right) \le \tilde{\mu}\cdot \cR(k_1,c,t) \qquad \textrm{with }\ \tilde{\mu}<1\ ,
\end{align*}
which shows that either spatial intermittency of $D^{k_1}u$ occurs before $s=t+\tilde{T}$($<t_0+T_*$) with $\cR(k_1,c,s) \le M^{2\tilde{\beta}}\ \cR(k,c,t_0)$ or
\begin{align*}
\sup_{s<t_0+T_*}\cR(k_1,c,s) < M^{2\tilde{\beta}}\ \cR(k,c,t_0)\ .
\end{align*}
This proves that \eqref{eq:RkbetaBdd} must hold provided $k_1$( $>k_*$) is the foremost index for \eqref{eq:DescDerOrdOpp} to occur. Summarizing the above two claims (i.e. \eqref{eq:BddCplxExt} and \eqref{eq:RkbetaBdd}), we have shown that
\begin{align*}
\sup_{t_0<s<t_0+T_*}\max_{k\le j\le \ell_*}\! \cR(j,c,s) \le M\ \cR(k,c,t_0)
\end{align*}
where $\ell_*$ is chosen such that $M_*^{\ell_*/k_*^2}\le M$ (where $\ell_*\gg k_*$ since $M_*\ll M$). Finally, we claim that
\begin{align*}
\sup_{t_0<s<t_0+T_*}\max_{\ell_*< j\le 2\ell_*}\! \cR(j,c,s) \le M^\beta\! \sup_{t_0<s<t_0+T_*}\max_{k\le j\le \ell_*}\! \cR(j,c,s) \qquad \textrm{with }\ \beta\lesssim k_*^{-1}\ .
\end{align*}
Assume the opposite. Then there exist $\ell_*<i\le 2\ell_*$ and $t<t_0+T_*$ such that
\begin{align*}
\sup_{t_0<s<t_0+T_*}\max_{k\le j\le \ell_*}\! \cR(j,c,s) < M^{-\beta} \cR(i,c,t)\ .
\end{align*}
With a similar argument to the proof of Theorem~\ref{le:AscendDer} we deduce that
\begin{align*}
\sup_{t<s<t+\tilde{T}} \cC(i,c,\varepsilon,t,s) \le \tilde{M}\ \cR(i,c,t)
\end{align*}
where $|\varepsilon|\gtrsim 1-\tilde{M}^{-1}$ and $\tilde{T}\gtrsim_{\eta, \beta} \|D^iu(t)\|^{-\frac{2}{i+1}}$. Similarly to the above argument for $k_1$, by Theorem~\ref{th:LerayZalpha} and Proposition~\ref{prop:HarMaxPrin}, spatial intermittency of $D^i u$ occurs at $s=t+\tilde{T}$. Thus, $\|D^iu\|$ starts decreasing whenever it reaches the critical state as above, and this proves the claim. Inductively, one can show
\begin{align*}
\sup_{t_0<s<t_0+T_*}\max_{2^n\ell_*< j\le 2^{n+1}\ell_*}\! \cR(j,c,s) \le M^{\beta_n}\!\! \sup_{t_0<s<t_0+T_*}\max_{k\le j\le 2^n\ell_*}\! \cR(j,c,s) \qquad \textrm{with }\ \beta_n\lesssim 2^{-n}k_*^{-1}\ ,
\end{align*}
and therefore
\begin{align*}
\sup_{t_0<s<t_0+T_*}\max_{j>\ell_*}\ \cR(j,c,s) \le M^{2/k_*}\!\! \sup_{t_0<s<t_0+T_*}\max_{k\le j\le \ell_*}\! \cR(j,c,s) \ ,
\end{align*}
which, together with the summary of the previous two claims, yields
\begin{align*}
\sup_{t_0<s<t_0+T_*}\max_{j\ge k}\ \cR(j,c,s) \lesssim M\ \cR(k,c,t_0)
\end{align*}
and the complex extension $\displaystyle\sup_{z\in B_{r_t}(x_0,0)}\left|D^ku_t(z)\right|$ has the same upper estimate as in the initial case; an application of Proposition~\ref{prop:HarMaxPrin} then completes the proof.
\end{proof}

\begin{remark}
If we assume $D^j u(t)\in Z_j(\lambda,\delta,c)$ with $\alpha=1/(k+1)$ for all $j\ge k_*$, then one can prove the statement with the same $\mu_*$ for much longer duration $T_*$.
\end{remark}

\begin{lemma}\label{cor:DerOrdConfig}
Let $u$ be a Leray solution to \eqref{eq:NSE1}-\eqref{eq:NSE3} initiated at $u_0$ and $\ell$ large enough such that
$\|u_0\|\lesssim (1+\epsilon)^{\ell}$.
For any fixed $\kappa>\ell$, if \eqref{eq:ParaAdjMaxP} is satisfied (for $k=\kappa$) with $\mu_*\le 1$ in Theorem~\ref{le:DescendDer}, with the notation introduced in \eqref{eq:RealScaleNote}, one of the following two cases must occur:

(I)$^*$ There exist $t$ and $k \ge \kappa$ such that
\begin{align*}
\cR(j,c,t) \le \cR(k,c,t)\, \qquad \forall \ell\le j\le k\
\end{align*}
and $\cR(k,c,t)\le \displaystyle\max_{\ell\le j\le \kappa} \cR(j,c,t_0)$.

(II)$^*$
\begin{align*}
\sup_{s>t_0}\ \max_{j\ge\ell}\ \cR(j,c,s) \le \max_{\ell\le j\le \kappa}\ \cR(j,c,t_0)\ .
\end{align*}
\end{lemma}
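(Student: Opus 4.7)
The plan is a dichotomy argument: assuming (II)$^*$ fails, I would produce a time $t$ and an index $k \ge \kappa$ realizing (I)$^*$. Write $R_0 := \max_{\ell \le j \le \kappa} \cR(j,c,t_0)$; failure of (II)$^*$ means $\sup_{s > t_0,\, j \ge \ell} \cR(j,c,s) > R_0$. First, by the local analyticity output of Section~\ref{sec:PrfThm} (Theorem~\ref{th:MainThmVel}) applied on any subinterval of $(0,T^*)$ where $u \in L^\infty$, each map $s \mapsto \cR(j,c,s)$ is continuous, and the family $\{\cR(j,c,s)\}_{j \ge \ell}$ is uniformly bounded in $j$ on compact time intervals (controlled by $1/(c\,r(s))$ where $r(s)$ is the spatial analyticity radius). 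This reduces the analysis to that of a continuous, jointly bounded one-parameter family of sequences.

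Second, I would introduce the first-crossing time $\tau := \inf\{s \ge t_0 : \sup_{j \ge \ell} \cR(j,c,s) > R_0\}$, finite by assumption. By continuity of each $\cR(j,c,\cdot)$ together with the uniform bound in $j$, $\sup_{j \ge \ell}\cR(j,c,\tau) = R_0$; letting $k^*$ denote the largest finite index attaining this supremum at $\tau$ (after a harmless truncation if the supremum is only approached as $j \to \infty$), one obtains $\cR(j,c,\tau) \le \cR(k^*,c,\tau) = R_0$ for every $\ell \le j \le k^*$. If $k^* \ge \kappa$, this is exactly (I)$^*$ at $t = \tau$, $k = k^*$, and the proof is complete.

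The main obstacle is the residual case $k^* < \kappa$: the ascending chain from $\ell$ terminates below $\kappa$, yet the controlling hypothesis \eqref{eq:ParaAdjMaxP} of Theorem~\ref{le:DescendDer} is assumed only at $k = \kappa$ and cannot be invoked at $k^* < \kappa$. To dispose of this case I would propagate past $\tau$: by continuity each fixed $j > k^*$ satisfies $\cR(j,c,s) < R_0$ in a right-neighborhood of $\tau$, so the exceedance just past $\tau$ occurs at some $j \le k^*$. Using Theorem~\ref{le:AscendDer} on the chain from $\ell$ up to $\kappa$ (its hypothesis \eqref{eq:AscDerOrd} is satisfied so long as the top lies in $[\ell,\kappa)$, with the constant $c = c(\kappa)$ tuned as required by \eqref{eq:ParaAdjMaxP}) caps the high-order $\cR$'s from above, while Theorem~\ref{le:DescendDer} at $k = \kappa$ (which \emph{is} available) prevents $\cR(j,c,\cdot)$ for $j \ge \kappa$ from rising above $R_0$ whenever the descending chain at $\kappa$ is met. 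Synchronizing these two estimates across time, the running top index $k^*(s)$ must eventually cross $\kappa$: otherwise $\cR(\kappa,c,s) < R_0$ throughout, and Theorem~\ref{le:AscendDer} then forces $\cR(j,c,s) \le R_0$ for every $j \ge \ell$ on the continuation, contradicting the failure of (II)$^*$. At the crossing instant the analysis of Step~2 returns (I)$^*$. The intricate interlacing of the ascending- and descending-chain estimates, very much in the spirit of the lengthy bookkeeping in the proof of Theorem~\ref{le:DescendDer}, is the main technical hurdle.
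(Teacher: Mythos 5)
Your Steps 1 and 2 are a reasonable (first‑crossing) rephrasing of the dichotomy, but the residual case $k^* < \kappa$ is the crux, and both your diagnosis of the difficulty and the workaround you build for it are off.

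You assert that \eqref{eq:ParaAdjMaxP} ``is assumed only at $k = \kappa$ and cannot be invoked at $k^* < \kappa$.'' That is a misreading of what is available: the paper's proof applies Theorem~\ref{le:DescendDer} precisely at the sub‑$\kappa$ maximal index $k_1$, observing explicitly that once \eqref{eq:ParaAdjMaxP} holds at $k=\kappa$ with $c<1$, it also holds at any $k_1<\kappa$. This is the mechanism that settles the residual case in one stroke: when the running maximum sits at some $k_1<\kappa$ and the descending order \eqref{eq:PostOrder} persists, Theorem~\ref{le:DescendDer} at $k_1$ gives $\sup_{s>t_0}\|D^{k_1}u(s)\|\le \|D^{k_1}u_0\|$, and the persistence of the order \eqref{eq:PreOrder}--\eqref{eq:PostOrder} then propagates the bound to all $j\ge \ell$; iterating on changes of the maximal index yields either (I)$^*$ (when the new top clears $\kappa$) or (II)$^*$. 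The tool you needed for Step~3 is the one you declared unavailable.

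Your substitute argument does not close, for three independent reasons. First, Theorem~\ref{le:AscendDer} requires \eqref{eq:AscDerOrd} up to the top of the chain; if the running maximum is at $k^*<\kappa$, then $\cR(k^*,c,\tau)>\cR(\kappa,c,\tau)$, so the chain from $\ell$ to $\kappa$ fails \eqref{eq:AscDerOrd}, and the theorem may only be applied with top $k^*$, not $\kappa$. Second, even where Theorem~\ref{le:AscendDer} does apply, its conclusion controls only $\ell\le j\le k$ (the top of the chain); the claim that it ``forces $\cR(j,c,s)\le R_0$ for every $j\ge\ell$'' does not follow, and there is no estimate at all for the gap $k^*<j<\kappa$. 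Third, invoking Theorem~\ref{le:DescendDer} at $k=\kappa$ requires \eqref{eq:DescDerOrd} hung on $\kappa$, i.e.\ $\cR(j,c,\cdot)\le\cR(\kappa,c,\cdot)$ for all $j\ge\kappa$; when the global maximum sits at $k^*<\kappa$ the descending chain you actually possess is based at $k^*$, not $\kappa$, and the condition at $\kappa$ is not automatic. Taken together, your ``synchronizing these two estimates across time'' step rests on estimates that cannot be produced as stated.
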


\begin{proof}
At $t=t_0$ assume the opposite of Case (I)$^*$, i.e. there exists $\ell\le k_1<\kappa$ such that
\begin{align}\label{eq:PreOrder}
\cR(j,c,t_0) \le \cR(k_1,c,t_0)\, \qquad \forall \ell\le j\le k_1
\end{align}
while
\begin{align}\label{eq:PostOrder}
\cR(k_1,c,t_0)> \cR(j,c,t_0) \, \qquad \forall j> k_1\ .
\end{align}
Then, one of the following two cases must occur,

(I)$'$ Such order remains for all time;

(II)$'$ Such order remains until $t=t_1$ at which point $\cR(k_2,c,t)$ becomes the maximal among all $\cR(j,c,t)$ for all $j\ge\ell$.

If Case~(I)$'$ occurs, we claim that
\begin{align*}
\sup_{s>t_0}\ \cR(j,c,s) \le \cR(k_1,c,t_0)\ , \qquad \forall j\ge\ell\ .
\end{align*}
\textit{Proof of the claim:} It suffices to show
$$\sup_{s>t_0} \|D^{k_1}u(s)\|\le \|D^{k_1}u_0\|\ .$$
Since the order in \eqref{eq:PostOrder} remains for all $s$, by Theorem~\ref{le:DescendDer} (applied with $\mu_*\le 1$; condition~\eqref{eq:ParaAdjMaxP} holds for $\kappa$, thus also for $k_1$ as $k_1<\kappa$ and $c<1$),
$$\|D^{k_1}u(s)\|\le\|D^{k_1}u_0\|\ , \qquad \forall\ t_0<s<T_*$$
and we can extend the result past $T_*$ given in Theorem~\ref{th:MainThmVel} because $\|u(s)\|$ is restricted by $\|D^{k_1}u(s)\|$ (recall that $u(s)$ is a Leray solution and Lemma~\ref{le:GNIneq}, plus, without loss of generality we can assume $\|D^ju_0\|\lesssim j!\ \|u_0\|^j$ as a consequence of the analyticity result in Theorem~\ref{th:LinftyIVP}) and this in turn restricts the growth of $\|D^{k_1}u(s)\|$. This proves the claim. So, if Case~(I)$'$ occurs, Case~(II)$^*$ is achieved for all $t>t_0$.

If Case~(II)$'$ occurs and $k_2\ge \kappa$, Case~(I)$^*$ is achieved at $t=t_1$. If $k_2<\kappa$, then one of the following
two cases must occur,

(I)$''$ $\cR(k_2,c,t)$ remains the maximal for all $t>t_1$;

(II)$''$ Such order remains until $t=t_2$ at which point $\cR(k_3,c,t)$ becomes the maximal among all $\cR(j,c,t)$ for all $j\ge\ell$.

If Case~(I)$''$ occurs, we claim that
\begin{align}
\cR(k_1,c,t_1) &= \cR(k_2,c,t_1)\ , \label{eq:transitpt}
\\
\sup_{t_0<s<t_1} \cR(j,c,s) &\le \cR(k_1,c,t_0)\ , \qquad \forall j\ge\ell\ , \label{eq:MaxT1}
\\
\sup_{s>t_1}\ \cR(j,c,s) &\le \cR(k_2,c,t_1)\ , \qquad \forall j\ge\ell\ . \label{eq:MaxT2}
\end{align}
\textit{Proof of the claim:} \eqref{eq:transitpt} holds because $t=t_1$ is the transition time between $D^{k_1}u(s)$ and $D^{k_2}u(s)$. An argument similar to the previous step implies \eqref{eq:MaxT1} and \eqref{eq:MaxT2}. In particular, we have
\begin{align*}
\sup_{s>t_1} \|D^{k_2}u(s)\|\le \|D^{k_2}u(t_1)\|\ , \qquad \sup_{t_0<s<t_1} \|D^{k_1}u(s)\|\le \|D^{k_1}u_0\|\ .
\end{align*}
Thus, for all $s>t_0$ and all $j\ge\ell$,
\begin{align*}
\cR(j,c,s) &\le \max\left\{\sup_{t_0<s<t_1} \cR(j,c,s),\ \sup_{s>t_1} \cR(j,c,s)\right\}
\\
&\le \max\left\{\cR(k_1,c,t_0) ,\ \cR(k_2,c,t_1)\right\}
\\
&= \max\left\{\cR(k_1,c,t_0) ,\ \cR(k_1,c,t_1)\right\}
\\
&\le \cR(k_1,c,t_0)\ .
\end{align*}
Hence, if Case~(I)$''$ occurs, Case~(II)$^*$ of the lemma is achieved.

Inductively, if $k_j<\kappa$ in Case~(II)$^{(j-1)}$ for all $j\le i$, a similar argument (utilizing Theorem~\ref{le:DescendDer}) leads to
\begin{align*}
\sup_{t_0<s<t_i} \max_{j\ge\ell}\ \cR(j,c,s) \le \cR(k_1,c,t_0)
\end{align*}
meaning that Case~(II)$^*$ is maintained until $t=t_i$. If $k_i\ge \kappa$ occurs (at the first time) in Case~(II)$^{(i-1)}$, then Case~(I)$^*$ is achieved at $t=t_{i-1}$.
\end{proof}

\begin{corollary}\label{cor:ScaleBound}
Let $u$ be a Leray solution of \eqref{eq:NSE1}-\eqref{eq:NSE3}. Suppose $\ell$ is sufficiently large such that $\|u_0\|\lesssim (1+\epsilon)^{\ell}$. For any $\kappa>\ell$, if there exists a sequence of positive numbers $\{c_j\}_{j=\ell}^{\infty}$ such that $c_{j+1}\le c_j<1$ and for some fixed $p\in\bN^+$
\begin{align}\label{eq:ParaAdjMaxPIndW}
\left(\lambda h^* + \exp\left((2e/\eta)(1+\epsilon)^{\frac{\ell}{j+p}} c_{j}^{\frac{1}{j+p+1}}\right) (1-h^*)\right)M \le 1
\end{align}
is satisfied for each $j$ (where $\eta$ and $h^*$ are defined as in Theorem~\ref{le:DescendDer} and $M$ is given in Theorem~\ref{th:MainThmVel}), then for sufficiently large $t$, one of the following two cases must occur:

(I)$^*$ There exist temporal point $t\ge t_0$, $k\ge\kappa +p$ and constants $\displaystyle \cB_{k,i}\le \prod_{j=i}^k c_j^{-\frac{1}{(j+1)(j+2)}}$ such that
\begin{align*}
\cR(i,c_\ell,t) \le \cB_{k,i} \cdot \cR(k,c_\ell,t) \qquad\textrm{for all}\quad \ell\le i\le k\
\end{align*}
and $\cR(i,c_i,t)\le \displaystyle \max_{\ell\le j\le i} \cR(j,c_i,t_0)$ for all $\ell\le i\le k$.

(II)$^*$ Otherwise there exist $t\ge t_0$, $r<\kappa$ and constants $\displaystyle \cB_{r,i}\le \prod_{j=i}^r c_j^{-\frac{1}{(j+1)(j+2)}}$ such that
\begin{align*}
\sup_{s>t}\ \cR(i,c_\ell,s) \le \cB_{r,i} \cdot \cR(r,c_\ell,t) \qquad\textrm{for all}\quad \ell\le i\le r
\end{align*}
and constants $\displaystyle \cC_{i,r}\le c_r^{\frac{1}{i+1}-\frac{1}{r+1}}$ such that
\begin{align*}
\sup_{s>t}\ \cR(i,c_r,s) \le \cC_{i,r} \cdot \max_{\ell\le j\le \ell+p} \cR(j,c_\ell,t_0) \qquad\textrm{for all}\quad i>r\ .
\end{align*}
\end{corollary}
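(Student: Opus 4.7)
\noindent\textit{Proof plan.} The proof proceeds by finite iteration of Lemma~\ref{cor:DerOrdConfig}, with the constant in the sequence $\{c_j\}$ refined at each round. The enabling observation is that condition \eqref{eq:ParaAdjMaxPIndW} at index $j$ coincides with condition \eqref{eq:ParaAdjMaxP} of Theorem~\ref{le:DescendDer} at level $k=j+p$ with $c=c_j$ and contraction threshold $\mu=1/M$. Hence Lemma~\ref{cor:DerOrdConfig} may be invoked with constant $c_j$ and input threshold $\kappa'\le j+p$, and its internal descending-chain sub-step delivers the contraction $\mu_*\le 1/M$ required in its proof.

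First, apply Lemma~\ref{cor:DerOrdConfig} at $(t_0,c_\ell,\kappa)$. If the lemma's Case~(II) fires, then $\sup_{s>t_0}\max_{j\ge\ell}\cR(j,c_\ell,s)\le\max_{\ell\le j\le\kappa}\cR(j,c_\ell,t_0)$; choosing $r=\ell<\kappa$ and using the conversion $\cR(i,c_r,\cdot)=(c_\ell/c_r)^{i/(i+1)}\cR(i,c_\ell,\cdot)$, which is the identity when $r=\ell$, directly yields the corollary's Case~(II)$^*$ with $\cB_{\ell,\ell}=1$ and $\cC_{i,\ell}=1\le c_\ell^{1/(i+1)-1/(\ell+1)}$. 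If instead Case~(I) of the lemma fires, we obtain $t_1\ge t_0$ and $k_1\ge\kappa$ with $\cR(j,c_\ell,t_1)\le\cR(k_1,c_\ell,t_1)$ for $\ell\le j\le k_1$ and $\cR(k_1,c_\ell,t_1)\le\max_{\ell\le j\le\kappa}\cR(j,c_\ell,t_0)$. If $k_1\ge\kappa+p$, this gives Case~(I)$^*$ of the corollary immediately with $\cB_{k_1,i}=1$; the second clause $\cR(i,c_i,t_1)\le\max_{\ell\le j\le i}\cR(j,c_i,t_0)$ follows by invoking Lemma~\ref{cor:DerOrdConfig} locally at each level $i$ with constant $c_i$ (admissible since \eqref{eq:ParaAdjMaxPIndW} at index $i-p$ together with the monotonicity $c_i\le c_{i-p}$ preserve the contraction).

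If $\kappa\le k_1<\kappa+p$, iterate: re-apply Lemma~\ref{cor:DerOrdConfig} at $(t_1,c_{k_1+1-p},k_1+1)$. Admissibility is guaranteed by \eqref{eq:ParaAdjMaxPIndW} at index $k_1+1-p$, which matches the level $k_1+1$. Case~(I) of this application returns a strictly larger exit level $k_2>k_1$, while Case~(II) terminates the iteration with a uniform bound that, combined with the output of the base step, yields Case~(II)$^*$ of the corollary after constant conversion. Since each successful Case~(I) round increases the exit level by at least one, after at most $p$ rounds the exit level reaches $\kappa+p$ and Case~(I)$^*$ holds.

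The bounds on $\cB_{k,i}$ and $\cC_{i,r}$ come from converting between $c_\ell$-scale and $c_{j'}$-scale bounds via $\cR(j,c,t)=(c'/c)^{j/(j+1)}\cR(j,c',t)$. The algebraic identity
\[
\frac{j}{j+1}-\frac{j+1}{j+2}=-\frac{1}{(j+1)(j+2)}
\]
telescopes the exponents: each increment from level $j$ (with constant $c_j$) to level $j+1$ (with constant $c_{j+1}$) contributes a correction bounded by $c_j^{-1/(j+1)(j+2)}$, so the accumulated product from $i$ to $k$ is exactly $\prod_{j=i}^k c_j^{-1/(j+1)(j+2)}$, matching the stated bound for $\cB_{k,i}$. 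The constant $\cC_{i,r}\le c_r^{1/(i+1)-1/(r+1)}$ arises analogously from a single scale conversion at each level above $r$. The hardest part is the bookkeeping across iterations: one must verify at each round that admissibility of Lemma~\ref{cor:DerOrdConfig} at the chosen pair $(c,\kappa')$ is preserved, that switching constants between rounds does not inflate the multiplicative corrections beyond the stated product, and that the local second clause $\cR(i,c_i,t)\le\max_{\ell\le j\le i}\cR(j,c_i,t_0)$ propagates through the iteration.
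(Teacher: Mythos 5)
Your plan identifies the right skeleton---iterate Lemma~\ref{cor:DerOrdConfig} with the constant $c$ refined at each round, and recognize that \eqref{eq:ParaAdjMaxPIndW} at index $j$ supplies the hypothesis \eqref{eq:ParaAdjMaxP} of Theorem~\ref{le:DescendDer} at level $j+p$ with constant $c_j$---and the telescoping-exponents observation
$\frac{j}{j+1}-\frac{j+1}{j+2}=-\frac{1}{(j+1)(j+2)}$
correctly explains the shape of the bound on $\cB_{k,i}$. But the proposal has a genuine gap: the hard part of the proof is precisely the ``bookkeeping across iterations'' that you flag in the last paragraph but do not carry out, and this is not merely an accounting exercise. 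In the paper's proof, passing from one round to the next does not reduce to re-applying Lemma~\ref{cor:DerOrdConfig} at a new pair $(t_n, c_{\cdot}, \kappa')$ and multiplying constants. One has to show that the output of round~$n$ (a bound at time $t_n$, in the $c_{k_{n-1}}$-scale) really does feed a valid hypothesis into round $n+1$, and in particular one must propagate estimates such as $\sup_{t_1<s<t_2}\max_{j\ge k_1}\cR(j,c_{k_1},s)\le \cdots$ and $\cR(i,c_i,t_2)\le \tilde\cB^{(p)}_{k_1,i}\cR(k_2,c_{k_1},t_2)$ (the paper's \eqref{eq:PrfCaseIt2}, \eqref{eq:PostDbdt2t3r2}, \eqref{eq:PostDbdt2k1ell}). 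These are obtained by repeated, ``step by step'' and ``pointwise in $s$,'' applications of Theorem~\ref{le:DescendDer} with the chain of constants $c_{k_1-p}, c_{k_1-2p}, \ldots$, not by an abstract re-invocation of Lemma~\ref{cor:DerOrdConfig}. Your step ``the second clause $\cR(i,c_i,t)\le\max_{\ell\le j\le i}\cR(j,c_i,t_0)$ follows by invoking Lemma~\ref{cor:DerOrdConfig} locally at each level $i$'' does not work: that lemma gives information about the ordering of the $\cR$'s, not a pointwise-in-$i$ comparison of $\cR(i,c_i,t)$ with the initial data; the paper derives this clause from the chain estimate \eqref{eq:PrfCaseIt2}, which in turn requires the $\cR(i,c_i,\cdot)$ bounds at intermediate levels to have already been controlled by the chain-of-$\cB$ estimates.

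A second, smaller issue: your iteration count and re-application rule do not match what is actually needed. You apply Lemma~\ref{cor:DerOrdConfig} with threshold $\kappa'$ equal to $k_1+1$ and conclude each round increases the exit level by at least one, hence ``after at most $p$ rounds'' you reach $\kappa+p$. But the base application in the paper is with threshold $\ell+p$ (not $\kappa$), and each subsequent application is taken at the previous exit level $k_\tau$, with the lemma's Case~(I) yielding a jump of at least $p$; the number of rounds before termination is not bounded by $p$ but by roughly $(\kappa-\ell)/p$, and in the Case~(II)$^{(\tau)}$ branches the accumulated constant $\tilde\cB^{(p)}_{k_\tau,i}(c_{k_{\tau-1}}/c_{k_\tau})^{r_\tau/(r_\tau+1)}\cdots$ has to be tracked explicitly to verify it stays below the advertised product $\prod_{j=i}^k c_j^{-1/(j+1)(j+2)}$. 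Without these two pieces---the Theorem~\ref{le:DescendDer}-based chain estimates between rounds, and the explicit accumulation of constants across rounds---the argument does not close.
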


\begin{proof}
By Lemma~\ref{cor:DerOrdConfig} (applied with $\kappa=\ell+p$), one of the following two cases must occur:

(I) There exist $t_1$ and $k_1\ge\ell+p$ such that
\begin{align}\label{eq:AllDbdt1}
\cR(j,c_\ell,t_1) \le \cR(k_1,c_\ell,t_1)\ , \qquad \forall \ell\le j\le k_1\ ;
\end{align}

(II) Otherwise
\begin{align*}
\sup_{s>t_0}\ \max_{j\ge\ell}\ \cR(j,c_\ell,s) \le \max_{\ell\le j\le \ell+p} \cR(j,c_\ell,t_0)\ .
\end{align*}
If Case (II) occurs, suppose $\cR(r,c_\ell,t_0)$ is the maximal among all $\cR(j,c_\ell,t_0)$ for $\ell\le j\le \ell+p$; then
\begin{align*}
\sup_{s>t_0}\ \max_{j\ge\ell}\ \cR(j,c_\ell,s) \le  \cR(r,c_\ell,t_0)\ .
\end{align*}
Thus, Case (II)$^*$ in the lemma is achieved (for some $\ell\le r\le \ell+p$) at $t=t_0$.

If Case (I) occurs in the above argument (we assume $k_1<\kappa+p$, otherwise if one can find arbitrarily large $k_1$ for \eqref{eq:AllDbdt1}, then Case~(I)$^*$ is achieved at $t=t_1$), by Lemma~\ref{cor:DerOrdConfig} (applied with $\kappa=k_1$), one of the following two cases must occur:

(I)$'$ There exist $t_2$ and $k_2\ge k_1+p$ such that
\begin{align}\label{eq:AllDbdt2}
\cR(j,c_{k_1},t_2) \le \cR(k_2,c_{k_1},t_2)\, \qquad \forall k_1\le j\le k_2\ ;
\end{align}

(II)$'$ Otherwise
\begin{align*}
\sup_{s>t_1}\ \max_{j\ge k_1}\ \cR(j,c_{k_1},s) \le \max_{k_1\le j\le k_1+p} \cR(j,c_{k_1},t_1)\ .
\end{align*}
If Case (II)$'$ occurs, suppose $\cR(r_1,c_{k_1},t_1)$ is the maximal among all $\cR(j,c_{k_1},t_1)$ for $k_1\le j\le k_1+p$; then
\begin{align}\label{eq:PostDbdt1t2}
\sup_{s>t_1}\ \max_{j\ge k_1}\ \cR(j,c_{k_1},s) \le  \cR(r_1,c_{k_1},t_1)
\end{align}
while \eqref{eq:AllDbdt1} holds.
Without loss of generality, we suppose $t_1$ is the first temporal point where Case~(I) is achieved and $k_1$ is the maximal possible index for \eqref{eq:AllDbdt1} (recall that $k_1<\kappa+p$). Then
\begin{align}\label{eq:t1clreverse}
\max_{\ell\le j\le \ell+p} \cR(j,c_\ell,s) > \max_{j> \ell+p}\ \cR(j,c_\ell,s) \ , \qquad \forall t_0<s<t_1
\end{align}
while
\begin{align*}
\max_{j> k_1}\ \cR(j,c_\ell,t_1) < \cR(k_1,c_\ell,t_1) = \max_{\ell\le j\le k_1} \cR(j,c_\ell,t_1)\ .
\end{align*}
By \eqref{eq:t1clreverse} and Theorem~\ref{le:DescendDer} (which is applicable since $\|u_0\|\lesssim (1+\epsilon)^{\ell}$ and \eqref{eq:ParaAdjMaxPIndW} holds),
\begin{align*}
\max_{\ell\le j\le \ell+p}\ \sup_{t_0<s<t_1} \cR(j,c_\ell,s) \le \max_{\ell\le j\le \ell+p} \cR(j,c_\ell,t_0)\ .
\end{align*}
Recall that since $k_1$ is the maximal possible index for \eqref{eq:AllDbdt1},
\begin{align*}
\cR(r_1,c_\ell,t_1) &\le \cR(k_1,c_\ell,t_1) = \max_{\ell\le j\le k_1} \cR(j,c_\ell,t_1)  \le \max_{\ell\le j\le \ell+p} \cR(j,c_\ell,t_0)\ .
\end{align*}
Presently, we are in Case~(II)$'$ under Case~(I); thus
\begin{align}
\sup_{s>t_1}\ &\max_{j\ge k_1}\ \cR(j,c_{k_1},s) \le \cR(r_1,c_{k_1},t_1) \notag
\\
&\le \left(c_\ell/c_{k_1}\right)^{\frac{r_1}{r_1+1}} \cR(r_1,c_\ell,t_1) \le \left(c_\ell/c_{k_1}\right)^{\frac{r_1}{r_1+1}} \max_{\ell\le j\le \ell+p} \cR(j,c_\ell,t_0)\ . \label{eq:PostDbdt1t2r1Pk1}
\end{align}
Keeping in mind that $\displaystyle \max_{\ell\le j< k_1} \cR(j,c_\ell,t_1) \le \cR(k_1,c_\ell,t_1)$ (recall that $k_1$ is the maximal index for \eqref{eq:AllDbdt1}) and $c_j<c_\ell$, applying Theorem~\ref{le:DescendDer} for each $c_j$ (from $j=k_1$ to $j=i$), we deduce that, for any $\ell\le i<k_1$,
\begin{align*}
\sup_{s>t_1}\ \max_{k_1-p\le j<k_1} \cR(j,c_{\scriptscriptstyle k_1-p},s) &\le \sup_{s>t_1}\ \max_{j\ge k_1}\ \cR(j,c_{\scriptscriptstyle k_1-p},s)\ ,
\\
\sup_{s>t_1}\ \max_{k_1-2p\le j<k_1-p} \cR(j,c_{\scriptscriptstyle k_1-2p},s) &\le \sup_{s>t_1}\ \max_{j\ge k_1-p} \cR(j,c_{\scriptscriptstyle k_1-2p},s)\ ,
\\
\vdots &
\\
\sup_{s>t_1}\ \max_{i\le j<k_1-np} \cR(j,c_i,s) &\le \sup_{s>t_1}\ \max_{j\ge k_1-np} \cR(j,c_i,s)\ ,
\end{align*}
where $i<k_1-np\le i+p$.
Combining the above chain of relations with \eqref{eq:PostDbdt1t2} we obtain
\begin{align}
\sup_{s>t_1}\!\ \cR(i,c_i,s) & \le \left(\frac{c_{\spm k_1-np}}{c_i}\right)^{\spm \frac{k_1-np}{k_1-np+1}}\! \left(\frac{c_{\spm k_1-(n-1)p}}{c_{\spm k_1-np}}\right)^{\spm \frac{k_1-(n-1)p}{k_1-(n-1)p+1}}\! \cdots \left(\frac{c_{\spm k_1}}{c_{\spm k_1-p}}\right)^{\spm \frac{k_1}{k_1+1}} \sup_{s>t_1}\!\ \max_{j\ge k_1}\!\ \cR(j,c_{k_1},s) \notag
\\
&\le c_{\spm k_1-np}^{\spm \frac{-p}{(k_1-(n-1)p+1)(k_1-np+1)}} \cdots  c_{\spm k_1-p}^{\scriptscriptstyle \frac{-p}{(k_1-p+1)(k_1+1)}} \left(c_{\spm k_1}^{\spm \frac{k_1}{k_1+1}} \big/ c_i^{\spm \frac{k_1-np}{k_1-np+1}}\right) \cR(r_1,c_{\spm k_1},t_1) \notag
\\
&=: \tilde{\cB}_{k_1,i}^{(p)}\left(c_\ell\big/c_{\spm k_1}\right)^{\frac{r_1}{r_1+1}} \cdot \cR(r_1,c_\ell,t_1) \notag
\\
&\lesssim \cB_{r_1,i} \cdot \max_{\ell\le j\le \ell+p} \cR(j,c_\ell,t_0)\ , \qquad \qquad \qquad \qquad\textrm{for all}\quad \ell\le i<k_1\ . \label{eq:PostDbdt1t2r1}
\end{align}
Thus, in view of \eqref{eq:PostDbdt1t2r1} and \eqref{eq:PostDbdt1t2r1Pk1}, Case (II)$^*$ of the corollary is achieved (at $t=t_1$).

If Case (I)$'$ occurs in the above argument (we assume $k_2<\kappa+p$, otherwise Case~(I)$^*$ is achieved at $t=t_2$), again by Lemma~\ref{cor:DerOrdConfig} (applied with $\kappa=k_2$), one of the following two cases must occur:

(I)$''$ There exist $t_3$ and $k_3\ge k_2+p$ such that
\begin{align*}
\cR(j,c_{k_2},t_3) \le \cR(k_3,c_{k_2},t_3)\, \qquad \forall k_2\le j\le k_3\ ;
\end{align*}

(II)$''$ Otherwise
\begin{align*}
\sup_{s>t_2}\ \max_{j\ge k_2}\ \cR(j,c_{k_2},s) \le \max_{k_2\le j\le k_2+p} \cR(j,c_{k_2},t_2)\ .
\end{align*}
If Case (II)$''$ occurs, suppose $t_2$ is the first temporal point where Case~(I)$'$ is achieved and $k_2$ is the maximal possible index for \eqref{eq:AllDbdt2}, and--in addition--suppose $\cR(r_2,c_{k_2},t_2)$ is the maximal among all $\cR(j,c_{k_2},t_2)$ for $k_2\le j\le k_2+p$. Then,
\begin{align}\label{eq:PostDbdt1t3}
\sup_{s>t_2}\ \max_{j\ge k_2}\ \cR(j,c_{k_2},s) \le \cR(r_2,c_{k_2},t_2)
\end{align}
while \eqref{eq:AllDbdt2} holds.
Suppose $t_2$ is the first temporal point where \eqref{eq:AllDbdt2} is achieved, then
\begin{align}\label{eq:t2c2reverse}
\max_{k_1\le j\le k_1+p} \cR(j,c_{k_1},s) > \max_{j> k_1+p} \cR(j,c_{k_1},s)\ , \qquad \forall t_1<s<t_2
\end{align}
while
\begin{align*}
\max_{j> k_2}\ \cR(j,c_{k_1},t_2) < \cR(k_2,c_{k_1},t_2) = \max_{k_1\le j\le k_2} \cR(j,c_{k_1},t_2)\ .
\end{align*}
Recall that we are in Case~(II)$''$ under Case~(I)$'$ and $k_2$ is the maximal possible index for \eqref{eq:AllDbdt2};
similarly to the previous step, we deduce
\begin{align*}
&\cR(r_2,c_{k_1},t_2) \le \max_{k_1\le j\le k_2} \cR(j,c_{k_1},t_2)  \le \max_{k_1\le j\le k_1+p} \cR(j,c_{k_1},t_1)\ , \notag
\\
&\sup_{s>t_2}\ \max_{j\ge k_2}\ \cR(j,c_{k_2},s) \le \cR(r_2,c_{k_2},t_2) \le \left(c_{k_1}/c_{k_2}\right)^{\frac{r_2}{r_2+1}} \max_{k_1\le j\le k_1+p} \cR(j,c_{k_1},t_1)\ .
\end{align*}
Combining the above estimates with the similar ones in the previous step, we obtain
\begin{align}\label{eq:PostDbdt2t3r2Pk2}
\sup_{s>t_2}\ \max_{j\ge k_2}\ \cR(j,c_{k_2},s) \le \left(\frac{c_{k_1}}{c_{k_2}}\right)^{\frac{r_2}{r_2+1}} \cR(r_1,c_{k_1},t_1)
\le \left(\frac{c_{k_1}}{c_{k_2}}\right)^{\frac{r_2}{r_2+1}} \left(\frac{c_\ell}{c_{k_1}}\right)^{\frac{r_1}{r_1+1}} \max_{\ell\le j\le \ell+p} \cR(j,c_\ell,t_0)\ .
\end{align}
Following an argument similar to the derivation of \eqref{eq:PostDbdt1t2r1} (applying Theorem~\ref{le:DescendDer} step by step), combined with \eqref{eq:PostDbdt1t2r1Pk1}, we deduce
\begin{align}
\sup_{s>t_2}\ \cR(i,c_i,s) & \le \tilde{\cB}_{k_2,i}^{(p)} \left(c_{k_1}\big/c_{k_2}\right)^{\frac{r_2}{r_2+1}} \cdot  \cR(r_2,c_{k_1},t_2) \notag
\\
& \le \tilde{\cB}_{k_2,i}^{(p)} \left(c_{k_1}\big/c_{k_2}\right)^{\frac{r_2}{r_2+1}} \cdot  \cR(r_1,c_{k_1},t_1) \notag
\\
& \le \tilde{\cB}_{k_2,i}^{(p)} \left(c_{k_1}\big/c_{k_2}\right)^{\frac{r_2}{r_2+1}} \left(c_\ell\big/c_{k_1}\right)^{\frac{r_1}{r_1+1}} \max_{\ell\le j\le \ell+p} \cR(j,c_\ell,t_0)\ , \qquad \forall  k_1\le i< k_2\ . \label{eq:PostDbdt2t3r2}
\end{align}
By \eqref{eq:t2c2reverse} and Theorem~\ref{le:DescendDer}
\begin{align*}
\sup_{t_1<s<t_2} \max_{j\ge k_1}\ \cR(j,c_{k_1},s) &\le \max\left\{\sup_{t_1<s<t_2}\max_{j> k_1+p} \cR(j,c_{k_1},s)\ ,\ \sup_{t_1<s<t_2}\max_{k_1\le j\le k_1+p} \cR(j,c_{k_1},s)\right\}
\\
&\le \sup_{t_1<s<t_2}\max_{k_1\le j\le k_1+p} \cR(j,c_{k_1},s) \le \max_{k_1\le j\le k_1+p} \cR(j,c_{k_1},t_1)\ .
\end{align*}
Recall that we are in Case~(II)$''$ under the Subcase~(I)$'$ of Case~(I); utilizing \eqref{eq:t1clreverse}, Theorem~\ref{le:DescendDer} and the above result,
\begin{align*}
\sup_{t_1<s<t_2} & \max_{j\ge k_1}\ \cR(j,c_{k_1},s) \le \left(c_\ell/c_{k_1}\right)^{\frac{r_1}{r_1+1}} \max_{k_1\le j\le k_1+p} \cR(j,c_\ell,t_1)
\\
&\le \left(c_\ell/c_{k_1}\right)^{\frac{r_1}{r_1+1}} \max_{\ell\le j\le \ell+p} \cR(j,c_\ell,t_1) \le \left(c_\ell/c_{k_1}\right)^{\frac{r_1}{r_1+1}} \max_{\ell\le j\le \ell+p} \cR(j,c_\ell,t_0)\ .
\end{align*}
Following an argument similar to the derivation of \eqref{eq:PostDbdt1t2r1} (applying Theorem~\ref{le:DescendDer} `pointwise in $s$'), combined with the above result, we deduce that, for any $t_1<s<t_2$,
\begin{align*}
\cR(i,c_i,s) & \le \tilde{\cB}_{k_1,i}^{(p)} \cdot \max_{j\ge k_1}\ \cR(j,c_{k_1},s) \le \tilde{\cB}_{k_1,i}^{(p)} \left(c_\ell/c_{k_1}\right)^{\frac{r_1}{r_1+1}} \max_{\ell\le j\le \ell+p} \cR(j,c_\ell,t_0)\ , \qquad \forall \ell\le i\le k_1\ .
\end{align*}
In particular, when $s=t_2$, the above result and \eqref{eq:AllDbdt2} (recall that we are in Case~(I)$'$ and $t_2$ is the first time \eqref{eq:AllDbdt2} occurs)
\begin{align}\label{eq:PrfCaseIt2}
\cR(i,c_i,t_2) & \le \tilde{\cB}_{k_1,i}^{(p)} \cdot \max_{j\ge k_1}\ \cR(j,c_{k_1},t_2) = \tilde{\cB}_{k_1,i}^{(p)} \cdot \cR(k_2,c_{k_1},t_2)\ , \qquad \forall \ell\le i\le k_1\ .
\end{align}
The synthesis of \eqref{eq:PostDbdt2t3r2Pk2} and \eqref{eq:PostDbdt2t3r2} with the uniform constant $c_{k_1}$ gives
\begin{align*}
\sup_{s>t_2} \max_{j\ge k_1} \cR(j,c_{k_1},s) &\le \max\left\{\sup_{s>t_2}\max_{j> k_2} \cR(j,c_{k_1},s),\ \sup_{s>t_2}\max_{k_1\le j\le k_2}\! \cR(j,c_{k_1},s)\right\}
\\
&\le \max\left\{\left(\frac{c_{k_2}}{c_{k_1}}\right)^{\frac{k_2}{k_2+1}}\! \sup_{s>t_2}\max_{j> k_2} \cR(j,c_{k_2},s) , \max_{k_1\le j\le k_2}\sup_{s>t_2} \left(\frac{c_j}{c_{k_1}}\right)^{\frac{j}{j+1}}\! \cR(j,c_j,s)\right\}
\\
&\le \tilde{\cB}_{k_2,k_1}^{(p)} \left(c_{k_1}\big/c_{k_2}\right)^{\frac{r_2}{r_2+1}} \left(c_\ell\big/c_{k_1}\right)^{\frac{r_1}{r_1+1}} \max_{\ell\le j\le \ell+p} \cR(j,c_\ell,t_0)\ .
\end{align*}
With \eqref{eq:PrfCaseIt2} in mind and the above restriction, Theorem~\ref{le:DescendDer} implies (similar to the derivation of \eqref{eq:PostDbdt1t2r1})
\begin{align}
\sup_{s>t_2}\ \cR(i,c_i,s) &\le \tilde{\cB}_{k_1,i}^{(p)}\cdot \sup_{s>t_2}\ \max_{j\ge k_1}\ \cR(j,c_{k_1},s) \notag
\\
&\le \tilde{\cB}_{k_2,i}^{(p)} \left(\frac{c_{k_1}}{c_{k_2}}\right)^{\frac{r_2}{r_2+1}} \left(\frac{c_\ell}{c_{k_1}}\right)^{\frac{r_1}{r_1+1}} \max_{\ell\le j\le \ell+p} \cR(j,c_\ell,t_0)\ , \qquad \forall  \ell\le i< k_1\ . \label{eq:PostDbdt2k1ell}
\end{align}
Thus, in view of \eqref{eq:PostDbdt2t3r2Pk2}, \eqref{eq:PostDbdt2t3r2} and \eqref{eq:PostDbdt2k1ell}, Case (II)$^*$ of the corollary is achieved (at $t=t_2$).

Inductively, if Case~(I)$^{(\tau)}$ repeats for multiple times (and $k_\tau<\kappa+p$), then by Lemma~\ref{cor:DerOrdConfig} (applied with $\kappa=k_{\tau}$), one of the following two cases must occur:

(I)$^{(\tau+1)}$ There exist $t_{\tau+1}$ and $k_{\tau+1}\ge k_\tau+p$ such that
\begin{align*}
\cR(j,c_{k_\tau},t_{\tau+1}) \le \cR(k_{\tau+1},c_{k_\tau},t_{\tau+1})\ , \qquad \forall k_\tau\le j\le k_{\tau+1}\ ;
\end{align*}

(II)$^{(\tau+1)}$ Otherwise
\begin{align*}
\sup_{s>t_\tau}\ \max_{j\ge k_\tau}\ \cR(j,c_{k_\tau},s) \le \max_{k_\tau\le j\le k_\tau+p}\ \cR(j,c_{k_\tau},t_\tau)\ .
\end{align*}
Note that the induction terminates when $k_\tau$ reaches $\kappa$ at which point Case~(I)$^*$ is achieved; at this level, \eqref{eq:PrfCaseIt2} (together with \eqref{eq:AllDbdt2} at $s=t_\tau$) becomes
\begin{align*}
\cR(i,c_i,t_\tau) & \le \tilde{\cB}_{k_{\tau-1},i}^{(p)} \cdot \cR(k_\tau,c_{k_{\tau-1}},t_\tau)\ , \qquad \forall \ell\le i\le k_\tau
\end{align*}
which proves the desired inequality in Case~(I)$^*$. If Case~(I)$^{(\tau)}$ stops repeating at some $k_\tau$ and $k_\tau<\kappa+p$, then Case~(II)$^*$ is achieved at $t=t_\tau$.
\end{proof}

\begin{remark}
For the vorticity, the results analogous to Theorem~\ref{le:AscendDer}, Theorem~\ref{le:DescendDer}, Lemma~\ref{cor:DerOrdConfig} and Corollary~\ref{cor:ScaleBound} (with the \emph{a priori} bound in $L^1$) hold as well.
\end{remark}

\medskip

The above four results lead to the main theorem.
\vspace{-0.05in}
\begin{theorem}[Asymptotic Criticality]\label{th:MainResult}
Let $u_0 \in L^\infty \cap L^2$ (resp. $\omega_0 \in L^\infty \cap L^1$) and $u$ in $C((0,T^*), L^\infty)$ where $T^*$ is the first possible blow-up time. Let $c$, $\ell$, $k$ be such that $\|u_0\|\lesssim (1+\epsilon)^{(2/d)\ell}$ (resp. $\|\omega_0\|\lesssim (1+\epsilon)^{(2/d)\ell}$) and \eqref{eq:AscDerCond} holds.
For any index $k\ge \ell$ and temporal point $t$ such that \eqref{eq:AscDerOrd} is satisfied and
\begin{align}\label{eq:tIRRegAscBdd}
&\qquad\quad t+\frac{1}{\cC(\|u_0\|,\ell,k)^2 \|D^ku(t)\|_\infty^{2/(k+1)}} < T^* \\
&\left(\textrm{resp. } t+\frac{1}{\cC(\|\omega_0\|,\ell,k)^2 \|D^k\omega(t)\|_\infty^{2/(k+2)}} < T^* \ \right) \notag
\end{align}
assume that there exists a temporal point
\begin{align*}
&\qquad\quad s=s(t)\in \left[t+\frac{1}{4\cdot\tilde{\cC}(\|u_0\|,\ell,k)\|D^ku(t)\|_\infty^{2/(k+1)}},\ t+\frac{1}{\tilde{\cC}(\|u_0\|,\ell,k)\|D^ku(t)\|_\infty^{2/(k+1)}}\right]
\\
&\left(\textrm{resp. } s=s(t)\in \left[t+\frac{1}{4\cdot\tilde{\cC}(\|\omega_0\|,\ell,k)\|D^k\omega(t)\|_\infty^{2/(k+2)}},\ t+\frac{1}{\tilde{\cC}(\|\omega_0\|,\ell,k)\|D^k\omega(t)\|_\infty^{2/(k+2)}}\right]\ \right)
\end{align*}
such that for any spatial point $x_0$, there exists a scale
\begin{align}\label{eq:kRegScale}
\rho\le \frac{1}{2\cdot\tilde{\cC}(\|u_0\|,\ell,k) \|D^ku(s)\|_\infty^{\frac{1}{k+1}}}\quad \left(\textrm{resp. }\rho\le \frac{1}{2\cdot\tilde{\cC}(\|\omega_0\|,\ell,k) \|D^k\omega(s)\|_\infty^{\frac{1}{k+2}}}\right)
\end{align}
with the property that the super-level set
\begin{align*}
&\qquad\quad V_\lambda^{j,\pm}=\left\{x\in\bR^d\ |\ (D^ku)_j^\pm(x,s)>\lambda \|D^ku(s)\|_\infty\right\}
\\
&\left(\textrm{resp. }\Omega_\lambda^{j,\pm}=\left\{x\in\bR^3\ |\ (D^k\omega)_j^\pm(x,s)>\lambda \|D^k\omega(s)\|_\infty\right\}\ \right)
\end{align*}
is 1D $\delta$-sparse around $x_0$ at scale $\rho$, with each constant $\tilde{\cC}(\|u_0\|,\ell,k)$ chosen such that
\begin{align}\label{eq:CCtildeComparison}
&\qquad\quad \tilde{\cC}(\|u_0\|,\ell,k)\gtrsim k^2\cdot \cC(\|u_0\|,\ell,k)\ ,\qquad \forall\ k\ge \ell
\\
&\left(\textrm{resp. } \tilde{\cC}(\|\omega_0\|,\ell,k)\gtrsim k^2\cdot \cC(\|\omega_0\|,\ell,k)\ ,\qquad \forall\ k\ge \ell \ \right) \notag
\end{align}
where $C(\|u_0\|_2,\ell,k)$'s are given in Theorem~\ref{le:AscendDer}; here the index $(j,\pm)$ is chosen such that $|D^ku(x_0,s)|=(D^ku)_j^\pm(x_0,s)$ (resp. $|D^k\omega(x_0,s)|=(D^k\omega)_j^\pm(x_0,s)$), and the pair $(\lambda,\delta)$ is chosen such that \eqref{eq:ParaAdjMaxP} in Theorem~\ref{le:DescendDer} holds. Then, there exists $\gamma>0$ such that $u\in L^\infty((0,T^*+\gamma); L^\infty)$.

In other words, if $D^ku(s)\in Z_{\alpha_k}(\lambda,\delta,c_0)$ (resp. $D^k\omega(s)\in Z_{\alpha_k}(\lambda,\delta,c_0)$) with $\alpha_k=1/(k+1)$ (resp. $\alpha_k=1/(k+2)$) for all $k\ge \ell$, then $T^*$ is not a blow-up time.
\end{theorem}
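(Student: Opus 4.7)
\medskip

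\noindent\textbf{Proof proposal.} The plan is to argue by contradiction, assuming $T^*$ is a genuine blow-up time, and to show that the hypothesized sparseness at the regularity-scale \eqref{eq:kRegScale} forces $\|D^k u\|_\infty$ to stop growing near $T^*$ for a well-chosen $k \ge \ell$. I would first fix an escape time $t$ close enough to $T^*$ that $\|D^k u(t)\|_\infty$ is large, and then apply Corollary~\ref{cor:ScaleBound} with a sequence $\{c_j\}$ tuned so that \eqref{eq:ParaAdjMaxPIndW} holds at every level. This dichotomy places the profile $\{\cR(j,c,t)\}_{j \ge \ell}$ into one of two configurations: a descending chain (Case (I)$^*$) or an ascending/hybrid chain (Case (II)$^*$).

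In Case (I)$^*$, Theorem~\ref{le:DescendDer} applies directly: since \eqref{eq:ParaAdjMaxP} is built into \eqref{eq:ParaAdjMaxPIndW} with $\mu_* \le 1$, the Taylor expansion on the complex disc of radius $r_t \sim \|D^k u(t)\|^{-1/(k+d/2)}$ combined with the \emph{a priori} sparseness from Theorem~\ref{th:LerayZalpha} already forces $\|D^k u(s)\| \le \mu_* \|D^k u(t)\|$ on a subsequent interval, contradicting the choice of $t$ as an escape time without invoking the regularity-scale assumption at all.

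In Case (II)$^*$ the profile is largely ascending, so Theorem~\ref{le:AscendDer} applies and gives a controlled complexification: the analytic extension $u(\cdot+iy,s)$ is bounded in $L^\infty$ by a small multiple of $\|D^k u(t)\|$ on the strip $|y| \le c(M)(s-t)^{1/2}$, and the span \eqref{eq:tIRRegAscBdd} ensures that at time $s$ given by the theorem this radius matches the regularity-scale $\rho \sim \|D^k u(s)\|^{-1/(k+1)}$ of the hypothesized sparseness \eqref{eq:kRegScale}. On each complex disc of this radius, the Taylor-expansion argument from the proof of Theorem~\ref{le:DescendDer}, with the ascending-chain bound \eqref{eq:AscDerUpBdd} replacing the descending hypothesis, controls $|D^k u(z,s)|$ by $\exp\bigl((2e/\eta)(1+\epsilon)^{\ell/k} c^{1/(k+1)}\bigr) \|D^k u(s)\|$. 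Proposition~\ref{prop:HarMaxPrin} combined with the 1D $\delta$-sparseness along the direction $\nu$ produced by the hypothesis, and the extremal harmonic-measure bound $h^* = \tfrac{2}{\pi}\arcsin\tfrac{1-\delta^{2/d}}{1+\delta^{2/d}}$, then yields
\[
|D^k u(x_0,s)| \le \Bigl(\lambda h^* + \exp\bigl((2e/\eta)(1+\epsilon)^{\ell/k} c^{1/(k+1)}\bigr)(1-h^*)\Bigr)\|D^k u(s)\| \le \mu\, \|D^k u(s)\|
\]
with $\mu<1$ by \eqref{eq:ParaAdjMaxP}. Since $x_0$ was arbitrary and the sparseness picks up the component realizing $|D^k u(x_0,s)|$, this forces $\|D^k u(s)\| < \|D^k u(t)\|$, again contradicting the escape property. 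Extending the solution past $T^*$ then follows from the standard continuation criterion combined with Theorem~\ref{th:LinftyIVP}.

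The main obstacle is synchronization: the constants $\tilde{\cC}(\|u_0\|,\ell,k)$ appearing in the sparseness scale \eqref{eq:kRegScale} must be compatible with the constants $\cC(\|u_0\|,\ell,k)$ coming from the ascending-chain estimate of Theorem~\ref{le:AscendDer}, and the relation \eqref{eq:CCtildeComparison} (with the factor $k^2$) is precisely the slack needed so that the Taylor tail converges geometrically on the sparseness disc and so that the $n_p \le (k_p/k_*)^2$ factor inherited from Corollary~\ref{cor:ScaleBound} is absorbed. A further subtlety is that the contradiction is local to a single escape time and a single $k$, so one must argue that \emph{some} such $k$ with the Case~(II)$^*$ configuration is reached as $t \uparrow T^*$, which Corollary~\ref{cor:ScaleBound} guarantees. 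The vorticity statement follows the identical scheme with the $L^1$ \emph{a priori} bound, the shifted exponent $1/(k+2)$, and the Biot--Savart Calder\'on--Zygmund estimates already used in Theorem~\ref{th:MainThmVor}.
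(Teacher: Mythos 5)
Your outline correctly identifies the two building blocks (Theorem~\ref{le:DescendDer} for descending configurations, Theorem~\ref{le:AscendDer} plus Proposition~\ref{prop:HarMaxPrin} at the matched scale \eqref{eq:kRegScale} for ascending ones), and the single-step harmonic-measure computation you write down is indeed the local mechanism. But there is a genuine gap: you treat the argument as if one application of this mechanism at one escape time and one level $k$ produces the contradiction, whereas each application of either theorem only controls the solution over a time interval of length $T_k \sim \|D^k u\|^{-2/(k+1)}$, which is minuscule compared to $T^* - t$. To reach $T^*$ one must iterate the mechanism on the order of $n^* \sim T^*\,2^{2\ell_{3q}}\|D^{\ell_{3q}}u\|^{d/(\ell_{3q}+d/2)}$ times, and each iteration can multiply the controlled quantities $\cR(j,c,\cdot)$ by a factor $(1+\tilde\epsilon)^{1/\ell_q}$ slightly exceeding $1$. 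The heart of the paper's proof is precisely the bookkeeping that keeps this accumulated product bounded: the partition of indices into sections and strings of Type-$\cA$ and Type-$\cB$, the three auxiliary lemmas (in particular Lemma~\ref{le:MaxIndxAStr}, which caps the maximal ascending index at $\ell_{i+3q}$ so that $T_{k_q}$ is bounded below, and the dichotomy claim that either $\cR(\hat m_0,\cdot,\cdot)$ decreases or a long waiting time $T^*_{k_q}$ must elapse before the next type-switch), the explicit bound $\tilde\epsilon_{\ell_q}\lesssim 2^{-\ell_q}(\cB_{\ell_q,\ell})^{-\ell_q}/c(\ell_{q-1})$, and the splitting of $[t_0,T^*]$ at times $\cT_i$ with a shift to higher strings when $T^*$ is too large. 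None of this appears in your proposal; the sentence asserting that Corollary~\ref{cor:ScaleBound} ``guarantees'' that some suitable $k$ is reached as $t\uparrow T^*$ papers over exactly this difficulty, since the corollary is a statement at a fixed time and does not by itself propagate control forward in time.

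A second, related omission: the hypotheses of Lemmas~\ref{le:AstrBdd} and \ref{le:BstrBdd} require $\sup_{t>t_0}\|u(t)\|\lesssim(1+\epsilon)^{\ell_i}$, and this bound is not given a priori on $(t_0,T^*)$ --- it must be bootstrapped from the very estimates it enables, via the interpolation $\|u(s)\|\lesssim\|u_0\|_2\,\|D^\ell u(s)\|^{(d/2)/(\ell+d/2)}$ and the controlled growth of $\cR(\ell,c(\ell),s)$. Your proposal assumes the applicability of the chain theorems without closing this loop. In short, the single-step mechanism is right, but the synchronization and iteration scheme --- which is where the actual work of the proof lies --- is missing.
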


\medskip

In order to streamline the proof of the theorem, we start with a definition followed by three lemmas.

\begin{definition}
We divide all the indexes into sections at $\ell=\ell_0<\ell_1<\cdots<\ell_i<\ell_{i+1}<\cdots$ such that $\ell_{i+1}=\phi(\ell_i)$ for some increasing function $\phi(x)\ge 2x$ and each pair $(\ell_i, \ell_{i+q})$ satisfies the condition~\eqref{eq:AscDerCond} (with $\ell=\ell_i$ and $k=\ell_{i+q}$) for some fixed integer $q$. With the notation introduced in \eqref{eq:RealScaleNote}, at any temporal point $t<T^*$ and for each index $i$ we pick $m_i\in [\ell_i,\ell_{i+1}]$ such that
\begin{align*}
\cR(m_i,c(\ell_i),t) = \max_{\ell_i\le j\le \ell_{i+1}} \cR(j,c(\ell_i),t)
\end{align*}
while
\vspace{-0.1in}
\begin{align*}
\cR(m_i,c(\ell_i),t) > \max_{\ell_i\le j< m_i} \cR(j,c(\ell_i),t) \
\end{align*}
where $c(\ell_i):=c_{\ell_{i+1}}$ which is the constant defined by \eqref{eq:ParaAdjMaxPIndW} with $j=\ell_{i+1}$. (If such index $m_i$ does not exist in $[\ell_i,\ell_{i+1}]$ then we let $m_i=\ell_i$.)
Note that $m_i(t)$ may be variant in time, and we will always assume $m_i$ corresponds to the temporal point $t$ in $\cR\left(m_i,\cdot, t\right)$.
Then, we divide the proof into two basic scenarios: (I) either there exists $k_i>\ell_{i+1}$ such that
\begin{align}\label{eq:NonDescAtLi}
\cR(k_i,c(\ell_i),t)\ge \max_{m_i\le j\le k_i} \cR(j,c(\ell_i),t)\ ,
\end{align}
(II) or
\vspace{-0.18in}
\begin{align}\label{eq:MaxDescAtLi}
\cR(m_i,c(\ell_i),t) > \max_{j> m_i} \cR(j,c(\ell_i),t)\ .
\end{align}
We call a section $[\ell_i,\ell_{i+1}]:=\left\{\cR(j,c(\ell_i),t)\right\}_{j=\ell_i}^{\ell_{i+1}}$ Type-$\cA$ if it satisfies \eqref{eq:NonDescAtLi} and Type-$\cB$ if it satisfies \eqref{eq:MaxDescAtLi}. We call the union of sections $[\ell_i, \ell_j]:= \cup_{i\le r\le j-1}[\ell_r,\ell_{r+1}]$ a string if $j-i\ge q$ or the condition~\eqref{eq:AscDerCond} is satisfied with $\ell=\ell_i$ and $k=\ell_j$, and we call a string Type-$\cA$ if it consists of only Type-$\cA$ sections and Type-$\cB$ if it contains at least one Type-$\cB$ section.
\end{definition}

\begin{lemma}\label{le:AstrBdd}
Suppose $\sup_{t>t_0}\|u(t)\|\lesssim (1+\epsilon)^{\ell_i}$, \eqref{eq:AscDerCond} holds at any temporal point with $\ell=\ell_i$ and $k=\ell_{i+q}$, and the assumption~\eqref{eq:kRegScale} holds for all $k\ge \ell_i$. If a string $[\ell_i,\ell_{i+q}]$ is of Type-$\cA$ at an initial time $t_0$, then for any $i\le r<i+q$,
\begin{align}\label{eq:RestrAtoB}
\max_{\ell_r\le j\le \ell_{i+q}} \sup_{t_0<s<\tilde t} \cR(j, c(\ell_r), s) \le (1+\tilde{\epsilon}(\ell_{i+q}))^{1/\ell_{i+q}} \Theta(p^*,r) \max_{i\le p\le i+q} \cR(m_p,c(\ell_p), t_0)
\end{align}
where $\Theta(p^*,r) \lesssim \tilde{\cB}_{\ell_{p^*}, \ell_r}\cdot c(\ell_{p^*})/c(\ell_r)$ is a constant which only depends on $c(\ell_{p^*})$ and $c(\ell_r)$, with $\tilde{\cB}_{i,j}:=\cB_{i,j}$ defined in Corollary~\ref{cor:ScaleBound} if $i>j$ and $\tilde{\cB}_{i,j}:=(\cB_{j,i})^{-1}$ if $i<j$, and $\tilde{\epsilon}(\ell_{i+q})$ is a small quantity which will be given explicitly in the proof; the subscript $p^*$ is the index for the maximal $\cR(m_p,c(\ell_p), t_0)$, and $\tilde t$ is the first time when $[\ell_i,\ell_{i+q}]$ switches to a Type-$\cB$ string; we set $\tilde t= \infty$ if $[\ell_i,\ell_{i+q}]$ is always of Type-$\cA$.
\end{lemma}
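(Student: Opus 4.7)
The plan is to read the Type-$\cA$ hypothesis directly as the ascending-chain condition \eqref{eq:AscDerOrd} required by Theorem~\ref{le:AscendDer}, and then to propagate the resulting estimate across the sections inside $[\ell_i,\ell_{i+q}]$ and in time up to $\tilde t$. First I would fix $r$ with $i\le r<i+q$. For every $r'\in\{r,\ldots,i+q-1\}$ the Type-$\cA$ assumption furnishes an index $k_{r'}>\ell_{r'+1}$ such that
\begin{align*}
\cR(k_{r'},c(\ell_{r'}),t)\ge \max_{m_{r'}\le j\le k_{r'}}\cR(j,c(\ell_{r'}),t), \qquad t\in(t_0,\tilde t).
\end{align*}
Using $c(\ell_{r'+1})\le c(\ell_{r'})<1$ together with the scaling identity $\cR(j,c_1,t)=(c_2/c_1)^{j/(j+1)}\cR(j,c_2,t)$, I would stitch these section-wise relations into one \emph{global} ascending statement at the common constant $c(\ell_r)$:
\begin{align*}
\cR(j,c(\ell_r),t_0)\le \Theta(p^*,r)\,\max_{i\le p\le i+q}\cR(m_p,c(\ell_p),t_0), \qquad \ell_r\le j\le \ell_{i+q},
\end{align*}
where the accumulated constant $\Theta(p^*,r)\lesssim \tilde{\cB}_{\ell_{p^*},\ell_r}\,c(\ell_{p^*})/c(\ell_r)$ inherits the same telescoping-product structure as the $\cB$ constants already produced in Corollary~\ref{cor:ScaleBound}.

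Next I would apply Theorem~\ref{le:AscendDer} with $\ell=\ell_r$, $k=\ell_{i+q}$ and constant $c(\ell_r)$. The standing condition \eqref{eq:AscDerCond} is assumed between $\ell_i$ and $\ell_{i+q}$, hence between $\ell_r$ and $\ell_{i+q}$ as well; the ascending-chain condition \eqref{eq:AscDerOrd} at $t=t_0$ is precisely what the previous step establishes. Theorem~\ref{le:AscendDer} then yields the claimed bound on an initial interval $(t_0,t_0+T)$ of length $T^{1/2}\lesssim \cC(\|u_0\|,\ell_r,\ell_{i+q})^{-1}\|D^{\ell_{i+q}}u(t_0)\|^{-1/(\ell_{i+q}+1)}$, with a multiplicative error of the form $1+(M-1)$ inherited from the threshold $M$ in \eqref{eq:AscDerUpBddReal}. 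To reach $\tilde t$, I would iterate: on each new base time the Type-$\cA$ character of the string persists by the very definition of $\tilde t$, so the global ascending chain at constant $c(\ell_r)$ is re-verified, and a fresh application of Theorem~\ref{le:AscendDer} covers the next step. Absorbing the product of the per-step factors $(1+(M-1))^N$ into the announced pre-factor $(1+\tilde{\epsilon}(\ell_{i+q}))^{1/\ell_{i+q}}$ amounts to calibrating $M-1$ to be small compared to $1/(N\ell_{i+q})$.

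The delicate point will be the time iteration: each step of length $T\lesssim \|D^{\ell_{i+q}}u\|^{-2/(\ell_{i+q}+1)}$ shrinks as the norm grows, so one must verify that the number of iterations $N$ needed to exhaust $(t_0,\tilde t)$ remains finite and in fact grows only polynomially in $\ell_{i+q}$. Since each iteration multiplies the $L^\infty$-norm of $D^{\ell_{i+q}}u$ by at most $1+(M-1)$, the step lengths decrease only geometrically, which keeps $N = O(\ell_{i+q})$ and therefore allows $\tilde{\epsilon}(\ell_{i+q})\to 0$ as $\ell_{i+q}\to\infty$---exactly the asymptotic behavior required for the criticality program of Theorem~\ref{th:MainResult}. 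The other delicate ingredient, the section-to-section constant conversion producing $\Theta(p^*,r)$, is morally identical to the telescoping bookkeeping already performed in the proof of Corollary~\ref{cor:ScaleBound}, so no fundamentally new estimate is involved; the work is confined to tracking the exponents of $c(\ell_p)/c(\ell_r)$ through the cumulative stitching.
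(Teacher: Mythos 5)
There is a genuine gap: your iteration scheme never invokes the sparseness hypothesis \eqref{eq:kRegScale} or the harmonic measure majorization principle (Proposition~\ref{prop:HarMaxPrin}), and without these the time-stepping cannot reach $\tilde t$. In your scheme each application of Theorem~\ref{le:AscendDer} covers a window of length $T\lesssim \|D^{\ell_{i+q}}u\|^{-2/(\ell_{i+q}+1)}$ and allows the top derivative to be \emph{multiplied} by a factor $\ge 1$; consequently the window lengths decrease at least geometrically and their sum converges, so the iteration exhausts only a finite interval that need not contain $(t_0,\tilde t)$ --- this is exactly the classical obstruction to proving regularity by concatenating local existence intervals. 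Your claim that $N=O(\ell_{i+q})$ has no supporting mechanism, and the accumulated factor $(1+(M-1))^{N}$ is then uncontrolled. The paper's proof closes the loop differently: after each window the assumed $1$D $\delta$-sparseness of the super-level sets of $D^{k}u$ at the scale \eqref{eq:kRegScale} (which matches the analyticity radius delivered by the ascending-chain theorem) is fed into Proposition~\ref{prop:HarMaxPrin}, producing the contraction
\begin{align*}
\cR(k,c(\ell_i),t_0+T_{k})\le \bigl(\mu_{k}(i)\bigr)^{\frac{1}{k+1}}\,\cR(k,c(\ell_i),t_0)\quad\text{with }\mu_{k}(i)<1 ,
\end{align*}
so the \emph{top} of the chain strictly decreases per step while the lower-order terms pick up only the tiny factor $\zeta_{k}^{1/(j+1)}$. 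The number of iterations $n_{k}$ is then bounded by comparing the geometric decay rate $\mu_k$ of the peak against the lower bound \eqref{eq:ThetaLowerBdd} on the gap $\theta$ to the second-highest peak, and the total perturbation $(\zeta_k)^{n_k}\le 1+\tilde\epsilon$ is what produces the prefactor in \eqref{eq:RestrAtoB}. This decrease-of-the-peak mechanism is the entire point of the lemma and is absent from your argument.

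A secondary issue: the paper does not stitch the sections into a single global ascending chain at the constant $c(\ell_r)$ and apply Theorem~\ref{le:AscendDer} once per window with $\ell=\ell_r$, $k=\ell_{i+q}$. Instead it isolates a nested hierarchy of local peaks $k_{\tau_1}<\cdots<k_{\tau_n}$, applies the ascending-chain theorem at the top peak, and propagates the bound downward through the hierarchy via the recursive termination condition \eqref{eq:TransKpEnd}; the telescoping products of $c(\ell_{\tau_p})/c(\ell_{\tau_{p-1}})$ arising there are what actually produce $\Theta(p^*,r)$. Your "global stitching" step would also need justification that the section-wise maxima can be compared across different constants $c(\ell_{r'})$ without losing the ordering \eqref{eq:AscDerOrd} required by Theorem~\ref{le:AscendDer}, which is not automatic.
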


\begin{proof}
Several notions and basic results for Type-$\cA$ sections are developed before proceeding to the proof.

We claim that if $[\ell_i,\ell_{i+1}]$ is of Type-$\cA$ and $k_\tau\in [\ell_\tau,\ell_{\tau+1}]$ is one of the indexes in \eqref{eq:NonDescAtLi}, then all the $[\ell_j,\ell_{j+1}]$'s with $i\le j\le \tau$ are Type-$\cA$ sections. Note
that if $k_\tau$ satisfies \eqref{eq:NonDescAtLi}, then
\begin{align*}
\max_{m_i\le r<k_\tau}\cR(r,c(\ell_i),t_0) \le \cR(m_i,c(\ell_i),t_0) < \cR(k_\tau,c(\ell_i),t_0)\ .
\end{align*}
Recall that $c(\ell_j)<c(\ell_i)$ if $j>i$; hence, the above inequality implies
\begin{align*}
\max_{\ell_j\le r<k_\tau}\cR(r,c(\ell_j),t_0) < \cR(k_\tau,c(\ell_j),t_0)\qquad\textrm{for all}\quad i\le j\le \tau\ .
\end{align*}
This ends the proof of the claim.

Next, for any index $k\ge\ell_\tau$ ($\tau\ge q$) define
\begin{align*}
&T_{k}(t) := \tilde{C}(\|u_0\|,\ell_{\tau-q},\ell_\tau)^{-2} (M_{k}-1)^2 \|D^{k}u(t)\|^{-\frac{2}{k+1}} \ ,
\\
&\theta(i,j,\ell,t) := \frac{\cR(i,c(\ell),t)}{\cR(j,c(\ell),t)} \ , \qquad\ \zeta_{k}^{(i)}(r,j):= \theta(j,r,\ell_i,t)^{j+1} + \frac{\tilde{c}(k)}{c(\ell_i)} (M_{k}-1)\ ,
\\
&\mu_{k}^{(i)}(r,j):= \lambda h^* + (1-h^*) \left(M_{k} + \frac{\tilde{c}(k)}{c(\ell_i)}\ \theta(r,j,\ell_i,t)^{j+1} \right)\ 
\end{align*}
where $\tilde{c}(k):=\tilde{C}(\|u_0\|_2,\ell_{\tau-q},\ell_\tau)^{-1} \lesssim 2^{-k}\left(\cB_{\ell_\tau, \ell_{\tau-q}}\right)^{-k}$ and the constants $\lambda, h^*$ are chosen as in Theorem~\ref{le:DescendDer}.
In the rest of the proof we will write $\tilde{c}(k)$ for $\tilde{C}(\|u_0\|_2,\ell_{\tau-q},\ell_\tau)$ and $\theta(i,j,t)$ for $\theta(i,j,\ell,t)$ whenever there is no ambiguity.
For convenience, we write $\zeta_{k}(i)$ and $\mu_{k}(i)$ if $r=j$ (in which case $\theta=1$).
Without loss of generality we assume that each section $[\ell_\tau,\ell_{\tau+1}]$ after $\ell_i$ contains at most one $k_\tau$ which satisfies \eqref{eq:NonDescAtLi} (otherwise one can pick only the maximal one in $[\ell_\tau,\ell_{\tau+1}]$).
So for each $[\ell_i,\ell_{i+1}]$ one can find a section: $\ell_i<k_i<k_{i+1}<\cdots<k_\tau<\cdots$ such that $k_{\tau+1}-k_\tau\ge \ell_{\tau+1}-\ell_{\tau}$ and for all $\tau$
\begin{align}\label{eq:LocalPeaks}
\theta(k_\tau,k_{\tau+1},t_0)\le 1\ , \qquad \min_{k_\tau<j<k_{\tau+1}}\theta(k_\tau,j,t_0)>1\ .
\end{align}
This implies $T_{k_i}>T_{k_{i+1}}>\cdots>T_{k_\tau}>T_{k_{\tau+1}}>\cdots$ and moreover,
\begin{align*}
\cR(j,c(\ell_i), t_0) \le \theta(k_\tau,k_{\tau+1},t_0)\cdot \cR(k_{\tau+1},c(\ell_i), t_0)\ , \qquad \forall\ j\le k_{\tau+1}\  .
\end{align*}
Since $\|u_0\|\lesssim \|u_0\|_2^{\frac{k_\tau}{k_\tau + d/2}} \|D^{k_{\tau}}u_0\|^{\frac{d/2}{k_\tau + d/2}}$ (by Lemma~\ref{le:GNIneq}) and $\|D^{k_{\tau+1}}u_0\|\lesssim (k_{\tau+1}!)\|u_0\|^{k_{\tau+1}}$ (by Theorem~\ref{th:LinftyIVP}), $\theta$ has the lower bound
\begin{align}
\theta(k_\tau,k_{\tau+1},\ell_i,t_0) &= \frac{\|D^{k_\tau}u_0\|^{\frac{1}{k_\tau +1}}}{c(\ell_i)^{\frac{k_\tau}{k_\tau+1}} (k_\tau!)^{\frac{1}{k_\tau +1}}} \bigg/ \frac{\|D^{k_{\tau+1}}u_0\|^{\frac{1}{k_{\tau+1} +1}}}{c(\ell_i)^{\frac{k_{\tau+1}}{k_{\tau+1}+1}} (k_{\tau+1}!)^{\frac{1}{k_{\tau+1} +1}}} \notag
\\
&\ge c(\ell_i)^{\frac{1}{k_\tau+1} - \frac{1}{k_{\tau+1}+1}} \cdot \frac{(k_{\tau+1}!)^{\frac{1}{k_{\tau+1} +1}}}{(k_\tau!)^{\frac{1}{k_\tau +1}}} \cdot  \frac{\|D^{k_\tau}u_0\|^{\frac{1}{k_\tau +1}}}{\|D^{k_{\tau+1}}u_0\|^{\frac{1}{k_{\tau+1} +1}}}  \notag
\\
&\ge c(\ell_i)^{\frac{1}{k_\tau+1} - \frac{1}{k_{\tau+1}+1}} (k_\tau!)^{-\frac{1}{k_\tau +1}} \|u_0\|_2^{-\frac{2k_\tau + d}{d(k_\tau +1)}} \|u_0\|^{2/d-1}\ . \label{eq:ThetaLowerBdd}
\end{align}

First, we assume that for each $[\ell_j,\ell_{j+1}]$ within $[\ell_i,\ell_{i+q}]$ there are finitely many $k_\tau$'s as described in \eqref{eq:LocalPeaks}, and then consider the case where there are infinitely many $k_\tau$'s for at least one $[\ell_j,\ell_{j+1}]$. Note that if $[\ell_i,\ell_{i+q}]$ is a Type-$\cA$ string, then the maximal $k_\tau$ (for all $[\ell_j,\ell_{j+1}]$'s within $[\ell_i,\ell_{i+q}]$) is greater than $\ell_{i+q}$. Without loss of generality we may assume that the maximal $k_\tau\in [\ell_{i+q}, \ell_{i+q+1}]$, otherwise one can make the same argument over the string $[\ell_i,\ell_\tau]$.

\medskip

We prove \eqref{eq:RestrAtoB} in two steps. In the first step, assume that the maximal index $k_i$ for $[\ell_i,\ell_{i+1}]$ as described in \eqref{eq:NonDescAtLi} is greater than $\ell_{i+q}$, and without loss of generality $k_i\in [\ell_{i+q}, \ell_{i+q+1}]$ and such order remains within $[t_0,\tilde t]$ (the proof is the same if $k_i\in[\ell_\tau, \ell_{\tau+1}]$ for some $\tau>i+q$ which is variant as time goes towards $\tilde t$), that is
\begin{align}\label{eq:FiniteLocalPeaks}
\max_{m_i\le j\le k_i}\sup_{t_0\le s\le \tilde{t}}\theta(j, k_i,\ell_i, s)\le 1\ , \qquad \max_{j> k_i} \sup_{t_0\le s\le \tilde{t}}\theta(j, k_i,\ell_i, s)< 1 \ .
\end{align}
Suppose the second maximal index for \eqref{eq:NonDescAtLi} is $k_i^*$ and such order remains within $[t_0,\tilde t]$ (the proof is the same if $k_i^*$ is variant as time goes towards $\tilde t$), that is
\begin{align}\label{eq:2ndFinitePeaks}
\max_{m_i\le j\le k_i^*}\sup_{t_0\le s\le \tilde{t}}\theta(j, k_i^*, \ell_i, s)\le 1\ , \qquad \max_{k_i^*< j<k_i}\sup_{t_0\le s\le \tilde{t}}\theta(j, k_i^*, \ell_i, s)< 1\ .
\end{align}
With the above assumptions, it follows from the proof of Theorem~\ref{le:AscendDer} (with $\ell=\ell_i$ and $k=k_i$) that \begin{align*}
\sup_{t<s<t+T_{k_i}}\! & \cC(k_i, c(\ell_i), \varepsilon, t, s) \le \left(M_{k_i} + \tilde{c}(k_i)\big/c(\ell_i) \right)^{\frac{1}{k_i+1}} \cR(k_i,c(\ell_i), t)
\end{align*}
and, for any $m_i\le j< k_i$,
\begin{align*}
&\sup_{t<s<t+T_{k_i}}\!\! \cR(j,c(\ell_i), s) \le \left(\theta(j,k_i^*,t)^{j+1}+ \frac{\tilde{c}(k_i)}{c(\ell_i)}\ (M_{k_i}-1) \right)^{\frac{1}{j+1}} \cR(k_i^*,c(\ell_i), t)\
\end{align*}
where the constant $M_{k_i}<2$. As the assumption~\eqref{eq:kRegScale} holds at $t_0$, i.e. $D^{k_i}u(s_1)\in Z_{\alpha_{k_i}}(\lambda,\delta,c(\ell_i))$ with $\alpha_{k_i}=1/(k_i+1)$, where $s_1\in[t_0+T_{k_i}/4,t_0+T_{k_i}]$, and without loss of generality we can assume it is located at the right endpoint, that is $s_1=t_0+T_{k_i}$, by the above estimate for the complex solutions and Proposition~\ref{prop:HarMaxPrin} (applied with $\lambda h^* + M_{k_i}(1-h^*)\le \mu$ where $h^*$ is given in Theorem~\ref{le:DescendDer} and the constant $\mu$ is chosen such that $M_{k_i}\mu\le 1$), we know
\begin{align*}
\cR(k_i, c(\ell_i), t_0+T_{k_i}) \le \left(\mu_{k_i}(i)\right)^{\frac{1}{k_i+1}} \cR(k_i, c(\ell_i), t_0).
\end{align*}
At the same time, for any $m_i\le j< k_i$, by the estimate for the real solutions,
\begin{align*}
\cR(j,c(\ell_i), t_0+T_{k_i})&\le \left(\zeta_{k_i}^{(i)}(k_i^*,j)\right)^{\frac{1}{j+1}} \cR(k_i^*,c(\ell_i), t_0)\ .
\end{align*}
As the assumption~\eqref{eq:kRegScale} still holds at $t_0+T_{k_i}$ (with $s_2=t_0+2T_{k_i}$), if the order \eqref{eq:FiniteLocalPeaks} remains at $t_0+T_{k_i}$, then we can repeat the above procedure (Theorem~\ref{le:AscendDer} and Proposition~\ref{prop:HarMaxPrin}) for $n_{k_{i}}$ times (with $s_n=t_0+n\cdot T_{k_i}$) until
\begin{align}\label{eq:equileveltime}
\cR(k_{i},c(\ell_i), t_0+n_{k_{i}}T_{k_{i}})\le \cR(k_i^*,c(\ell_i), t_0+n_{k_{i}}T_{k_i})\ ,
\end{align}
i.e. the order \eqref{eq:FiniteLocalPeaks} remains until some time approximately at $t_0+n_{k_{i}}T_{k_{i}}$ and $\tilde t\lesssim t_0+n_{k_{i}}T_{k_i}$, and $k_i$ must switch to the index $k_i^*$ in \eqref{eq:FiniteLocalPeaks} at $\tilde t$ because $k_i^*$ is always the second maximal index before $\tilde t$ as shown in \eqref{eq:2ndFinitePeaks}.
Note that $\mu_{k_i}(i)<1$ before $t_0+n_{k_{i}}T_{k_{i}}$ while $\zeta_{k_i}(i)>1$, so when applying Proposition~\ref{prop:HarMaxPrin} each time, $\cR(k_{i},c(\ell_i), t)$ is decreasing with possible slight perturbation which is less than $\left(\zeta_{k_i}(i)\right)^{\frac{1}{k_i+1}}$ multiple of the current size, that is, for each $\nu<n_{k_{i}}$,
\begin{align*}
\sup_{t_0+\nu T_{k_{i}}<s<t_0+(\nu+1)T_{k_{i}}}\!\! \cR(k_{i},c(\ell_i), s) \le \left(\zeta_{k_i}(i)\right)^{\frac{1}{k_i+1}} \cR(k_{i},c(\ell_i), t_0+\nu T_{k_{i}})
\end{align*}
while \eqref{eq:FiniteLocalPeaks} and \eqref{eq:2ndFinitePeaks} are preserved. Therefore
\begin{align*}
\max_{\ell_i\le j< k_i} \sup_{t_0<s<\tilde t} \cR(j, c(\ell_i), s) &\lesssim \cR(k_{i},c(\ell_i), t_0+n_{k_{i}}T_{k_{i}}) \lesssim \cR(k_i^*,c(\ell_i), t_0+n_{k_{i}}T_{k_i})\ .
\end{align*}
On the other hand, multiple iterations (for $n_{k_{i}}$ times) in Theorem~\ref{le:AscendDer} imply that
\begin{align*}
\cR(k_{i},c(\ell_i), t_0+n_{k_{i}}T_{k_{i}}) &\lesssim \left(\mu_{k_i}(i)\right)^{n_{k_{i}}/k_i} \cR(k_{i},c(\ell_i), t_0)\ ,
\\
\cR(k_i^*,c(\ell_i), t_0+n_{k_{i}}T_{k_i}) &\lesssim \left(\zeta_{k_i}(i)\right)^{n_{k_{i}}/k_i^*} \cR(k_i^*,c(\ell_i), t_0)\ .
\end{align*}
Note that the right hand side of the second inequality above gives the maximum possible value of $\cR(k_i^*, c(\ell_i), s)$ before $\tilde t$ if \eqref{eq:equileveltime} occurs at $n_{k_{i}}$-th iteration and $\cR(k_i^*,c(\ell_i), s)$ is multiplied by $\left(\zeta_{k_i}(i)\right)^{\frac{1}{k_i^*+1}}$ after each iteration. So the maximal number of iterations $n_{k_{i}}$ that guarantees \eqref{eq:equileveltime} is determined by
\begin{align*}
&\left(\mu_{k_i}(i)\right)^{n_{k_{i}}/k_i} \cR(k_{i},c(\ell_i), t_0) \approx \cR(k_{i},c(\ell_i), t_0+n_{k_{i}}T_{k_{i}})
\\
&\qquad \approx \cR(k_{i}^*,c(\ell_i), t_0+n_{k_{i}}T_{k_{i}}) \approx \left(\zeta_{k_i}(i)\right)^{n_{k_{i}}/k_i^*} \cR(k_i^*,c(\ell_i), t_0)\ .
\end{align*}
From \eqref{eq:ThetaLowerBdd} we know the lower bound for $\theta(k_i^*,k_i,t_0)$ ($=\cR(k_i^*,c(\ell_i), t_0)/\cR(k_{i},c(\ell_i), t_0)\le 1$) is a constant multiple of $(k_i^*!)^{-\frac{1}{k_i^*+1}}\|u_0\|^{\frac{2}{d}-1}$ which is approximately $(k_i^*)^{-1}(1+\epsilon)^{(\frac{2}{d}-1)\ell}$, so
\begin{align*}
\ln\theta(k_i^*,k_i,t_0)\gtrsim -\ln k_i^* + (2/d-1)\ell\cdot \ln(1+\epsilon)\ .
\end{align*}
Recall that $\zeta_{k_i}(i)\approx 1+ (M_{k_i}-1)\cdot \tilde{c}(k_{i})/c(\ell_i)$ and $\left(\zeta_{k_i}(i) \right)^{-k_{i}/k_i^*}\approx 1$, therefore
\begin{align*}
n_{k_{i}} \lesssim \frac{k_{i}\ \ln \theta(k_i^*,k_i,t_0)}{\ln \left(\mu_{k_i}(i) \cdot \left(\zeta_{k_i}(i) \right)^{-k_{i}/k_i^*}\right)} \lesssim  \frac{k_i\ (\ln k_i^* + (1-2/d)\ell)}{-\ln \left(\mu_{k_i}(i)\right)} \
\end{align*}
and $\left(\zeta_{k_i}(i)\right)^{n_{k_{i}}}\le 1+\tilde \epsilon_{k_i}$ for some negligible quantity $\tilde \epsilon_{k_i}$. Since $\cR(k_i^*,c(\ell_i), s)$ is at most multiplied by $\left(\zeta_{k_i}(i)\right)^{\frac{1}{k_i^*+1}}$ within each $[t_0+\nu T_{k_i}, t_0+(\nu+1)T_{k_i}]$,
\begin{align*}
\max_{\ell_i\le j< k_i} \sup_{t_0<s<\tilde t} \cR(j, c(\ell_i), s) &\le \left( \zeta_{k_i}(i) \right)^{n_{k_{i}}/k_i^*} \cR(k_i^*,c(\ell_i), t_0) \le (1+\tilde \epsilon_{k_i})^{1/k_i^*} \cR(k_i^*,c(\ell_i), t_0)\ .
\end{align*}

Now we prove \eqref{eq:RestrAtoB} without assuming $k_i>\ell_{i+q}$, i.e. for each $[\ell_j,\ell_{j+1}]\subset[\ell_i,\ell_{i+q}]$ the maximal index in \eqref{eq:NonDescAtLi} can be less than $\ell_{i+q}$ (the second step). Since $[\ell_i,\ell_{i+1}]$ is a Type-$\cA$ section (recall that we are still in the case where the $k_\tau$'s for each $[\ell_j,\ell_{j+1}]$ are finitely many), similarly to \eqref{eq:FiniteLocalPeaks} one can find the maximal index $k_{\tau_1}\in [\ell_{\tau_1},\ell_{\tau_1+1}]$ (for some $\ell_{\tau_1}>\ell_i$) such that
\begin{align*}
\max_{m_i\le j\le k_{\tau_1}} \sup_{t_0\le s\le \tilde{t}} \theta(j, k_{\tau_1}, \ell_i, s)\le 1\ , \qquad \max_{j> k_{\tau_1}} \sup_{t_0\le s\le \tilde{t}} \theta(j, k_{\tau_1}, \ell_i, s)< 1 \ .
\end{align*}
(The proof is the same if $\tau_1$ is variant as time goes forwards $\tilde t$.) In the rest of the proof we write $\theta(j, \tau, r, s)$ for $\theta(j, k_{\tau}, \ell_r, s)$ if there is no ambiguity. If $\ell_{\tau_1}<\ell_{i+q}$ or $(\ell_i, \ell_{\tau_1})$ does not satisfy the condition~\eqref{eq:AscDerCond}, we repeat the above procedure for $[\ell_{\tau_1},\ell_{\tau_1+1}]$, and in general we find the maximal index $k_{\tau_p}\in [\ell_{\tau_p},\ell_{\tau_p+1}]$ ($\ell_{\tau_p}>\ell_{\tau_{p-1}}$) such that
\begin{align}\label{eq:LocalPeaksAstr}
\max_{m_i \le j\le k_{\tau_p}} \sup_{t_0\le s\le \tilde{t}} \theta(j, \tau_p, \tau_{p-1}, s)\le 1\ , \qquad \max_{j> k_{\tau_p}} \sup_{t_0\le s\le \tilde{t}} \theta(j, \tau_p, \tau_{p-1}, s)< 1 \ ,
\end{align}
until $(\ell_i, \ell_{\tau_n})$ satisfies the condition~\eqref{eq:AscDerCond} or $\ell_{\tau_n}>\ell_{i+q}$, prior to which such $k_{\tau}$ always exists since $[\ell_{\tau},\ell_{\tau+1}]$ is contained in $[\ell_i,\ell_{i+q}]$ which is a Type-$\cA$ string up to $\tilde t$.
Without loss of generality we may assume $k_{\tau_{n}}\in [\ell_{i+q}, \ell_{i+q+1}]$, i.e. $\ell_{\tau_n}=\ell_{i+q}$ (recall that we are still in the case where the $k_\tau$'s for each $[\ell_j,\ell_{j+1}]$ are finitely many) and the order \eqref{eq:LocalPeaksAstr} remains within $[t_0,\tilde t]$, i.e. the indexes $\{\tau_1,\cdots,\tau_n\}$ determined by \eqref{eq:LocalPeaksAstr} remain the same until $\tilde t$, in which case $[\ell_i,\ell_{i+q}]$ switches to a Type-$\cB$ string at $\tilde t$ because one of the sections $[\ell_{\tau_1},\ell_{\tau_1 +1}],\cdots, [\ell_{\tau_{n-1}},\ell_{\tau_{n-1} +1}]$ becomes a Type-$\cB$ section at $\tilde t$.
With the above settings for $k_{\tau}$'s, it follows again by the proof of Theorem~\ref{le:AscendDer} (with $\ell=\ell_i$ and $k=k_{\tau_n}$) that
\begin{align*}
\sup_{t<s<t+T_{k_{\tau_n}}}\!\! \cC(k_{\tau_n},c(\ell_{\tau_{n-1}}), \varepsilon, t, s) \le \left(M_{k_{\tau_n}} + \frac{\tilde{c}(k_{\tau_n})}{c(\ell_{\tau_{n-1}})}\right)^{\frac{1}{k_{\tau_n}+1}} \cR(k_{\tau_n},c(\ell_{\tau_{n-1}}), t)
\end{align*}
($\tau_0:=i$) and, for any $\ell_i\le j<k_{\tau_n}$ and any $1 \le p\le n-1$,
\vspace{-0.05in}
\begin{align*}
&\sup_{t<s<t+T_{k_{\tau_n}}}\!\!\! \cR(j,c(\ell_{\tau_p}), s) \le \left(1+\frac{\tilde{c}(k_{\tau_n})\cdot (M_{k_{\tau_n}}-1)}{c(\ell_{\tau_p})\cdot \theta(j, \tau_{p+1}, \tau_p, t)^{j+1}} \right)^{\frac{1}{j+1}} \cR(j,c(\ell_{\tau_p}), t)\ ,
\end{align*}
where the constant $M_{k_{\tau_n}}<2$ and we used implicitly the fact that $k_{\tau_p}\ge 2 k_{\tau_{p-1}}$ since we assumed (without loss of generality) earlier that each $[\ell_\tau,\ell_{\tau+1}]$ contains at most one $k_\tau$ for \eqref{eq:NonDescAtLi} and from the setup for $\{\ell_j\}$ we know $\ell_{j+1}= \phi(\ell_j) \ge 2\ell_j$. As presented in the previous step, multiple applications of Theorem~\ref{le:AscendDer} and Proposition~\ref{prop:HarMaxPrin} to the above estimates yield
\begin{align*}
\cR(k_{\tau_n},c(\ell_{\tau_{n-1}}), t_0+\nu_{\tau_n}T_{k_{\tau_n}}) &\le \left(\mu_{k_{\tau_n}}(\tau_{n-1})\right)^{\frac{\nu_{\tau_n}}{k_{\tau_n}+1}}\cdot \cR(k_{\tau_n},c(\ell_{\tau_{n-1}}), t_0)\ ,
\\
\cR(j,c(\ell_{\tau_p}), t_0+\nu_{\tau_n}T_{k_{\tau_n}}) &\le \left(\zeta_{k_{\tau_n}}^{(\tau_p)}(k_{\tau_{p+1}},j) \right)^{\frac{\nu_{\tau_n}}{j+1}} \cdot \cR(k_{\tau_{p+1}},c(\ell_{\tau_p}), t_0)\ ,
\end{align*}
for any $\ell_i\le j<k_{\tau_n}$ and any $1 \le p\le n-1$.
Recall that $\{\tau_1,\cdots,\tau_n\}$ in \eqref{eq:LocalPeaksAstr} is retained up to $\tilde t$, so $t_0+\nu_{\tau_n}T_{k_{\tau_n}}$ reaches $\tilde t$ until
\begin{align}\label{eq:TransKpEnd}
\cR(k_{\tau_{p+1}},c(\ell_{\tau_p}), t_0+\nu_{\tau_n}T_{k_{\tau_n}})\le \cR(k_{\tau_p},c(\ell_{\tau_p}), t_0+\nu_{\tau_n}T_{k_{\tau_n}})\ .
\end{align}
As the order \eqref{eq:LocalPeaksAstr} remains until $\tilde t$, the maximum possible value of $\cR(k_{\tau_p}, c(\ell_{\tau_p}), \tilde t)$ is achieved only if $\cR(k_{\tau_{p+1}}, c(\ell_{\tau_p}), \tilde t)$ attains its possible maximum.
By a recursive argument, any of $\cR(k_{\tau_p}, c(\ell_{\tau_p}), \tilde t)$'s ($1 \le p\le n-1$) reaches its possible maximum only if $\cR(k_{\tau_{n-1}}, c(\ell_{\tau_{n-2}}), \tilde t)$ reaches its possible maximum, i.e. it is multiplied by $\left(\zeta_{k_{\tau_n}}(\tau_{n-2})\right)^{\frac{1}{k_{\tau_{n-1}}+1}}$ after each iteration. With the same reasoning as in the previous step, the maximal number of iterations $\nu_{\tau_n}$ that guarantees \eqref{eq:TransKpEnd} ($p=n-1$) is determined by
\begin{align*}
&\left(\mu_{k_{\tau_n}}(\tau_{n-1})\right)^{\frac{\nu_{\tau_n}}{k_{\tau_n}}} \cR(k_{\tau_n},c(\ell_{\tau_{n-1}}), t_0) \approx \cR(k_{\tau_n},c(\ell_{\tau_{n-1}}), t_0+\nu_{\tau_n}T_{k_{\tau_n}})
\\
&\quad \approx \cR(k_{\tau_{n-1}},c(\ell_{\tau_{n-1}}), t_0+\nu_{\tau_n}T_{k_{\tau_n}}) \approx \left(\frac{c(\ell_{\tau_{n-2}})}{c(\ell_{\tau_{n-1}})}\right)^{\frac{k_{\tau_{n-1}}}{k_{\tau_{n-1}}+1}}\! \cR(k_{\tau_{n-1}},c(\ell_{\tau_{n-2}}), t_0+\nu_{\tau_n}T_{k_{\tau_n}})
\\
&\qquad \approx \left(c(\ell_{\tau_{n-2}})/c(\ell_{\tau_{n-1}})\right)^{\frac{k_{\tau_{n-1}}}{k_{\tau_{n-1}}+1}} \left(\zeta_{k_{\tau_n}}(\tau_{n-2})\right)^{\frac{\nu_{\tau_n}}{k_{\tau_{n-1}}}} \cR(k_{\tau_{n-1}},c(\ell_{\tau_{n-2}}), t_0)
\\
&\qquad \quad \approx \left(\zeta_{k_{\tau_n}}(\tau_{n-2})\right)^{\frac{\nu_{\tau_n}}{k_{\tau_{n-1}}}} \cR(k_{\tau_{n-1}},c(\ell_{\tau_{n-1}}), t_0)\ ,
\end{align*}
thus $\nu_{\tau_n}$ has upper bound
\begin{align*}
\nu_{\tau_n} \lesssim \frac{k_{\tau_n}\ \ln \theta(k_{\tau_{n-1}},\tau_n,\tau_{n-1},t_0)} {\ln \left(\mu_{k_{\tau_n}}(\tau_{n-1}) \cdot \left(\zeta_{k_{\tau_n}}(\tau_{n-2}) \right)^{-k_{\tau_n}/k_{\tau_{n-1}}}\right)} \lesssim  \frac{k_{\tau_n} (\ln k_{\tau_{n-1}} + (1-2/d)\ell)}{-\ln \left(\mu_{k_{\tau_n}}(\tau_{n-1})\right)} \ .
\end{align*}
For the same reason as in the previous step,
\begin{align}\label{eq:SharpBddAstr}
\max_{\ell_i\le j<k_{\tau_n}} \sup_{t_0<s<\tilde t} \cR(j, c(\ell_{\tau_{n-2}}), s) &\le (1+\tilde \epsilon_{k_{\tau_n}})^{1/k_{\tau_{n-1}}} \cR(k_{\tau_{n-1}},c(\ell_{\tau_{n-2}}), t_0) \
\end{align}
as $\left(\zeta_{k_{\tau_n}}(\tau_{n-2})\right)^{\nu_{\tau_n}}\le 1+\tilde \epsilon_{k_{\tau_n}}$ with some negligible quantity $\tilde \epsilon$, while, for any $1\le p\le n-3$,
\begin{align*}
\max_{\ell_i\le j<k_{\tau_n}} \sup_{t_0<s<\tilde t} \cR(j, c(\ell_{\tau_p}), s) &\le \sup_{t_0<s<\tilde t} \cR(k_{\tau_{p+1}},c(\ell_{\tau_p}), s)\
\end{align*}
as \eqref{eq:LocalPeaksAstr} is preserved until $\tilde t$. Recursion of the above inequalities together with \eqref{eq:SharpBddAstr} leads to, for any $k_{\tau_{\nu}}\le j<k_{\tau_{\nu+1}}$ with $0\le \nu\le n-2$ ($\tau_0:=i$),
\begin{align*}
\sup_{t_0<s<\tilde t} \cR(j, c(\ell_{\tau_\nu}), s) &\le \prod_{\nu+1\le p\le n-2} \left(\frac{c(\ell_{\tau_p})}{c(\ell_{\tau_{p-1}})} \right)^{k_{\tau_p}/(k_{\tau_p}+1)}\!\! \sup_{t_0<s<\tilde t} \cR(k_{\tau_{n-1}}, c(\ell_{\tau_{n-2}}), s)
\\
&\lesssim  \prod_{\nu+1\le p\le n-2} \left(\frac{c(\ell_{\tau_p})}{c(\ell_{\tau_{p-1}})} \right)^{k_{\tau_p}/(k_{\tau_p}+1)} \cR(k_{\tau_{n-1}}, c(\ell_{\tau_{n-2}}), t_0)\ .
\end{align*}
The estimates for $k_{\tau_{n-1}}\le j<k_{\tau_n}$ was already obtained in \eqref{eq:SharpBddAstr}. For the terminal index $k_{\tau_n}$, referring to \eqref{eq:SharpBddAstr} again,
\vspace{-0.1in}
\begin{align*}
&\cR(k_{\tau_n},c(\ell_{\tau_{n-1}}), \tilde t) \le \cR(k_{\tau_{n-1}},c(\ell_{\tau_{n-1}}), \tilde t) \le \left(\frac{c(\ell_{\tau_{n-2}})}{c(\ell_{\tau_{n-1}})}\right)^{\frac{k_{\tau_{n-1}}}{k_{\tau_{n-1}}+1}} \cR(k_{\tau_{n-1}},c(\ell_{\tau_{n-2}}), \tilde t)
\\
&\qquad\qquad\qquad\le \left(c(\ell_{\tau_{n-2}})/c(\ell_{\tau_{n-1}})\right)^{\frac{k_{\tau_{n-1}}}{k_{\tau_{n-1}}+1}} \cR(k_{\tau_{n-1}},c(\ell_{\tau_{n-2}}), \tilde t) \lesssim \cR(k_{\tau_{n-1}},c(\ell_{\tau_{n-1}}), t_0)\ .
\end{align*}

\medskip

Lastly, we establish the lemma if all the sections after $\ell_{i+q}$ are of Type-$\cA$. For simplicity we assume that the indexes $k_i$ for $[\ell_i,\ell_{i+1}]$ in \eqref{eq:NonDescAtLi} are infinitely many, which implies that the indexes $k_\tau$'s as described in \eqref{eq:LocalPeaks} are infinitely many as well and we can pick an infinite sequence $\{k_p\}_{p=i}^\infty$ such that $k_{p+1}-k_p\ge \ell_{p+1}-\ell_p$ (by the settings for $k_\tau$'s).
Without loss of generality, we assume $\{k_p\}$ remains until $\tilde t$ (otherwise we rearrange the indexes $k_p$'s at the temporal point when the order in \eqref{eq:LocalPeaks} stops).
Similarly to the previous step of the proof, by the proof of Theorem~\ref{le:AscendDer} with $\ell=k_{p-q}$ and $k=k_p$, for any $p\ge i+q$,
\begin{align*}
\sup_{t<s<t+T_{k_p}}\!\! \cC(k_p,c(\ell_{k_{p-1}}), \varepsilon, t, s) \le \left(M_{k_p} + \frac{\tilde{c}(k_p)}{c(k_{p-1})}\right)^{\frac{1}{k_p+1}} \cR(k_p,c(k_{p-1}), t),
\end{align*}
and for any $\ell_i\le j< k_{i+q}$ and any $i\le r<i+q$,
\begin{align*}
&\sup_{t<s<t+T_{k_p}}\!\!\! \cR(j,c(k_r), s) \le \left(1+\frac{\tilde{c}(k_p)\cdot (M_{k_p}-1)}{c(k_r) \cdot \theta(j, k_{r+1}, k_r, t)^{j+1}} \right)^{\frac{1}{j+1}} \cR(j,c(k_r), t) \
\end{align*}
where the constant $M_{k_p}<2$.
By Proposition~\ref{prop:HarMaxPrin} (applied to $D^{k_p}u_t$) and the above estimates,
\begin{align*}
\cR(k_p, c(k_{p-1}), t+T_{k_p}) \le \left(\mu_{k_p}(k_{p-1})\right)^{\frac{1}{k_p+1}} \cR(k_p, c(k_{p-1}), t) \ , \qquad \forall\ p\ge i+q \ .
\end{align*}
Recall that it is shown in the previous step that the maximal number of iterations which guarantees
\begin{align*}
\cR(k_p,c(k_{p-1}), t_0+\nu_{k_p}T_{k_p})\le \cR(k_{p-1},c(k_{p-1}), t_0) \
\end{align*}
is $\displaystyle \nu_{k_p} \lesssim - k_p \ln k_{p-1}/\ln \mu_{k_p}$. Notice that $t_0+\nu_{k_p}T_{k_p}\ll t_0+T_{k_{p-1}}$ because
\begin{align*}
\frac{T_{k_{p-1}}}{T_{k_p}} &= \frac{\tilde{c}(k_{p-1})^2}{\tilde{c}(k_p)^2} \cdot \frac{(M_{k_{p-1}}-1)^{-2}}{(M_{k_p}-1)^{-2}} \cdot \frac{\|D^{k_p}u(t)\|^{\frac{2}{k_p+1}}} {\|D^{k_{p-1}}u(t)\|^{\frac{2}{k_{p-1}+1}}}
\\
&\gtrsim \frac{\tilde{c}(k_{p-1})^2\ c(k_{p-1})^{\frac{2k_{p-1}}{k_{p-1}+1}}k_p^2} {\tilde{c}(k_p)^2\ c(k_{p-1})^{\frac{2k_p}{k_p+1}}k_{p-1}^2} \cdot \theta(k_{p-1}, k_p, t_0)^{-2} \gtrsim 2^{k_p-k_{p-1}}\left(\cB_{k_p, k_{p-q}}\right)^{k_p-k_{p-1}} \
\end{align*}
which is essentially greater than $\nu_{k_p}$ for large $k_p$.
Therefore, for each $p>i+q$, the time span $T_{k_{p-1}}$ is sufficient for $\cR(k_p,c(k_{p-1}),t)$ to decrease to a level comparable to $\cR(k_{p-1},c(k_{p-1}), t_0)$, that is
\begin{align*}
\cR(k_p,c(k_{p-1}), t_0+T_{k_{p-1}}) \le \cR(k_{p-1},c(k_{p-1}), t_0) \ .
\end{align*}
A recursion (with index $p$) of this argument shows that, within $[t_0,t_0+T_{k_{i+q}}]$, all $\cR(k_p,c(k_{p-1}), t)$'s with $p>i+q$ decreases to a level comparable to $\cR(k_{i+q},c(k_{i+q}), t_0)$, more precisely,
\begin{align*}
\max_{p>i+q} \cR(k_p,c(k_{p-1}), t_0+T_{k_{i+q}}) \le \cB_{k_p, k_{i+q}} \cdot \cR(k_{i+q},c(k_{i+q}), t_0) \ .
\end{align*}
At the same time, by the estimates in the previous step, we know
\begin{align*}
\max_{\ell_i\le j<k_{i+q}} \sup_{t_0<s<t_0+T_{k_{i+q}}}\!\! \cR(j, c(k_{i+q}), s) &\le (1+\tilde \epsilon_{k_{i+q}})^{1/k_{i+q}} \cdot \cR(k_{i+q}, c(k_{i+q}), t_0) \
\end{align*}
with some negligible quantity $\tilde \epsilon_{k_{i+q}}$. Then, the argument in the previous step (within the string $[\ell_i,\ell_{i+q}]$) leads to \eqref{eq:RestrAtoB}.

\end{proof}

\begin{lemma}\label{le:BstrBdd}
Suppose $\sup_{t>t_0}\|u(t)\|\lesssim (1+\epsilon)^{\ell_i}$ and \eqref{eq:ParaAdjMaxP} is satisfied at any temporal point with $\ell=\ell_i$ and $(k,c(k))=(\ell_p,c(\ell_p))$ for any $i\le p\le i+q$. If a string $[\ell_i,\ell_{i+q}]$ is of Type-$\cB$ at an initial time $t_0$, then for any $i\le r<i+q$,
\begin{align}\label{eq:RestrBtoA}
\max_{\ell_r\le j\le \ell_{i+q}} \sup_{t_0<s<\tilde t} \cR(j, c(\ell_r), s) \le  \max_{r\le p\le i+q} \cR(m_p,c(\ell_p), t_0)
\end{align}
where $\tilde t$ is the first time when $[\ell_i,\ell_{i+q}]$ switches to a Type-$\cA$ string; we set $\tilde t= \infty$ if $[\ell_i,\ell_{i+q}]$ is always of Type-$\cB$.
\end{lemma}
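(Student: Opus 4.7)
My plan is to replace the ascending-chain argument of Lemma~\ref{le:AstrBdd} by a descending-chain application of Theorem~\ref{le:DescendDer} on the Type-$\cB$ section(s) of the string, and to propagate the resulting non-growth to the remaining indices. The key feature distinguishing this from Lemma~\ref{le:AstrBdd} is that Theorem~\ref{le:DescendDer} returns a contraction factor $\mu_*\le 1$ when \eqref{eq:ParaAdjMaxP} is imposed with $\mu\le 1$, so no multiplicative constant analogous to $\Theta(p^*,r)$ enters \eqref{eq:RestrBtoA}; the right-hand side carries the sharp constant $1$.

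First I would locate a Type-$\cB$ section. By the hypothesis that $[\ell_i,\ell_{i+q}]$ is a Type-$\cB$ string throughout $[t_0,\tilde t]$, there exists $p_0\in\{i,\dots,i+q-1\}$ and a corresponding $m_{p_0}\in[\ell_{p_0},\ell_{p_0+1}]$ such that
$$\cR(m_{p_0},c(\ell_{p_0}),s)>\max_{j>m_{p_0}}\cR(j,c(\ell_{p_0}),s),\qquad s\in[t_0,\tilde t],$$
which is exactly the descending order \eqref{eq:DescDerOrd} at level $k=m_{p_0}$ with constant $c(\ell_{p_0})$. Because \eqref{eq:ParaAdjMaxP} is assumed at $(\ell,k,c)=(\ell_i,\ell_p,c(\ell_p))$ for each $i\le p\le i+q$ and its left-hand side is monotone in $k$ (through the exponential factor), the condition transfers to $k=m_{p_0}$ with the same $\mu\le 1$. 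Theorem~\ref{le:DescendDer} then yields $\sup_{t_0<s<\tilde t}\cR(m_{p_0},c(\ell_{p_0}),s)\le\cR(m_{p_0},c(\ell_{p_0}),t_0)$. Every index $j>m_{p_0}$ is then controlled by the Type-$\cB$ order itself. For the indices $\ell_r\le j<m_{p_0}$, if all intervening sub-sections are of Type-$\cB$ I iterate the previous step at each $m_{p'}$; if some sub-sections are Type-$\cA$ I invoke Lemma~\ref{le:AstrBdd} on each maximal Type-$\cA$ sub-string, whose terminal peak lies in an adjoining Type-$\cB$ section and is therefore again controlled by Theorem~\ref{le:DescendDer} at the next $m_{p''}$. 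Finally, since $c(\ell_{p'})\le c(\ell_r)$ whenever $p'\ge r$, one has $\cR(j,c(\ell_r),s)\le\cR(j,c(\ell_{p'}),s)$ at no cost, which lets me translate all intermediate estimates into the uniform constant $c(\ell_r)$ appearing in the statement.

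The main obstacle I foresee is ensuring that Theorem~\ref{le:DescendDer} remains valid over the whole window $[t_0,\tilde t]$, since that theorem is phrased on a specific interval $T_*$ whose length depends on the level $m_{p_0}$. The cutoff $\tilde t$ is defined to be the first moment the string stops being Type-$\cB$, so the descending order persists on $[t_0,\tilde t]$; the `opposite case' argument at the end of the proof of Theorem~\ref{le:DescendDer} (handling the scenario where descending orders switch levels at intermediate times $t_\tau$) then extends the non-growth beyond $T_*$ as long as no Type-$\cA$ ascending chain emerges at the top, which is precisely what the Type-$\cB$ cutoff guarantees. A secondary delicate point is synchronising several Type-$\cB$ levels $m_{p'}$ of different magnitudes active at the same time: the window-length $T_{m_{p'}}$ is longer for smaller $m_{p'}$, so the lower-level windows nest the higher-level ones and the iterated application of the theorem is consistent, with the composite factor still $\le 1$ after at most $q$ applications — exactly what \eqref{eq:RestrBtoA} requires.
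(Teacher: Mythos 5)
Your starting point — applying Theorem~\ref{le:DescendDer} with $\mu_*\le 1$ on the Type-$\cB$ peak to obtain a contraction with no loss — is correct and matches the paper's first step (the bound recorded as \eqref{eq:TypeBTailRestr}). However there are two substantive gaps.

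The decisive problem is your reliance on Lemma~\ref{le:AstrBdd} to control the indices $\ell_r\le j<m_{p_0}$ when intermediate sub-sections happen to be of Type-$\cA$. This fails for two independent reasons. First, the hypotheses do not permit it: Lemma~\ref{le:AstrBdd} assumes the sparseness condition \eqref{eq:kRegScale} for all $k\ge\ell_i$, whereas Lemma~\ref{le:BstrBdd} assumes only \eqref{eq:ParaAdjMaxP} at the levels $(\ell_p,c(\ell_p))$. The whole design of the pair Lemma~\ref{le:AstrBdd}/Lemma~\ref{le:BstrBdd} is that the Type-$\cB$ lemma must hold \emph{without} the sparseness input, since Theorem~\ref{le:DescendDer} does not need it. Second, even granting the hypotheses, Lemma~\ref{le:AstrBdd} returns the extra factor $(1+\tilde\epsilon(\ell_{i+q}))^{1/\ell_{i+q}}\,\Theta(p^*,r)$, and $\Theta$ is built from the combinatorial constants $\cB_{k,i}\le\prod_j c_j^{-1/((j+1)(j+2))}$, which are generically $>1$; but \eqref{eq:RestrBtoA} is stated with constant exactly $1$. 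The paper instead fixes at $t_0$ the descending sequence of peak indices $p_1<p_2<\cdots$ (each $p_j$ the argmax of $\cR(m_\nu,c(\ell_\nu),t_0)$ over $\nu>p_{j-1}$), shows each $[\ell_{p_j},\ell_{p_j+1}]$ is automatically a Type-$\cB$ section at $t_0$ because the dominant peak $m_{p_j}$ dominates the entire tail, controls the peaks via Theorem~\ref{le:DescendDer}, and controls the intermediate indices $p_{j-1}<\nu<p_j$ using Corollary~\ref{cor:ScaleBound} together with the domination inequality $\cR(m_\nu,c(\ell_\nu),t_0)\le\cR(m_{p_j},c(\ell_{p_j}),t_0)$ (this is \eqref{eq:TypeBHeadRestr}). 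That route keeps the constant at $1$ and never touches Lemma~\ref{le:AstrBdd}.

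The second gap is the extension of the estimate from the local window $T_*$ of Theorem~\ref{le:DescendDer} to the whole interval $[t_0,\tilde t]$. You wave at ``the opposite case argument'' in the proof of Theorem~\ref{le:DescendDer}, but the real difficulty is that the descending orders at the various levels $\ell_{p_j}$ can terminate at \emph{different} times $t_1(p_j)$, and the proof must handle both orderings $t_1(p_j)\lessgtr t_1(p_{j+1})$, do a backward recursion up to $t_{\max}^{(1)}=\max_j t_1(p_j)$, and then re-select fresh peak indices $\tilde p_j$ at $t_{\max}^{(1)}$ and iterate until $\tilde t$ while tracking that the controlling quantity $\cR(m_{\tilde p_1},c(\ell_{\tilde p_1}),t_{\max}^{(n)})$ is non-increasing. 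Your proposal asserts consistency of nested windows without carrying out this bookkeeping. Finally, a smaller issue: you claim the exponential factor in \eqref{eq:ParaAdjMaxP} is monotone in $k$, but $(1+\epsilon)^{\ell/k}$ decreases with $k$ while $c^{1/(k+1)}$ (for $c<1$) increases with $k$, so the monotonicity used to transfer from $k=\ell_{p_0}$ to $k=m_{p_0}$ is not automatic and needs its own argument.
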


\begin{proof}
Let $p_1$ be the index for the maximal one in $\{\cR(m_\nu,c(\ell_\nu), t_0)\}_{i\le \nu\le i+q}$, and in general let $p_{j+1}$ be the index for the maximal one in $\{\cR(m_\nu, c(\ell_\nu), t_0)\}_{p_j< \nu\le i+q}$ (pick the minimal $p_j$ if not unique).
In fact, all $[\ell_{p_j},\ell_{p_j+1}]$'s are Type-$\cB$ sections at $t_0$ because for any $\ell_\nu\le r\le \ell_{\nu +1}$ with $\nu>p_j$
\begin{align*}
\cR(r, c(\ell_{p_j}), t_0) < \cR(r, c(\ell_\nu), t_0) \le \cR(m_\nu, c(\ell_\nu), t_0) \le \cR(m_{p_j}, c(\ell_{p_j}), t_0)
\end{align*}
where we used the definition of $m_\nu$ and the fact that $c(\ell_\nu)<c(\ell_{p_j})$.

Let $t_1(r)$ be the first time that $[\ell_r, \ell_{r+1}]$ switches to a Type-$\cA$ section if $[\ell_r, \ell_{r+1}]$ is a Type-$\cB$ section at $t_0$.
Note that, by how $t_1(r)$ is defined, the assumption~\eqref{eq:DescDerOrd} in Theorem~\ref{le:DescendDer} is satisfied for all $s<t_1(p_j)$ with $\mu_*\le 1$ and $k=\ell_{p_j}$, so by the theorem, for any $p_j$,
\begin{align}\label{eq:TypeBTailRestr}
\max_{r\ge \ell_{p_j}} \cR(r,c(\ell_{p_j}),s) \le \cR(m_{p_j},c(\ell_{p_j}),s) \le \cR(m_{p_j},c(\ell_{p_j}),t_0) \ , \qquad \forall\ s\le t_1(p_j)\ .
\end{align}
By how we pick the indexes $p_j$'s,
\begin{align*}
\max_{p_{j-1}<\nu<p_j} \cR(m_\nu, c(\ell_\nu), t_0) \le \cR(m_{p_j},c(\ell_{p_j}),t_0) \
\end{align*}
and by Corollary~\ref{cor:ScaleBound}, for any $p_j$,
\begin{align}\label{eq:TypeBHeadRestr}
\max_{p_{j-1}<\nu<p_j}\! \cR(m_\nu, c(\ell_\nu), s) \le \max_{r\ge \ell_{p_j}} \cR(r,c(\ell_{p_j}),s) \le \cR(m_{p_j}, c(\ell_{p_j}), t_0) \ , \quad \forall\ s\le t_1(p_j) \ .
\end{align}
If $t_1(p_j)<t_1(p_{j+1})$, then the maximal index $k$ for $[\ell_{p_j},\ell_{p_j+1}]$ in \eqref{eq:NonDescAtLi} is contained in $[\ell_{p_j+1},\ell_{p_{j+1}+1}]$ (otherwise $t_1(p_j)>t_1(p_{j+1})$).
Suppose $k\in [\ell_\nu , \ell_{\nu+1}]$ ($\nu\le p_{j+1}$); by how $t_1(p_j)$ is defined and \eqref{eq:TypeBHeadRestr},
\begin{align*}
&\cR(m_{p_j},c(\ell_{p_j}),t_1(p_j)) = \cR(k,c(\ell_{p_j}),t_1(p_j))
\\
&\qquad\qquad\qquad \le \cR(k,c(\ell_\nu),t_1(p_j)) \le  \max_{r\ge \ell_{p_{j+1}}}\!\! \cR(r,c(\ell_{p_{j+1}}),t_1(p_j)) \ .
\end{align*}
Then, by the above result, Corollary~\ref{cor:ScaleBound} and \eqref{eq:TypeBHeadRestr},
\begin{align*}
\max_{p_j\le\nu<p_{j+1}} \cR(m_\nu, c(\ell_\nu), s) \le \max_{r\ge \ell_{p_{j+1}}}\!\! \cR(r,c(\ell_{p_{j+1}}),s) \ , \qquad \forall\ t_1(p_j)\le s\le t_1(p_{j+1}) \ .
\end{align*}
Let $t_{\max}^{(1)}=\max\{t_1(p_j), t_1(p_{j+1})\}$. Collecting \eqref{eq:TypeBTailRestr}, \eqref{eq:TypeBHeadRestr} and the above estimate yields
\begin{align*}
&\max_{\nu \ge p_j} \sup_{t_0<s<t_{\max}^{(1)}} \cR(m_\nu,c(\ell_\nu),s)
\\
&\quad \le \max\left\{\max_{\nu \ge p_{j+1}} \sup_{t_0<s<t_1(p_{j+1})} \! \cR(m_\nu,c(\ell_\nu),s) \ , \ \max_{p_j\le \nu< p_{j+1}} \sup_{t_0<s<t_1(p_{j+1})} \! \cR(m_\nu,c(\ell_\nu),s) \right\}
\\
&\quad \le \max\left\{ \cR(m_{p_{j+1}},c(\ell_{p_{j+1}}),t_0) \ , \ \max_{p_j\le \nu< p_{j+1}} \sup_{t_0<s<t_1(p_j)} \!\! \cR(m_\nu,c(\ell_\nu),s) \ , \right.
\\
&\qquad\qquad\qquad \left. \max_{p_j\le \nu< p_{j+1}} \sup_{t_1(p_j)<s<t_1(p_{j+1})} \!\! \cR(m_\nu,c(\ell_\nu),s) \right\} \le \cR(m_{p_j},c(\ell_{p_j}),t_0) \ .
\end{align*}
If $t_1(p_j)>t_1(p_{j+1})$, then \eqref{eq:TypeBTailRestr} already shows
\begin{align*}
&\max_{\nu \ge p_j} \sup_{t_0<s<t_{\max}^{(1)}}\! \cR(m_\nu,c(\ell_\nu),s) = \max_{\nu \ge p_j} \sup_{t_0<s<t_1(p_j)}\! \cR(m_\nu,c(\ell_\nu),s) \le \cR(m_{p_j},c(\ell_{p_j}),t_0).
\end{align*}
Now let $t_{\max}^{(1)}=\max_{j\ge\alpha} t_1(p_j)$. A recursion (backward) of the above argument leads to
\begin{align*}
&\max_{\nu \ge p_\alpha} \sup_{t_0<s<t_{\max}^{(1)}} \cR(m_\nu,c(\ell_\nu),s) \le \cR(m_{p_\alpha},c(\ell_{p_\alpha}),t_0) \ .
\end{align*}
In particular, if $\alpha=1$, the above result indicates
\begin{align*}
&\max_{\nu \ge p_1} \sup_{t_0<s<t_{\max}^{(1)}} \cR(m_\nu,c(\ell_\nu),s) \le \cR(m_{p_1},c(\ell_{p_1}),t_0) \
\end{align*}
where $t_{\max}^{(1)}=\max_{j\ge1} t_1(p_j)$.
If $t_{\max}^{(1)}=\tilde t$ then the proof is complete. If $t_{\max}^{(1)}<\tilde t$, then we pick new indexes $\tilde{p}_j$'s as shown at the beginning of the proof and repeat the above argument again until some $t_{\max}^{(n)}=\tilde t$, and in each process all $\cR(m_\nu,c(\ell_\nu),s)$'s are restricted by some $\cR\left(m_{\tilde{p}_1},c(\ell_{\tilde{p}_1}),t_{\max}^{(n)}\right)$ which is less than $\cR(m_{p_1},c(\ell_{p_1}),t_0)$.

\end{proof}

%
%

\begin{lemma}\label{le:MaxIndxAStr}
Suppose $\sup_{t>t_0}\|u(t)\|\lesssim (1+\epsilon)^{\ell_i}$ and \eqref{eq:ParaAdjMaxP} is satisfied at any temporal point with $\ell=\ell_i$ and $(k,c(k))=(\ell_p,c(\ell_p))$ for any $i\le p\le i+q$.
If a string $[\ell_i,\ell_{i+q}]$ is of Type-$\cB$ at an initial time $t_0$ and $\tilde t$ is the first time when it switches to a Type-$\cA$ string, then the index $k_p$ described in \eqref{eq:NonDescAtLi} for any $i\le p\le i+q$ has a maximum; more precisely, with the notation in the proof of Lemma~\ref{le:AstrBdd} and $p^*$ being the index for the maximum in $\{\cR(m_p, c(\ell_p), \tilde t)\}_{i\le p\le i+q}$, there exists an index $k_*$ such that
\begin{align*}
\max_{m_{p^*}\le j\le k_*}\theta(j, m_{p^*}, c(\ell_{p^*}), \tilde t) \le 1 \ , \qquad \theta(j, m_{p^*}, c(\ell_{p^*}), \tilde t) <1 \ , \quad \forall j>k_* \ ,
\end{align*}
and at $j=k_*$, in particular, $\theta(k_*, m_{p^*}, c(\ell_{p^*}), \tilde t) =1$. Moreover, $k_*\le \ell_{i+3q}$.
\end{lemma}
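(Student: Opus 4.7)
The plan is three-fold: first establish the existence of $k_*$ and the equality $\theta(k_*,\cdot)=1$ by a continuity argument at the transition time $\tilde t$; next verify the two-sided inequalities characterizing $k_*$ using the extremal choice of $p^*$; and finally bound $k_*\le\ell_{i+3q}$ via the analyticity estimates of Section~\ref{sec:PrfThm}.

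First I would exploit the minimality of $\tilde t$: since the string is Type-$\cB$ on $[t_0,\tilde t)$ and Type-$\cA$ at $\tilde t$, at least one section $[\ell_{p_0},\ell_{p_0+1}]\subset[\ell_i,\ell_{i+q}]$ undergoes the transition, giving an index $k_{p_0}>\ell_{p_0+1}$ for which \eqref{eq:NonDescAtLi} holds with a just-equal crossing
\[
\cR(k_{p_0},c(\ell_{p_0}),\tilde t)=\cR(m_{p_0},c(\ell_{p_0}),\tilde t),
\]
by continuity of $t\mapsto \cR(j,c,t)$ (inherited from the continuity of $t\mapsto\|D^ju(t)\|_\infty$ on $(0,T^*)$ guaranteed by Theorem~\ref{th:LinftyIVP} applied level-by-level). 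Taking $k_*$ as the largest among all such indices $k_p$ over $i\le p\le i+q$, and recalling that $p^*$ maximizes $\cR(m_p,c(\ell_p),\tilde t)$, the identity $\cR(j,c(\ell),\tilde t)=(c(\ell')/c(\ell))^{j/(j+1)}\cR(j,c(\ell'),\tilde t)$ allows one to rebase everything on $c(\ell_{p^*})$ and conclude that $\theta(k_*,m_{p^*},c(\ell_{p^*}),\tilde t)=1$. The inequalities $\theta\le 1$ on $[m_{p^*},k_*]$ and $\theta<1$ beyond $k_*$ then follow by contradiction: a violation at some $j$ would either promote $j$ to the ascending chain (contradicting the maximality of $k_*$) or would have forced an earlier transition of some section to Type-$\cA$ (contradicting the minimality of $\tilde t$).

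The hard part is bounding $k_*\le\ell_{i+3q}$. The key tool is the analyticity of $u(\tilde t)$: by Theorem~\ref{th:LinftyIVP}, the bound $\|u(\tilde t)\|\lesssim(1+\epsilon)^{\ell_i}$ yields an analyticity radius $\gtrsim(1+\epsilon)^{-\ell_i}$, and Cauchy estimates give $\|D^ju(\tilde t)\|\lesssim j!\,(1+\epsilon)^{j\ell_i}$, hence
\[
\cR(j,c(\ell_{p^*}),\tilde t)\lesssim \frac{(1+\epsilon)^{j\ell_i/(j+1)}}{c(\ell_{p^*})^{j/(j+1)}}\ \xrightarrow[j\to\infty]{}\ \frac{(1+\epsilon)^{\ell_i}}{c(\ell_{p^*})}.
\]
A matching lower bound on $\cR(m_{p^*},c(\ell_{p^*}),\tilde t)$ is extracted from Lemma~\ref{le:BstrBdd} (together with the extremality of $p^*$ over $[i,i+q]$), comparable to the seed values $\cR(m_p,c(\ell_p),t_0)$. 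The main obstacle is the calibration of three compounding factors: the shrinkage of $c(\ell_{p^*})^{j/(j+1)}$ as $p^*$ grows, the super-exponential growth of the sequence $\{\ell_j\}$ forced by \eqref{eq:AscDerCond} together with $\ell_{j+1}=\phi(\ell_j)\ge 2\ell_j$, and the rate at which $(1+\epsilon)^{j\ell_i/(j+1)}$ approaches its limit $(1+\epsilon)^{\ell_i}$. The factor of $3$ in $\ell_{i+3q}$ precisely encodes the margin required so that once $j\ge\ell_{i+3q}$, the analyticity bound falls strictly below the lower bound on $\cR(m_{p^*},c(\ell_{p^*}),\tilde t)$, forcing $\theta<1$ and hence $k_*\le\ell_{i+3q}$.
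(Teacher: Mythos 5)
Your plan for the existence of $k_*$ and the equality $\theta(k_*,m_{p^*},c(\ell_{p^*}),\tilde t)=1$ via continuity of $t\mapsto\cR(j,c,t)$ at the Type-$\cB\to$Type-$\cA$ transition is plausible and roughly consistent with what the paper tacitly assumes. The problem is the third step, the bound $k_*\le\ell_{i+3q}$, where your proposed mechanism does not close. You invoke an analyticity radius $\gtrsim(1+\epsilon)^{-\ell_i}$ and Cauchy estimates to get $\cR(j,c(\ell_{p^*}),\tilde t)\lesssim (1+\epsilon)^{j\ell_i/(j+1)}/c(\ell_{p^*})^{j/(j+1)}$ and then say this ``falls strictly below the lower bound on $\cR(m_{p^*},c(\ell_{p^*}),\tilde t)$'' for $j\ge\ell_{i+3q}$. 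But this upper bound is \emph{increasing} in $j$ (since $c(\ell_{p^*})<1$ makes $1/c^{j/(j+1)}$ increasing, and $(1+\epsilon)^{j\ell_i/(j+1)}$ is increasing too), so raising $j$ makes the bound weaker, not stronger. Moreover, exactly the same Cauchy estimate applies to $\cR(m_{p^*},c(\ell_{p^*}),\tilde t)$ itself, so an upper bound on $\cR(j,\cdot)$ can never by itself force $\theta(j,m_{p^*},\cdot)<1$. Finally, you say a ``matching lower bound'' on $\cR(m_{p^*},\cdot)$ comes from Lemma~\ref{le:BstrBdd}; but Lemma~\ref{le:BstrBdd} provides \emph{upper} bounds (the quantities are $\le$ the seed values), not lower bounds, so there is nothing to beat the analyticity upper bound.

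The paper's mechanism for $k_*\le\ell_{i+3q}$ is entirely different and is where all the real content sits. It argues by contradiction: if $k_*>\ell_{i+3q}$ (WLOG $k_*\in[\ell_{i+3q},\ell_{i+3q+1}]$), then $[\ell_{i+q},\ell_{i+2q}]$ and $[\ell_{i+2q},\ell_{i+3q}]$ must both already be Type-$\cA$ strings at $\tilde t$, and by continuity $[\ell_{i+2q},\ell_{i+3q}]$ became Type-$\cA$ at some earlier $t_*<\tilde t$. Applying Lemma~\ref{le:AstrBdd} and iterating Proposition~\ref{prop:HarMaxPrin} on this string (subcase $\hat t<\tilde t$ closes via Lemma~\ref{le:BstrBdd} directly), the critical subcase $\hat t>\tilde t$ produces the inequality \eqref{eq:MaxDecreBstr}, i.e., the iterated harmonic-measure decay forces $\cR(m_{p^*},c(\ell_{i+2q}),\cdot)$ to drop by a definite factor $(\mu_{k_*}(i+2q))^{\tilde\nu/(k_*+1)}$ over a time $\tilde\nu T_{k_*}$. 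The paper then compares this required decay against the \emph{maximal possible} decay of $\|D^{m_{p^*}}u\|$ over the same interval, controlled by $\tilde\nu T_{k_*}\,\sup\|D^{m_{p^*}+1}u\|$, and observes that since $T_{k_*}$ is exponentially small in $k_*$ (scaling like $2^{-2k_*}(\cB_{\ell_{i+3q},\ell_{i+2q}})^{-2k_*}\|D^{k_*}u\|^{-2/(k_*+1)}$), this is quantitatively impossible for any $\tilde\nu$. That rate-of-change comparison is the missing ingredient in your proposal; the factor $3q$ comes from needing two extra Type-$\cA$ strings above $[\ell_i,\ell_{i+q}]$, not from tuning an analyticity margin.
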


\begin{proof}
Suppose there is $k_*> \ell_{i+3q}$ as described above and without loss of generality we assume $k_*\in [\ell_{i+3q},\ell_{i+3q+1}]$. Then $[\ell_{i+q},\ell_{i+2q}]$ and $[\ell_{i+2q},\ell_{i+3q}]$ are both Type-$\cA$ strings at $\tilde t$ and
\begin{align*}
\max_{m_{p^*}\le j < k_*} \cR(j,c(\ell_{p^*}),\tilde t) \le \cR(k_*,c(\ell_{p^*}),\tilde t) \ ;
\end{align*}
thus $\max_{m_{p^*}\le j < k_*} \cR(j,c(\ell_{i+q}),\tilde t) < \cR(k_*,c(\ell_{i+q}),\tilde t)$ as $c(\ell_{i+q})<c(\ell_{p^*})$, which implies
\begin{align*}
\max_{\ell_{i+2q} \le j < k_*} \cR(j, c(\ell_{i+2q}), \tilde t) < \left(\frac{c(\ell_{i+q})}{c(\ell_{i+2q})}\right)^{\frac{1}{k_*+1}-\frac{1}{k_*}} \cR(k_*, c(\ell_{i+2q}), \tilde t) \ .
\end{align*}
Then by the continuity of $D^ju$'s there must exist a temporal point $t_*<\tilde t$ such that
\begin{align*}
\max_{\ell_{i+2q} \le j < k_*} \cR(j, c(\ell_{i+2q}), t_*) \le \left(\frac{c(\ell_{i+q})}{c(\ell_{i+2q})}\right)^{\frac{1}{k_*+1}-\frac{1}{k_*}} \cR(k_*, c(\ell_{i+2q}), t_*) \
\end{align*}
which means $[\ell_{i+2q},\ell_{i+3q}]$ is already a Type-$\cA$ string before $\tilde t$, and by the proof of Lemma~\ref{le:AstrBdd}
\begin{align*}
\max_{\ell_r\le j\le \ell_{i+3q}} \sup_{t_*<s< \hat{t}} \cR(j, c(\ell_r), s) \lesssim \Theta(p^*,r) \max_{i+2q\le p\le i+3q} \cR(m_p,c(\ell_p), t_*) \ ,
\end{align*}
for any $i+2q\le r\le i+3q$, where $\hat{t}$ is the first time $[\ell_{i+2q},\ell_{i+3q}]$ switches to a Type-$\cB$ string, while
\begin{align*}
\cR(k_*, c(\ell_{i+2q}), \hat{t}) \lesssim \max_{i+2q\le p\le i+3q} \cR(m_p,c(\ell_p), t_*) \ .
\end{align*}
If $\hat t <\tilde t$, then by Lemma~\ref{le:BstrBdd} (or Corollary~\ref{cor:ScaleBound})
\begin{align*}
\max_{i\le p\le i+q} \cR(m_p,c(\ell_p), \hat t) \le \max_{i\le p\le i+q} \cR(m_p,c(\ell_p), t_0) \ ,
\end{align*}
and combination of Lemma~\ref{le:AstrBdd} and Lemma~\ref{le:BstrBdd} guarantees
\begin{align*}
\cR(k_*, c(\ell_{i+2q}), s) \lesssim \max_{i+2q\le p\le i+3q} \cR(m_p, c(\ell_{i+2q}), s)  \ , \qquad \forall\ \hat t <s< \tilde t \ .
\end{align*}
In particular, at $\tilde t$, the above restriction contradicts with $\theta(k_*, m_{p^*}, c(\ell_{p^*}), \tilde t) =1$.

If $\hat t >\tilde t$ ($>t_*$), then by Lemma~\ref{le:BstrBdd}
\begin{align*}
\max_{i\le p\le i+q} \cR(m_p,c(\ell_p), \tilde t) \le \max_{i\le p\le i+q} \cR(m_p,c(\ell_p), t_*) \ .
\end{align*}
In general, $\displaystyle\max_{i\le p\le i+q} \cR(m_p,c(\ell_p), s)$ is decreasing within $[t_*, \tilde t]$ as $[\ell_i,\ell_{i+q}]$ is of Type-$\cB$ before $\tilde t$.
Without loss of generality we may assume $k_*$ is invariant in time until $\hat t$; then by the same reasoning as in the proof of Lemma~\ref{le:AstrBdd}, $\cR(k_*, c(\ell_{i+2q}), s)$ is, in general, decreasing (with negligible perturbation) within $[t_*, \hat t]$, more precisely, each time Proposition~\ref{prop:HarMaxPrin} is applied within $[t, t+T_{k_*}]$,
\begin{align*}
\cR(k_*, c(\ell_{i+2q}), t+T_{k_*}) \le \left(\mu_{k_*}(i+2q)\right)^{\frac{1}{k_*+1}} \cR(k_*, c(\ell_{i+2q}), t)
\end{align*}
where $\mu_{k_*}(i+2q)<1$ is a constant defined in the proof of Lemma~\ref{le:AstrBdd}. Assuming $\hat t >\tilde t$, there exists a temporal point $t_*+\tilde \nu T_{k_*}<\tilde t$ such that (for convenience we write $p$ for $p^*$)
\begin{align*}
\cR(m_p, c(\ell_{i+2q}), t_*+\tilde \nu T_{k_*}) &\approx \cR(k_*, c(\ell_{i+2q}), t_*+\tilde \nu T_{k_*}) \ .
\end{align*}
By $\tilde \nu$ times iterations of Proposition~\ref{prop:HarMaxPrin} and the above result,
\begin{align*}
\cR(m_p, c(\ell_{i+2q}), t_*+\tilde \nu T_{k_*}) \le  \left(\mu_{k_*}(i+2q)\right)^{\frac{\tilde \nu}{k_*+1}} \cR(k_*, c(\ell_{i+2q}), t_*) \ .
\end{align*}
Recall that $\cR(k_*, c(\ell_{p^*}), t_*) \le \cR(m_p, c(\ell_{p^*}), t_*)$ and $c(\ell_{i+2q})<c(\ell_{p^*})$, so
\begin{align}\label{eq:MaxDecreBstr}
\cR(m_p, c(\ell_{i+2q}), t_*+\tilde \nu T_{k_*}) \le \left(\mu_{k_*}(i+2q)\right)^{\frac{\tilde \nu}{k_*+1}} \cR(m_p, c(\ell_{i+2q}), t_*) \ .
\end{align}
On the other hand,
\begin{align*}
\|D^{m_p}u(t_*)\| - &\|D^{m_p}u(t_*+\tilde \nu T_{k_*})\| \le \tilde \nu T_{k_*} \sup_{t_*<s<t_*+\tilde \nu T_{k_*}} \|D^{m_p+1}u(s)\|
\\
& \le \tilde \nu T_{k_*} \sup_{t_*<s<t_*+\tilde \nu T_{k_*}} \left(c(\ell_{p})^{\frac{\ell_{p}}{\ell_{p}+1}} (\ell_{p}!)^{\frac{1}{\ell_{p}+1}} \cR\left(m_p+1, c(\ell_{p}), s\right) \right)^{m_p+2}
\\
& \qquad \le \tilde \nu T_{k_*} \left(c(\ell_{p})^{\frac{\ell_{p}}{\ell_{p}+1}} (\ell_{p}!)^{\frac{1}{\ell_{p}+1}} \cR\left(m_p, c(\ell_{p}), t_*\right) \right)^{m_p+2}
\end{align*}
which implies
\begin{align*}
1- \left[\frac{\cR(m_p, c(\ell_p), t_*+\tilde \nu T_{k_*})}{\cR(m_p, c(\ell_p), t_*)}\right]^{m_p+1} \le \tilde \nu T_{k_*} \cdot c(\ell_{p})^{\frac{\ell_{p}}{\ell_{p}+1}} (\ell_{p}!)^{\frac{1}{\ell_{p}+1}} \cR\left(m_p, c(\ell_{p}), t_*\right) \ .
\end{align*}
Recall that $T_{k_*}\lesssim 2^{-2k_*}\left(\cB_{\ell_{i+3q}, \ell_{i+2q}}\right)^{-2k_*} \|D^{k_*}u(t)\|^{-\frac{2}{k_*+1}}$, and the above result contradicts with \eqref{eq:MaxDecreBstr} for any $\tilde \nu$-value.

\end{proof}

\begin{proof}[Proof of Theorem~\ref{th:MainResult}]
The proof is organized as follows. As have been shown in the above three lemmas, a string gets stabilized either by the assumption~\eqref{eq:kRegScale} starting from a Type-$\cA$ string or by Theorem~\ref{le:DescendDer} (and Corollary~\ref{cor:ScaleBound}) from a Type-$\cB$ string. In the following we will prove that on one hand all the higher order derivatives remain within certain ranges up to $T^*$ as a result of the dynamical restriction of a single type or mixing types of strings by Theorem~\ref{le:AscendDer} with the assumption~\eqref{eq:kRegScale}; on the other hand the lower order derivatives are restricted by Corollary~\ref{cor:ScaleBound} and interpolation (Lemma~\ref{le:GNIneq}), thus establishing the solution is regular on $[t_0,T^*]$.

\medskip

Define $\hat{\ell}_i:=\ell_{iq}$ and $\hat{m}_i:=m_{\hat{p}_i}$ where $\hat{p}_i$ is the minimal index within $\{iq, \cdots, (i+1)q\}$ such that
\begin{align*}
\cR\left(m_{\hat{p}_i},c(\ell_{\hat{p}_i}), t\right) = \max_{iq\le p\le (i+1)q} \cR(m_p,c(\ell_p), t) \ .
\end{align*}
Note that $\hat{p}_i(t)$ and $\hat{m}_i(t)$ may be variant in time, and we will always assume $\hat{p}_i$ and $\hat{m}_i$ correspond to the temporal point $t$ in $\cR\left(\cdot,\cdot, t\right)$ if there is no ambiguity.
Let $\hat{t}_1(i)$ be the first time when $[\hat{\ell}_i, \hat{\ell}_{i+1}]$ switches to a Type-$\cA$ string if it is of Type-$\cB$ at $t_0$ (in particular, $\hat{t}_1(i)=t_0$ if $[\hat{\ell}_i, \hat{\ell}_{i+1}]$ is of Type-$\cA$ at $t_0$) and let $\tilde{t}_1(i)$ be the first time when $[\hat{\ell}_i, \hat{\ell}_{i+1}]$ switches to a Type-$\cB$ string after $\hat{t}_1(i)$.
Inductively, we let $\hat{t}_n(i)$ (resp. $\tilde{t}_n(i)$) be the first time when $[\hat{\ell}_i, \hat{\ell}_{i+1}]$ switches to a Type-$\cA$ (resp. Type-$\cB$) string after $\tilde{t}_{n-1}(i)$ (resp. after $\hat{t}_n(i)$).



We will verify in the proof step by step that Lemma~\ref{le:AstrBdd} and Lemma~\ref{le:BstrBdd} are applicable for all $i$ by showing $\sup_{t_0<t<T^*}\|u(t)\|\lesssim (1+\epsilon)^{\ell}$.
With this and the assumption~\eqref{eq:kRegScale} for all $k\ge \ell_0$, in particular, for $i=0$, the proofs of Lemma~\ref{le:BstrBdd} and Lemma~\ref{le:AstrBdd} indicate that, for any $0\le r\le q$,
\begin{align}
\max_{\ell_r\le j\le \ell_q} \sup_{t_0<s<\hat{t}_1(0)} \cR(j, c(\ell_r), s) &\le  \max_{r\le p\le q} \cR(m_p, c(\ell_p), t_0) \ , \notag
\\
\max_{\ell_r\le j\le \ell_q} \sup_{\hat{t}_1(0)<s<\tilde{t}_1(0)} \cR(j, c(\ell_r), s) &\le (1+\tilde{\epsilon}_{\ell_q})^{1/\ell_q} \Theta(\hat{p}_0,r) \cdot \cR(\hat{m}_0, c(\ell_{\hat{p}_0}), \hat{t}_1(0)) \ , \label{eq:StrN0B1}
\end{align}
where $\Theta(\hat{p}_0,r)$ is a constant given by Lemma~\ref{le:AstrBdd}. (Note that the first estimate above can be trivial in a sense that $\hat{t}_1(i)=t_0$.) Application of the above results at $\hat{t}_1(0)$ yields, for any $0\le r\le q$,
\begin{align*}
\max_{\ell_r\le j\le \ell_q} \sup_{t_0<s<\tilde{t}_1(0)} \cR(j, c(\ell_r), s) &\le (1+\tilde{\epsilon}_{\ell_q})^{1/\ell_q} \Theta(\hat{p}_0,r) \cdot \cR(\hat{m}_0, c(\ell_{\hat{p}_0}), t_0) \ .
\end{align*}
In particular, $\Theta(\hat{p}_0,0)\le (1+\tilde{\epsilon}_{\ell_q})^{1/\ell_q} \cB_{\ell_{\hat{p}_0}, \ell}\cdot c(\ell_{\hat{p}_0})/c(\ell)$ where $\tilde \epsilon_{\ell_q}$ is a small quantity given explicitly in the proof of Lemma~\ref{le:AstrBdd}.
By Lemma~\ref{le:GNIneq} and the above result, for any $t_0<s< \tilde{t}_1(0)$,
\begin{align*}
\|u\left(s\right)\| &\lesssim \|u_0\|_2 \|D^{\ell}u\left(s\right)\|^{\frac{d/2}{\ell+d/2}} \notag
\\
&\lesssim \|u_0\|_2 \left(c(\ell)^{\frac{\ell}{\ell+1}} (\ell!)^{\frac{1}{\ell+1}} \cR\left(\ell, c(\ell), s\right) \right)^{\frac{(d/2)(\ell+1)}{\ell+d/2}}  \notag
\\
&\lesssim \|u_0\|_2 \left(c(\ell)^{\frac{\ell}{\ell+1}} (\ell!)^{\frac{1}{\ell+1}} \cB_{\ell_{\hat{p}_0}, \ell} \cdot \cR\left(\hat{m}_0, c(\ell), t_0\right)\right)^{\frac{(d/2)(\ell+1)}{\ell+d/2}} \ .  
\end{align*}
At the same time, by Theorem~\ref{th:LinftyIVP} and the assumption for $\|u_0\|$ imposed by Theorem~\ref{th:MainResult}, we may assume without loss of generality that
\begin{align*}
\|D^{\hat{m}_0}u_0\|^{\frac{1}{\hat{m}_0+1}} \lesssim (\hat{m}_0!)^{\frac{1}{\hat{m}_0+1}} \|u_0\| \lesssim (\hat{m}_0!)^{\frac{1}{\hat{m}_0+1}} (1+\epsilon)^{(2/d)\ell} \ .
\end{align*}
Combining the above results yields
\begin{align*}
\sup_{t_0 <s< \tilde{t}_1(0)} \|u(s)\| &\lesssim \|u_0\|_2 \left(c(\ell)^{\frac{1}{\hat{m}_0+1} - \frac{1}{\ell+1}} (\ell!)^{\frac{1}{\ell+1}} \cB_{\ell_{\hat{p}_0}, \ell} \cdot (1+\epsilon)^{(2/d)\ell} \right)^{d/2}
\\
&\lesssim_{\|u_0\|_2} \left( (\hat{m}_0/\ell)^{\ln(2e/\eta)} (1+\epsilon)^{(2/d)\ell}\right)^{d/2} \lesssim_{\|u_0\|_2} (\ell_q/\ell)^{\frac{d}{2}\ln\left(\frac{2e}{\eta}\right)} (1+\epsilon)^{\ell} \ ,
\end{align*}
which justifies the size assumption in the two lemmas for $t< \tilde{t}_1(0)$.
With in mind that $[\hat{\ell}_0, \hat{\ell}_1]$ is of Type-$\cB$ at $\tilde{t}_1(0)$, the particular restriction of \eqref{eq:StrN0B1} at $\tilde{t}_1(0)$ together with Lemma~\ref{le:BstrBdd} (starting at $\tilde{t}_1(0)$) indicates that, for any $0\le r\le q$,
\begin{align*}
\max_{\ell_r\le j\le \ell_q} \sup_{\tilde{t}_1(0)<s<\hat{t}_2(0)} \cR(j, c(\ell_r), s) &\le (1+\tilde{\epsilon}_{\ell_q})^{1/\ell_q} \Theta(\hat{p}_0,r) \cdot \cR(\hat{m}_0, c(\ell_{\hat{p}_0}), \hat{t}_1(0))
\\
&\le (1+\tilde{\epsilon}_{\ell_q})^{1/\ell_q} \Theta(\hat{p}_0,r) \cdot \cR(\hat{m}_0, c(\ell_{\hat{p}_0}), t_0) \ .
\end{align*}
And the same argument as above leads to
\begin{align*}
\sup_{t_0 <s< \hat{t}_2(0)} \|u(s)\| &\le (1+\tilde{\epsilon}_{\ell_q})^{1/\ell_q} \|u_0\|_2 (\ell_q/\ell)^{\frac{d}{2}\ln\left(\frac{2e}{\eta}\right)} (1+\epsilon)^{\ell} \ ,
\end{align*}
which justifies the size assumption in the two lemmas up to $t< \hat{t}_2(0)$.

In the following, we provide a precise measurement for the slight increment of $\cR(\hat{m}_0, c(\ell_{\hat{p}_0}), s)$ within each time interval $[t,t+T_{k_q}]$ that Proposition~\ref{prop:HarMaxPrin} is applied to; the reason is twofold -- on one hand, although the increment of $\cR(\hat{m}_0, c(\ell_{\hat{p}_0}), s)$ is negligible in a short period, the accumulative effect can be significant in a relatively long time interval since the time span $T_{k_q}$ for each intermittency argument is very small compared to $T^*$ and large increment of $\cR(\hat{m}_0, c(\ell_{\hat{p}_0}), s)$ may cause the increment of $\|u(s)\|$ which would not guarantee the size condition for applying the two lemmas, in which case, it is necessary to transfer the above argument from $[\hat{\ell}_0, \hat{\ell}_1]$ to $[\hat{\ell}_i, \hat{\ell}_{i+1}]$ with larger index $i$ so that the two lemmas can be applied at $[\hat{\ell}_i, \hat{\ell}_{i+1}]$.
On the other hand, the increment of $\cR(\hat{m}_0, c(\ell_{\hat{p}_0}), s)$ will reduce the time span $T_{k_q}$ for the intermittency argument and this, in turn, makes $\cR(\hat{m}_0, c(\ell_{\hat{p}_0}), s)$ get multiplied faster when $s$ approaches  $T^*$.
Our purpose is to quantify such increment and explore how it affects the size of $T_{k_q}$ in order to decide whether a finite repetition of the above argument can lead to the regularity up to $T^*$.


We continue the previous argument at $\hat{t}_2(0)$. Again, by Lemma~\ref{le:AstrBdd}, for any $0\le r\le q$,
\begin{align*}
\max_{\ell_r\le j\le \ell_q} \sup_{\hat{t}_2(0) <s< \hat{t}_2(0) + T_{k_q}}\!\! \cR(j, c(\ell_r), s) \le (1+\tilde{\epsilon}_{\ell_q})^{1/\ell_q} \Theta(\hat{p}_0,r) \cdot \cR(\hat{m}_0, c(\ell_{\hat{p}_0}), \hat{t}_2(0)) \ ,
\end{align*}
where $T_{k_q}\approx 2^{-2k_q}\left(\cB_{\ell_q, \ell}\right)^{-2k_q} \|D^{k_q}u(\hat{t}_2(0))\|^{-\frac{2}{k_q+1}}$, and by Lemma~\ref{le:MaxIndxAStr} we know $k_q\le \ell_{3q}$; thus
\begin{align*}
T_{k_q} \gtrsim 2^{-2\ell_{3q}}\left(\cB_{\ell_q, \ell}\right)^{-2\ell_{3q}} \|D^{\ell_{3q}}u(\hat{t}_2(0))\|^{-\frac{2}{\ell_{3q}+1}} \ .
\end{align*}
Without loss of generality, we assume that $\hat{p}_0$ is invariant in time and that $\tilde t_2(0) \in [\hat{t}_2(0), \hat{t}_2(0) + T_{k_q}]$. In general, we assume $\tilde t_n(0) \in [\hat{t}_n(0), \hat{t}_n(0) + T_{k_q}]$, and by Lemma~\ref{le:AstrBdd} and Proposition~\ref{prop:HarMaxPrin}
\begin{align}
\sup_{\hat{t}_n(0) <s< \hat{t}_n(0) + T_{k_q}}\! \cR(\hat{m}_0, c(\ell_{\hat{p}_0}), s) &\le (1+\tilde{\epsilon}_{\ell_q})^{1/\ell_q} \cdot \cR(\hat{m}_0, c(\ell_{\hat{p}_0}), \hat{t}_n(0)) \ , \label{eq:m0IncreN}
\\
\cR(k_q, c(\ell_{\hat{p}_0}), \hat{t}_n(0) + T_{k_q}) &\le \left(\mu_{k_q}(\hat{p}_0)\right)^{\frac{1}{k_q+1}} \cdot \cR(k_q, c(\ell_{\hat{p}_0}), \hat{t}_n(0)) \ . \label{eq:kqDecreN}
\end{align}
We claim that with the above settings, one of the followings occurs:

(I) $\cR\left(\hat{m}_0, c(\ell_{\hat{p}_0}), \hat{t}_{n+1}(0)\right)\le \cR\left(\hat{m}_0, c(\ell_{\hat{p}_0}), \hat{t}_n(0)\right)$;

\vspace{0.04in}

(II) $\hat{t}_{n+1}(0) - \tilde t_n(0)\ge C\left(M_{k_q}\right) 2^{-2k_q} \|D^{k_q}u\left(\hat{t}_n(0) + T_{k_q}\right)\|^{-\frac{d}{k_q+d/2}}$ with $M_{k_q}=\left(\mu_{k_q}(\hat{p}_0)\right)^{-1}$.

\medskip

\noindent\textit{Proof of the claim:} Assume the opposite of (I), i.e. $\cR\left(\hat{m}_0, c(\ell_{\hat{p}_0}), \hat{t}_{n+1}(0)\right)\ge \cR\left(\hat{m}_0, c(\ell_{\hat{p}_0}), \hat{t}_n(0)\right)$. Without loss of generality we assume that $k_q$ is invariant with $\hat{t}_n(0)$, and that $\hat t_{n+1}(0)>\hat{t}_n(0) + T_{k_q}$.
With in mind that $[\hat{\ell}_0, \hat{\ell}_1]$ is of Type-$\cA$ at $\hat{t}_n(0)$ and at $\hat{t}_{n+1}(0)$, the opposite of (I) indicates that
\begin{align*}
&\cR\left(k_q, c(\ell_{\hat{p}_0}), \hat{t}_{n+1}(0)\right) = \cR\left(\hat{m}_0, c(\ell_{\hat{p}_0}), \hat{t}_{n+1}(0)\right)
\\
&\qquad\qquad\qquad \ge \cR\left(\hat{m}_0, c(\ell_{\hat{p}_0}), \hat{t}_n(0)\right) = \cR\left(k_q, c(\ell_{\hat{p}_0}), \hat{t}_n(0)\right)
\end{align*}
which, combined with \eqref{eq:kqDecreN}, yields
\begin{align*}
\cR\left(k_q, c(\ell_{\hat{p}_0}), \hat{t}_{n+1}(0)\right) \ge \left(\mu_{k_q}(\hat{p}_0)\right)^{-\frac{1}{k_q+1}} \cdot \cR(k_q, c(\ell_{\hat{p}_0}), \hat{t}_n(0) + T_{k_q}) \ ,
\end{align*}
in other words, $\|D^{k_q}u\left(\hat{t}_{n+1}(0)\right)\| \ge \left(\mu_{k_q}(\hat{p}_0)\right)^{-1} \|D^{k_q}u\left(\hat{t}_n(0) + T_{k_q}\right)\|$.
By Theorem~\ref{th:MainThmVel} (applied in a contrapositive form), the time span required for $D^{k_q}u$ to increase by $M_{k_q}=\left(\mu_{k_q}(\hat{p}_0)\right)^{-1}$ is at least (with in mind that $\hat t_{n+1}(0)>\hat{t}_n(0) + T_{k_q}$),
\begin{align*}
T_{k_q}^* := C\left(M_{k_q}\right) 2^{-2k_q} \|D^{k_q}u\left(\hat{t}_n(0) + T_{k_q}\right)\|^{-\frac{d}{k_q+d/2}} \ .
\end{align*}
Recall that $\tilde t_n(0)<\hat{t}_n(0) + T_{k_q}$, so
\begin{align*}
\hat{t}_{n+1}(0) - \tilde t_n(0) \ge T_{k_q}^* := C\left(M_{k_q}\right) 2^{-2k_q} \|D^{k_q}u\left(\hat{t}_n(0) + T_{k_q}\right)\|^{-\frac{d}{k_q+d/2}} \ .
\end{align*}
This ends the proof of the claim.
Moreover, by Lemma~\ref{le:MaxIndxAStr} we know $k_q\le \ell_{3q}$ and
\begin{align*}
\hat{t}_{n+1}(0) - \tilde t_n(0) \ge  T_{k_q}^* \ge C\left(M_{k_q}\right) 2^{-2\ell_{3q}} \|D^{\ell_{3q}}u\left(\hat{t}_n(0) + T_{k_q}\right)\|^{-\frac{d}{\ell_{3q}+d/2}} \ .
\end{align*}
The above claim together with multiple iterations of \eqref{eq:m0IncreN} lead to
\begin{align*}
\cR(\hat{m}_0, c(\ell_{\hat{p}_0}), \hat{t}_{n+1}(0)) &\le (1+\tilde{\epsilon}_{\ell_q})^{\nu/\ell_q} \cdot \cR(\hat{m}_0, c(\ell_{\hat{p}_0}), \hat{t}_2(0))
\end{align*}
where $\nu$ is the total number of times that Case~(II) in the claim occurs within $[\hat{t}_2(0), \hat{t}_{n+1}(0)]$. The worst scenario is $\nu =n$, that is, Case~(II) in the claim occurs throughout $[\hat{t}_2(0), \hat{t}_{n+1}(0)]$, in which case, the above restriction, together with Lemma~\ref{le:AstrBdd} and Lemma~\ref{le:BstrBdd} (applied $n$ times), indicates that, for any $0\le r\le q$,
\begin{align*}
\max_{\ell_r\le j\le \ell_q} \sup_{t_0 <s< \hat{t}_{n+1}(0)}\!\! \cR(j, c(\ell_r), s) \le (1+\tilde{\epsilon}_{\ell_q})^{n/\ell_q} \Theta(\hat{p}_0,r) \cdot \cR(\hat{m}_0, c(\ell_{\hat{p}_0}), t_0) \ .
\end{align*}
Recall that the precise upper bound for $1+\tilde{\epsilon}_{\ell_q}$ was given in the proof of Lemma~\ref{le:AstrBdd}:
\begin{align*}
1+ \tilde{\epsilon}_{\ell_q} &\lesssim \zeta_{\ell_q}(\ell_{q-1}) \lesssim 1 + (M_{\ell_q}-1) \cdot \tilde{c}(\ell_q)/c(\ell_{q-1}) \lesssim 1 + 2^{-\ell_{q}}\left(\cB_{\ell_q, \ell}\right)^{-\ell_{q}} /c(\ell_{q-1}) \ .
\end{align*}

In the rest of the proof, we show that the above iterations of Lemma~\ref{le:AstrBdd} and Lemma~\ref{le:BstrBdd} repeat for finitely many times as $\hat{t}_n(0)$ is approaching $T^*$ by revealing that the time span $\hat{t}_{n+1}(0) - \hat{t}_n(0)$ (or $\hat{t}_{n+1}(i_*) - \hat{t}_n(i_*)$ for some index $i_*$) for each application of Lemma~\ref{le:AstrBdd} and Lemma~\ref{le:BstrBdd} remains greater than a fixed number.
Note that the above argument guarantees that at least for small values of $n$ this is the case:
\begin{align*}
\hat{t}_{n+1}(0) - \hat{t}_n(0) \ge T_{k_q} + T_{k_q}^* \ge 2^{-2\ell_{3q}} \|D^{\ell_{3q}}u\left(\hat{t}_n(0) + T_{k_q}\right)\|^{-\frac{d}{\ell_{3q}+d/2}} \ .
\end{align*}
Assuming this would continue as $\hat{t}_n(0)$ goes towards $T^*$, the maximal number of iterations until $\hat{t}_n(0)$ reaches $T^*$ is
\begin{align*}
n^*:= & T^* / (\hat{t}_{n+1}(0) - \hat{t}_n(0)) \le T^* \cdot 2^{2\ell_{3q}} \|D^{\ell_{3q}}u\left(\hat{t}_n(0) + T_{k_q}\right)\|^{\frac{d}{\ell_{3q}+d/2}}
\\
&\qquad \le T^* \cdot 2^{2\ell_{3q}} \|D^{\ell_{3q}}u\left(\hat{t}_1(0)\right)\|^{\frac{d}{\ell_{3q}+d/2}} \le T^* \cdot 2^{2\ell_{3q}} \cdot \ell_{3q} \cdot \|u_0\|^d
\end{align*}
while $\cR(\hat{m}_0, c(\ell_{\hat{p}_0}), s)$ increases at most by
\begin{align*}
(1+\tilde{\epsilon}_{\ell_q})^{n^*/\ell_q} &\le \left(1 + 2^{-\ell_{q}}\left(\cB_{\ell_q, \ell}\right)^{-\ell_{q}} /c(\ell_{q-1})\right)^{T^* \cdot 2^{2\ell_{3q}} \cdot \ell_{3q} \cdot (1+\epsilon)^{2\ell} /\ell_q}
\\
&\lesssim \exp\left( T^* \cdot 2^{2\ell_{3q}-\ell_{q}} \cdot (\ell_{3q}/ \ell_q) \cdot c(\ell_{q-1}) \cdot (1+\epsilon)^{2\ell}  \left(\cB_{\ell_q, \ell}\right)^{-\ell_{q}} \right)=: C^* ,
\end{align*}
provided $T^*\ll \left(\cB_{\ell_q, \ell}\right)^{\ell_{q}}$. Then, similarly to the estimates for $\|u(s)\|$ within $[t_0, \tilde{t}_1(0)]$,
\begin{align*}
\sup_{t_0 <s< \hat{t}_{n^*}(0)} \|u(s)\| &\lesssim \|u_0\|_2 \sup_{t_0 <s< \hat{t}_{n^*}(0)} \left(c(\ell)^{\frac{\ell}{\ell+1}} (\ell!)^{\frac{1}{\ell+1}} \cB_{\ell_{\hat{p}_0}, \ell} \cdot \cR\left(\hat{m}_0, c(\ell), s\right)\right)^{\frac{(d/2)(\ell+1)}{\ell+d/2}}
\\
&\lesssim \|u_0\|_2 \left(c(\ell_{\hat{p}_0}) \cdot \ell_{\hat{p}_0} \cdot \cB_{\ell_{\hat{p}_0}, \ell} \cdot (1+\tilde{\epsilon}_{\ell_q})^{n^*/\ell_q} \cR\left(\hat{m}_0, c(\ell_{\hat{p}_0}), t_0\right)\right)^{\frac{(d/2)(\ell+1)}{\ell+d/2}} \ ;
\end{align*}
thus
\vspace{-0.1in}
\begin{align*}
\sup_{t_0 <s< \hat{t}_{n^*}(0)} \|u(s)\| \lesssim (C^*)^{\frac{d}{2}} \|u_0\|_2 (\ell_q/\ell)^{\frac{d}{2}\ln\left(\frac{2e}{\eta}\right)} (1+\epsilon)^{\ell} \ .
\end{align*}
As $C^*\approx 1$, this justifies the condition $\sup_{t_0 <s< T^*} \|u(s)\| \lesssim (1+\epsilon)^{\ell}$, Lemma~\ref{le:AstrBdd} and Lemma~\ref{le:BstrBdd} are applicable, and the process described above may continue until $T^*$.

If $T^*\gtrsim \left(\cB_{\ell_q, \ell}\right)^{\ell_{q}}$, we separate $[t_0, T^*]$ at some $\cT_1\ll \left(\cB_{\ell_q, \ell}\right)^{\ell_{q}}$ such that the condition for Lemma~\ref{le:AstrBdd} and Lemma~\ref{le:BstrBdd} is satisfied within $[t_0, \cT_1]$, and regularity of the solution holds up to $\cT_1$.
Then, similarly, we separate $[\cT_1 , T^*]$ at some $\cT_2\ll \left(\cB_{\ell_{2q}, \ell_q}\right)^{\ell_{2q}}$ such that
\begin{align*}
\sup_{\cT_1 <s< \cT_2} \|u(s)\| \lesssim (1+\epsilon)^{\ell_q}
\end{align*}
which justifies the size condition in Lemma~\ref{le:AstrBdd} and Lemma~\ref{le:BstrBdd} applied to the string $[\hat{\ell}_1, \hat{\ell}_2]$, and regularity remains until $\cT_2$. Inductively, we divide $[\cT_i , T^*]$ at some $\cT_{i+1}\ll \left(\cB_{\hat{\ell}_{i+1}, \hat{\ell}_i}\right)^{\hat{\ell}_{i+1}}$ so that
\begin{align*}
\sup_{\cT_i <s< \cT_{i+1}} \|u(s)\| \lesssim (1+\epsilon)^{\hat{\ell}_i}
\end{align*}
and Lemma~\ref{le:AstrBdd} and Lemma~\ref{le:BstrBdd} are applicable to the string $[\hat{\ell}_i, \hat{\ell}_{i+1}]$ up to $\cT_{i+1}$. This dividing process stops at some index $i_*$ such that $T^*\ll \left(\cB_{\hat{\ell}_{i_*+1}, \hat{\ell}_{i_*}}\right)^{\hat{\ell}_{i_*+1}}$ and regularity remains until $T^*$ with $\|u(T^*)\|\lesssim (1+\epsilon)^{\hat{\ell}_{i_*}}$. In particular, $T^*$ is not a blow-up time.

\medskip

The proof for the vorticity is similar.

\end{proof}

\section{Conclusion}

The main goal of this paper was to demonstrate asymptotically critical nature of the NS regularity problem
within the framework of sparseness of the super-level sets of the higher-order derivatives of the
velocity field. 
The principal mechanism behind the proof is weakening the nonlinear effect at high (differential) levels
through the interplay between the spatial intermittency (utilized via the harmonic measure majorization
principle) and the local-in-time monotonicity properties of chains of derivatives (ascending \emph{vs.}
descending). Since the role of the ascending property is replacing the classical Gagliardo-Nirenberg
interpolation inequalities, this process can be thought of as `dynamic interpolation'.

\medskip

The follow up work includes repurposing and refining the techniques presented here to obtain stronger 
manifestations of criticality -- and in particular -- criticality with respect to 
the strength of diffusion in the context
of the 3D hyper-dissipative (HD) NS system. In a work \citet{Grujic2020},
the authors presented a mathematical evidence of \emph{criticality} of the \emph{Laplacian}.
More precisely, it was demonstrated that -- as soon as the hyper-diffusion exponent is greater than 1 and 
the flow is in a suitably defined `turbulent scenario' -- the 3D HD NS 
system does not allow spontaneous formation of singularities. To illustrate the impact of the result in a 
methodology free setting, the authors considered a two-parameter family of the rescaled blow-up
profiles (c.f. \citet{AlBr2022} where the ansatz was used to point out that in the Navier-Stokes case
this type of analysis does not rule out new scaling exponents), and showed that as soon as the hyper-diffusion exponent is 
greater than 1 a new region in the parameter 
space is ruled out. More importantly, the region is a neighborhood of the self-similar profile, i.e., the approximately self-similar 
blow-up -- a prime candidate for the singularity formation -- is ruled out
(\citet{Grujic2020}) for all HD NS models.

\bigskip

\bigskip

\centerline{\textbf{Acknowledgments}}

\medskip

The work of of Z.G. is supported in part by the National Science Foundation grant DMS--2009607,
``Toward criticality of the Navier-Stokes regularity problem''.

\bigskip

\bigskip

\noindent \textbf{COI Statement:} On behalf of all authors, the corresponding author states that there is no conflict of interest.

\bigskip

\bigskip

\noindent \textbf{Data Availability Statement:} This manuscript has no associated data.

\bigskip

\bigskip

\bibliographystyle{abbrvnat}
\bibliographystyle{plainnat}

\def\cprime{$'$}

\end{document}